\def\bisect{{\tt bisect}}
\def\k{\underline k}
\def\uni#1{{\rm uni}(#1)}
\def\ex{{\rm ext}}
\def\act{{\rm act}}
\def\coarse{\bullet}
\def\fine{\circ}
\def\qson{q_{\rm son}}
\def\kproj{k_{\proj}}
\def\kloc{k_{\rm loc}}
\def\kapp{k_{\rm app}}
\def\Crel{C_{\rm rel}}
\def\Clin{C_{\rm lin}}
\def\qlin{q_{\rm lin}}
\def\copt{c_{\rm opt}}
\def\Copt{C_{\rm opt}}
\def\Cmin{C_{\rm min}}
\def\Cref{C_{\rm ref}}
\def\opt{{\rm opt}}
\def\Ceff{C_{\rm eff}}%
\def\Crel{C_{\rm rel}}%
\def\osc{{\rm osc}}%
\def\MM{\mathcal M}
\def\UU{\mathcal{U}}
\def\T{\mathbb{T}}
\def\R{{\mathbb R}}
\def\N{{\mathbb N}}
\def\KK{{\mathcal K}}
\def\NN{{\mathcal N}}
\def\OO{{\mathcal O}}
\def\TT{{\mathcal T}}
\def\XX{{\mathcal X}}
\def\YY{{\mathcal Y}}
\def\diam{{\rm diam}}
\def\C#1{C_{\rm #1}}
\def\edual#1#2{\langle#1\,,\,#2\rangle_{\LL}}
\def\norm#1#2{\|#1\|_{#2}}
\def\set#1#2{\big\{#1\,:\,#2\big\}}
\def\dual#1#2{\langle#1\,,\,#2\rangle}
\def\level{{\rm level}}
\def\LL{\mathcal{L}}
\def\div{{\rm div}}
\def\AA{\textit{\textbf{A}}}
\def\ff{\boldsymbol f}
\def\bb{\textit{\textbf b}}
\def\refine{{\tt ref}}
\def\refine{{\tt refine}}
\def\MM{\mathcal M}
\def\II{\mathcal I}
\def\supp{{\rm supp}}
\def\proj{\mathrm{proj}}
\def\Ctrace{C_{\rm trace}}
\def\Cpatch{C_{\rm patch}}
\def\Cell{C_{\rm ell}}
\def\Cclos{C_{\rm clos}}
\def\Cson{C_{\rm son}}
\def\Cinv{C_{\rm inv}}
\newcounter{constantsnumber}
\def\namec#1#2{%
  \ifthenelse{\equal{#1}{lipschitz}}{C_{\rm lip}}{%
  \ifthenelse{\equal{#1}{c:unifEquivLevel}}{C_{\rm level}}{%
  \ifthenelse{\equal{#1}{mark}}{C_{\rm mark}}{%
  \ifthenelse{\equal{#1}{basis}}{C_{\rm basis}}{%
  \ifthenelse{\equal{#1}{monotone}}{C_{\rm mon}}{%
  \ifthenelse{\equal{#1}{cea}}{C_{\mbox{\rm\scriptsize C\'ea}}}{%
  \ifthenelse{\equal{#1}{norm}}{C_{\rm norm}}{%
  \ifthenelse{\equal{#1}{mon}}{{C}_{\rm mon}}{
  \ifthenelse{\equal{#1}{lip}}{{C}_{\rm lip}}{
  \ifthenelse{\equal{#1}{monA}}{c_{\rm mon}}{
  \ifthenelse{\equal{#1}{lipA}}{c_{\rm lip}}{
  \ifthenelse{\equal{#1}{normequiv1}}{c_{\rm norm}}{ 
  \ifthenelse{\equal{#1}{inv}}{C_{\rm inv}}{ 
  \ifthenelse{\equal{#1}{inv2}}{\widetilde{C}_{\rm inv}}{ 
  \ifthenelse{\equal{#2}{newcounter}}{\refstepcounter{constantsnumber}\label{const#1}}{}C_{\ref{const#1}}}%
  }}}}}}}}}}}}}}
\numberwithin{equation}{section}
\numberwithin{figure}{section}
\newtheorem{theorem}{Theorem}[section]
\newtheorem{lemma}[theorem]{Lemma}
\newtheorem{algorithm}[theorem]{Algorithm}
\newtheorem{remark}[theorem]{Remark}
\def\revision#1{{\color{black}{#1}}}
\def\new#1{{\color{black}{#1}}}
\newcommand*\patchAmsMathEnvironmentForLineno[1]{%
  \expandafter\let\csname old#1\expandafter\endcsname\csname #1\endcsname
  \expandafter\let\csname oldend#1\expandafter\endcsname\csname end#1\endcsname
  \renewenvironment{#1}%
     {\linenomath\csname old#1\endcsname}%
     {\csname oldend#1\endcsname\endlinenomath}}%
\newcommand*\patchBothAmsMathEnvironmentsForLineno[1]{%
  \patchAmsMathEnvironmentForLineno{#1}%
  \patchAmsMathEnvironmentForLineno{#1*}}%
\def\@seccntformat#1{%
  \protect\textup{\protect\@secnumfont
    \hspace*{5mm}\ifnum\pdfstrcmp{subsection}{#1}=0 \bfseries\fi
    \csname the#1\endcsname
    \protect\@secnumpunct
  }%
}
\def\section{\@startsection{section}{1}%
\z@{.7\linespacing\@plus\linespacing}{.5\linespacing}%
{\bfseries\normalsize\scshape\centering}}
\renewcommand{\@secnumfont}{\bfseries}
\begin{document}

\title{Adaptive IGAFEM with optimal \revision{convergence} rates: T-splines}

\author{Gregor Gantner}
\address{\new{Korteweg-de Vries Institute for Mathematics}\\
\new{University of Amsterdam, P.O. Box 94248, 1090 GE Amsterdam, The Netherlands}\\
\new{G.Gantner@uva.nl}}

\author{Dirk Praetorius}
\address{Institute for Analysis and Scientific Computing\\
TU Wien, Wiedner Hauptstra\ss{}e 8-10, A-1040 Wien, Austria\\
Dirk.Praetorius@asc.tuwien.ac.at}

\maketitle


\begin{abstract}
We consider an  adaptive algorithm for finite element methods for the isogeometric analysis (IGAFEM) of elliptic (possibly non-symmetric) second-order partial differential equations.
We employ analysis-suitable T-splines of arbitrary odd degree on T-meshes generated by the refinement strategy of [Morgenstern, Peterseim, Comput.\ Aided Geom.\ Design~34 (2015)] in 2D \new{and}  [Morgenstern, SIAM J.\ Numer.\ Anal.~54 (2016)] in 3D. 
Adaptivity is driven by some weighted residual {\sl a~posteriori} error estimator.
We prove linear convergence of the error estimator (\new{which is equivalent to} the sum of energy error plus data oscillations) with optimal algebraic rates \new{with respect to the number of elements of the underlying mesh}.
\end{abstract}

\keywords{\textbf{Keywords:} isogeometric analysis; T-splines; adaptivity, optimal convergence rates.}



\section{Introduction}

\subsection{Adaptivity in isogeometric analysis}
The central idea of isogeometric analysis (IGA)~\cite{pioneer,bible,approximation} is to use the same ansatz functions for the discretization of the partial differential equation (PDE) as for the representation of the problem geometry in computer aided design (CAD).
While the CAD standard for spline representation in a multivariate setting relies on tensor-product B-splines, several extensions of the B-spline model have emerged  to allow for adaptive refinement, e.g.,  (analysis-suitable) T-splines~\cite{szbn03,djs10,scott,beirao}, hierarchical splines~\cite{juttler,juttler2,vanderzee},  or LR-splines~\cite{lr,dokken}; see also \cite{comparison1,comparison2} for a  comparison of these approaches.
All these concepts have been studied via numerical experiments.
However, to the best of our knowledge,  the thorough mathematical analysis of adaptive isogeometric finite element methods (IGAFEM) is so far  restricted to hierarchical splines \cite{bg,bgcopy,igafem,garau16,bbgv19}.
Recently, linear convergence at optimal algebraic rate \new{with respect to the number of mesh elements} has been proved in \cite{bgcopy} for the refinement strategy of \cite{bg} based on truncated hierarchical B-splines \cite{juttler2}, and in our own work \cite{igafem} for a newly proposed refinement strategy based on standard hierarchical B-splines. 
In the latter work, we identified certain abstract properties for the underlying meshes, the mesh-refinement, and the finite element spaces that \new{imply well-posedness, reliability, and efficiency of a residual {\sl a~posteriori} error estimator and} guarantee linear convergence at optimal rate \new{for a related adaptive mesh-refining algorithm}.
\new{Moreover, in \cite{igafem} we} verified these properties in the case of hierarchical  splines.
We stress that  adaptivity is  well understood for standard FEM with globally continuous piecewise polynomials; see, e.g., \cite{doerfler,mns00,bdd,stevenson,ckns,ffp14,axioms}  for milestones on convergence and optimal convergence rates.
In the frame of adaptive isogeometric boundary element methods (IGABEM), we also mention our recent works \cite{fgp,resigabem,resigaconv,diss,gps19}.

\subsection{Model problem}\label{sec:model}
On the bounded Lipschitz domain $\Omega\subset\R^d$, $d\in\{ 2,3\}$, with initial mesh $\TT_0$  and for given $f\in L^2(\Omega)$ as well as  $\ff\in L^2(\Omega)^d$ with $\ff|_T\in \boldsymbol{H}({\rm div},T)$ for all $T\in\TT_0$, we consider a general second-order linear elliptic PDE in divergence form with homogenous Dirichlet boundary conditions
\begin{align}\label{eq:problem}
\begin{split}
\mathcal{L}u:=-\div(\AA\nabla u)+\bb\cdot\nabla u+cu&=f+\div \ff\quad \text{in }\Omega,\\
u&=0\quad\text{on }\partial\Omega.
\end{split}
\end{align}
We pose the following regularity assumptions on the coefficients:
$\AA(x)\in\R^{d\times d}_{\mathrm{sym}}$ is a symmetric and uniformly positive definite matrix with  $\AA\in L^\infty(\Omega)^{d\times d}$ and $\AA|_T\in W^{1,\infty}(T)$ for all $T\in\TT_0$.
The vector $\bb(x)\in\R^d$   and the scalar $c(x)\in\R$ satisfy that  $\bb,c\in L^\infty(\Omega)$.
We interpret  $\mathcal{L}$ in its weak form and define the corresponding bilinear form 
\begin{align}
\edual{w}{v}:=\int_\Omega \AA(x)\nabla w(x)\cdot\nabla v(x)+\bb(x)\cdot\nabla w(x) v(x)+c(x)w(x)v(x)\,dx. 
\end{align}
The bilinear form is continuous, i.e., it holds that
\begin{align}
\edual{w}{v}\le \big(\norm{\AA}{L^\infty(\Omega)}+\norm{\bb}{L^\infty(\Omega)}+\norm{c}{L^\infty(\Omega)}\big)\norm{w}{H^1(\Omega)}\norm{v}{H^1(\Omega)}
\text{ for all $v,w\in H^1(\Omega)$.}
\end{align}
Additionally, we suppose ellipticity of $\edual{\cdot}{\cdot}$ on $H^1_0(\Omega)$, i.e.,
\begin{align}\label{eq:ellipticity}
\edual{v}{v}\ge \Cell\norm{v}{H^1(\Omega)}^2\quad\text{for all }v\in H_0^1(\Omega).
\end{align}
Note that~\eqref{eq:ellipticity} is for instance satisfied if $\AA(x)$ is uniformly positive definite and if $\bb\in \boldsymbol{H}({\rm div},\Omega)$ with $-\frac12\,{\rm div}\,\bb(x)+c(x)\ge0$ almost everywhere in $\Omega$.

Overall, the boundary value  problem~\eqref{eq:problem} fits into the setting of the Lax--Milgram theorem and therefore admits a unique solution $u\in H_0^1(\Omega)$ to the weak formulation
\begin{align}
 \edual{u}{v} = \int_\Omega fv-\ff\cdot\nabla v\,dx
 \quad\text{for all }v\in H^1_0(\Omega).
\end{align}
Finally, we note that the additional regularity $\ff|_T\in \boldsymbol{H}({\rm div},T)$ and $\AA|_T\in W^{1,\infty}(T)$ for all $T\in\TT_0$  is only required for the well-posedness of the residual {\sl a~posteriori} error estimator; see Section~\ref{subsec:estimator}.

\subsection{Outline \& Contributions}
The remainder of this work is  organized  as follows: 
\def\Ceff{C_{\rm eff}}%
\def\Crel{C_{\rm rel}}%
\def\osc{{\rm osc}}%
Section~\ref{sec:hierarchical} recalls the definition of T-meshes and T-splines of arbitrary odd degree in the parameter domain (Section~\ref{subsec:parameter hsplines}) from \cite{beirao} for $d=2$ \new{and from} \cite{morgensternT2}\footnote{To be precise, we define T-splines for $d=3$ slightly different than \cite{morgensternT2}; see Section~\ref{section:basis} for details.} for $d=3$.
Moreover, it recalls corresponding refinement strategies (Section~\ref{subsec:concrete refinement}) from \cite{morgensternT1,morgensternT2}, derives a canonical basis for the T-spline space with homogeneous boundary conditions (Section~\ref{section:basis}), and transfers all the definitions to the physical domain $\Omega$ via some parametrization $\gamma:[0,1]^d\to \overline\Omega$ (Section~\ref{subsec:physical hsplines}).
Subsequently, \revision{we formulate} a standard adaptive algorithm (Algorithm~\ref{the algorithm}) of the form 
\begin{align} \label{eq:box-algorithm}
 \boxed{\texttt{solve}}
 \longrightarrow
 \boxed{\texttt{estimate}}
 \longrightarrow
 \boxed{\texttt{mark}}
\longrightarrow
 \boxed{\texttt{refine}}
\end{align}
driven by some residual {\sl a~posteriori} error estimator~\eqref{eq:eta}. 
For T-splines in 2D, this algorithm has already been investigated numerically in \cite{comparison2}.
Finally, our main result Theorem~\ref{thm:main} is presented. 
First, it states that the error estimator $\eta_\revision{\ell}$ associated with the FEM solution $U_\revision{\ell}\in\XX_\revision{\ell}\subset H^1_0(\Omega)$ is efficient and
reliable, i.e., there exist 
$\Ceff$,
$\Crel>0$ such that 
\begin{align}\label{intro:releff}
 \Ceff^{-1}\,\eta_\revision{\ell}
 \le \inf_{V_\revision{\ell}\in\XX_\revision{\ell}}\big(\norm{u-V_\revision{\ell}}{H^1(\Omega)} + \osc_\revision{\ell}(V_\revision{\ell})\big)
 \le 
 \norm{u-U_\revision{\ell}}{H^1(\Omega)} 
 + \osc_\revision{\ell}(U_\revision{\ell})
 \le \Crel\,\eta_\revision{\ell},
\end{align} 
\new{where $\eta_\ell$ denotes the error estimator in the $\ell$-th step of the adaptive algorithm and} $\osc_\revision{\ell}(\cdot)$ denotes \new{the corresponding} data oscillation terms (see \eqref{eq:osc}).
 Second, it states that Algorithm~\ref{the algorithm} leads to linear convergence with optimal rates in the spirit of~\cite{stevenson,ckns,axioms}:  There exist $C>0$ and $0<q<1$ such that
\begin{align}\label{intro:linear}
 \eta_{\ell+j} \le C\,q^j\,\eta_\ell
 \quad\text{for all }\ell,j\in\N_0.
\end{align}
Moreover, for sufficiently small marking parameters in Algorithm~\ref{the algorithm}, the estimator 
(\new{and thus equivalently also} the so-called total error $\norm{u-U_\ell}{H^1(\Omega)} + \osc_\ell(U_\ell)$; see~\eqref{intro:releff}) 
decays even with the optimal algebraic convergence rate with respect to the number of mesh elements, i.e., 
\begin{align}
\eta_\ell=\OO\big((\#\TT_\ell)^{-s}\big)\quad\text{for all }\ell\in\N_0,
\end{align}
whenever the rate $s>0$ is possible for optimally chosen meshes.
The proof of Theorem~\ref{thm:main} is postponed to Section~\ref{sec:proof}
\new{and is based on} abstract properties of the underlying meshes, the mesh-refinement, the finite element spaces,  and the oscillations  which \new{have been identified in \cite{igafem} and} imply (an abstract version of) Theorem~\ref{thm:main}.
In Section~\ref{sec:proof}, we briefly recapitulate these properties and verify them for \new{the present} T-spline setting.
The final Section~\ref{sec:generalizations} comments on possible extensions of Theorem~\ref{thm:main}.

\subsection{General notation}
\label{sec:general notation}
Throughout, $|\cdot|$ denotes the absolute value of scalars, the Euclidean norm of vectors in $\R^d$, \new{and}  the $d$-dimensional measure of a set in $\R^d$. 
Moreover, $\#$ denotes the cardinality of a  set as well as the multiplicity of a knot within a given knot vector.
We write $A\lesssim B$ to abbreviate $A \le CB$ with some generic constant $C > 0$, which is clear from the context. Moreover, $A \simeq  B$ abbreviates $A\lesssim B \lesssim A$. Throughout, mesh-related quantities have the same index, e.g., $\XX_\bullet$ is the ansatz space corresponding to the  mesh  $\TT_\bullet$. 
The analogous notation is used for meshes $\TT_\circ$, $\TT_\star$, $\TT_{\ell}$ etc.
Moreover, we use $\widehat{\cdot}$ to transfer  quantities in the physical domain $\Omega$ to the parameter domain $\widehat \Omega$, e.g., we write $\widehat\T$ for the set of all admissible meshes in the parameter domain, while  $\T$  denotes the set of all admissible meshes in the physical domain.


\section{Adaptivity with T-splines}\label{sec:hierarchical}
In this section, we recall the formal definition of T-splines from \cite{beirao} for $d=2$ \new{and from}  \cite{morgensternT2}
for $d=3$ as well as  corresponding mesh-refinement strategies from \cite{morgensternT1,morgensternT2}. 
\new{While the mathematically sound definition is a bit tedious, the basic idea of T-splines is simple: 
Given a rectangular mesh (with hanging nodes) as in Figure~\ref{fig:Tmesh}, one associates to all nodes a local knot vector in each direction as the intersections (projected into the white area $\overline{\widehat\Omega}$) of the line in that direction through the node (indicated in red) with the mesh skeleton.
The resulting local knot vectors then induce a standard tensor-product B-spline, and the set of all such B-splines spans the corresponding T-spline space.}
\new{Moreover,} we formulate an adaptive algorithm (Algorithm~\ref{the algorithm}) for conforming FEM discretizations  of our model problem~\eqref{eq:problem}, where adaptivity is driven by the residual {\sl a~posteriori} error estimator (see \eqref{eq:eta} below).
Our main result of the present work Theorem~\ref{thm:main}  states reliability and efficiency of the estimator as well as linear convergence at optimal algebraic rate \new{with respect to the number of mesh elements.}

\subsection{T-meshes and T-splines in the parameter domain $\bold{\widehat{\Omega}}$}\label{subsec:parameter hsplines}
Meshes $\TT_\coarse$ and corresponding spaces $\XX_\coarse$ are defined through their counterparts on the \textit{parameter domain}
\begin{align}
\widehat\Omega:=(0,N_1)\times\dots\times(0,N_d),
\end{align} where $N_i\in\N$ are fixed integers for $i\in\{1,\dots,d\}$.
\revision{Recall that the symbol $\bullet$ is used for a generic mesh index to relate corresponding quantities; see the general notation of Section~\ref{sec:general notation}.}
Let $p_1,\dots,p_d\ge 3$ be  fixed odd polynomial degrees.
Let $\widehat\TT_0$ be an initial uniform tensor-mesh of the form
\begin{align}\label{eq:TT0}
\widehat\TT_0=\Big\{\prod_{i=1}^d[a_i,a_i+1]:a_i\in\{0,\dots,N_i-1\}\text{ for }i\in\{1,\dots,d\}\Big\}.
\end{align}
For an arbitrary hyperrectangle $\widehat T=[a_1,b_1]\times\dots[a_d,b_d]$, we define its bisection in direction $i\in\{1,\dots,d\}$ as the set 
\begin{align}
\begin{split}
\bisect_i(\widehat T):=
\Big\{&\prod_{j=1}^{i-1}[a_j,b_j]\times\Big[a_i,\frac{a_i+b_i}{2}\Big]\times\prod_{j=i+1}^d[a_j,b_j],\\
&\prod_{j=1}^{i-1}[a_j,b_j]\times\Big[\frac{a_i+b_i}{2},b_i\Big]\times\prod_{j=i+1}^d[a_j,b_j]\Big\}.
\end{split}
\end{align}
For $k\in\N_0$, let 
\begin{align}\label{eq:underline}
\k:=\lfloor k/d\rfloor d
\end{align}
 and define the $k$-th uniform refinement of $\widehat\TT_0$ inductively by
\begin{align}
\widehat\TT_{\uni{0}}:=\widehat\TT_0\quad\text{and}\quad
\widehat\TT_{\uni{k}}:=\bigcup\set{\bisect_{k+1-\k}(\widehat T)}{\widehat T\in\widehat\TT_{\uni{k-1}}};
\end{align}
see Figure~\ref{fig:bisect2d} and \ref{fig:bisect3d}.
\new{Note that the direction of bisection changes periodically.}

\begin{figure}
\begin{tikzpicture}[scale=2]
    \coordinate (O) at (0,0);
    \draw[line width=0.5mm, color=black] (O) -- (1,0);
    \draw[line width=0.5mm, color=black] (1,0) -- (1,1);
    \draw[line width=0.5mm, color=black] (1,1) -- (0,1);
    \draw[line width=0.5mm, color=black] (0,1) -- (O);        
    \node[below] at (current bounding box.south) {$\widehat\TT_{\uni{0}}$};
\end{tikzpicture}
\qquad
\begin{tikzpicture}[scale=2]
    \coordinate (O) at (0,0);
    \draw[line width=0.5mm, color=black] (O) -- (1,0);
    \draw[line width=0.5mm, color=black] (1,0) -- (1,1);
    \draw[line width=0.5mm, color=black] (1,1) -- (0,1);
    \draw[line width=0.5mm, color=black] (0,1) -- (O);        
    \draw[line width=0.5mm, color=black] (0.5,0) -- (0.5,1);        
    \node[below] at (current bounding box.south) {$\widehat\TT_{\uni{1}}$};
\end{tikzpicture}
\qquad
\begin{tikzpicture}[scale=2]
    \coordinate (O) at (0,0);
    \draw[line width=0.5mm, color=black] (O) -- (1,0);
    \draw[line width=0.5mm, color=black] (1,0) -- (1,1);
    \draw[line width=0.5mm, color=black] (1,1) -- (0,1);
    \draw[line width=0.5mm, color=black] (0,1) -- (O);        
    \draw[line width=0.5mm, color=black] (0.5,0) -- (0.5,1);        
    \draw[line width=0.5mm, color=black] (0,0.5) -- (1,0.5);  
    \node[below] at (current bounding box.south) {$\widehat\TT_{\uni{2}}$};
\end{tikzpicture}
\qquad
\begin{tikzpicture}[scale=2]
    \coordinate (O) at (0,0);
    \draw[line width=0.5mm, color=black] (O) -- (1,0);
    \draw[line width=0.5mm, color=black] (1,0) -- (1,1);
    \draw[line width=0.5mm, color=black] (1,1) -- (0,1);
    \draw[line width=0.5mm, color=black] (0,1) -- (O);        
    \draw[line width=0.5mm, color=black] (0.5,0) -- (0.5,1);        
    \draw[line width=0.5mm, color=black] (0,0.5) -- (1,0.5);        
    \draw[line width=0.5mm, color=black] (0.25,0) -- (0.25,1);        
    \draw[line width=0.5mm, color=black] (0.75,0) -- (0.75,1);  
    \node[below] at (current bounding box.south) {$\widehat\TT_{\uni{3}}$};      
\end{tikzpicture}
\qquad
\begin{tikzpicture}[scale=2]
    \coordinate (O) at (0,0);
    \draw[line width=0.5mm, color=black] (O) -- (1,0);
    \draw[line width=0.5mm, color=black] (1,0) -- (1,1);
    \draw[line width=0.5mm, color=black] (1,1) -- (0,1);
    \draw[line width=0.5mm, color=black] (0,1) -- (O);        
    \draw[line width=0.5mm, color=black] (0.5,0) -- (0.5,1);        
    \draw[line width=0.5mm, color=black] (0,0.5) -- (1,0.5);        
    \draw[line width=0.5mm, color=black] (0.25,0) -- (0.25,1);        
    \draw[line width=0.5mm, color=black] (0.75,0) -- (0.75,1);      
    \draw[line width=0.5mm, color=black] (0,0.25) -- (1,0.25);      
    \draw[line width=0.5mm, color=black] (0,0.75) -- (1,0.75);    
    \node[below] at (current bounding box.south) {$\widehat\TT_{\uni{4}}$};          
\end{tikzpicture}

\caption{
Uniform refinements of 2D initial partition $\widehat\TT_0:=\{[0,1]^2\}$.}
\label{fig:bisect2d}
\end{figure}
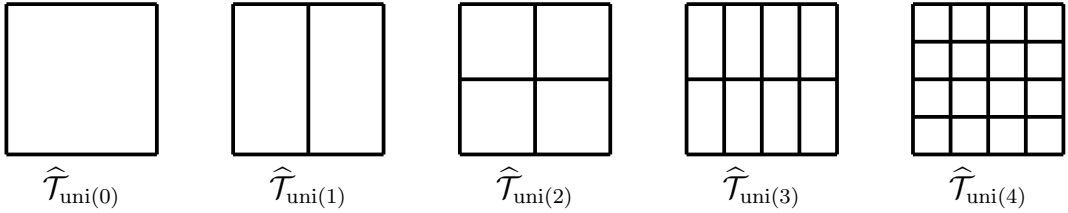

\begin{figure}
\tdplotsetmaincoords{70}{20}
\begin{tikzpicture}[scale=2,tdplot_main_coords]
    \coordinate (O) at (0,0,0);
    \draw[line width=0.5mm, color=black] (O) -- (1,0,0);
    \draw[line width=0.5mm, color=black] (O) -- (0,0,1);
    \draw[line width=0.5mm, color=black] (1,0,0) -- (1,0,1);
    \draw[line width=0.5mm, color=black] (1,0,1) -- (1,1,1);
    \draw[line width=0.5mm, color=black] (0,0,1) -- (1,0,1);
    \draw[line width=0.5mm, color=black] (1,0,0) -- (1,1,0);
    \draw[line width=0.5mm, color=black] (1,1,0) -- (1,1,1);
    \draw[line width=0.5mm, color=black] (0,1,1) -- (1,1,1);
    \draw[line width=0.5mm, color=black] (0,0,1) -- (0,1,1);      
    \node[below] at (current bounding box.south) {$\widehat\TT_{\uni{0}}$};  
\end{tikzpicture}
\quad
\begin{tikzpicture}[scale=2,tdplot_main_coords]
    \coordinate (O) at (0,0,0);
    \draw[line width=0.5mm, color=black] (O) -- (1,0,0);
    \draw[line width=0.5mm, color=black] (O) -- (0,0,1);
    \draw[line width=0.5mm, color=black] (1,0,0) -- (1,0,1);
    \draw[line width=0.5mm, color=black] (1,0,1) -- (1,1,1);
    \draw[line width=0.5mm, color=black] (0,0,1) -- (1,0,1);
    \draw[line width=0.5mm, color=black] (1,0,0) -- (1,1,0);
    \draw[line width=0.5mm, color=black] (1,1,0) -- (1,1,1);
    \draw[line width=0.5mm, color=black] (0,1,1) -- (1,1,1);
    \draw[line width=0.5mm, color=black] (0,0,1) -- (0,1,1);
    \draw[line width=0.5mm, color=black] (0.5,0,0) -- (0.5,0,1);
    \draw[line width=0.5mm, color=black] (0.5,0,1) -- (0.5,1,1);    
    \node[below] at (current bounding box.south) {$\widehat\TT_{\uni{1}}$};
\end{tikzpicture}
\quad
\begin{tikzpicture}[scale=2,tdplot_main_coords]
    \coordinate (O) at (0,0,0);
    \draw[line width=0.5mm, color=black] (O) -- (1,0,0);
    \draw[line width=0.5mm, color=black] (O) -- (0,0,1);
    \draw[line width=0.5mm, color=black] (1,0,0) -- (1,0,1);
    \draw[line width=0.5mm, color=black] (1,0,1) -- (1,1,1);
    \draw[line width=0.5mm, color=black] (0,0,1) -- (1,0,1);
    \draw[line width=0.5mm, color=black] (1,0,0) -- (1,1,0);
    \draw[line width=0.5mm, color=black] (1,1,0) -- (1,1,1);
    \draw[line width=0.5mm, color=black] (0,1,1) -- (1,1,1);
    \draw[line width=0.5mm, color=black] (0,0,1) -- (0,1,1);
    \draw[line width=0.5mm, color=black] (0.5,0,0) -- (0.5,0,1);
    \draw[line width=0.5mm, color=black] (0.5,0,1) -- (0.5,1,1);    
    \draw[line width=0.5mm, color=black] (1,0.5,0) -- (1,0.5,1);    
    \draw[line width=0.5mm, color=black] (1,0.5,1) -- (0,0.5,1);    
    \node[below] at (current bounding box.south) {$\widehat\TT_{\uni{2}}$};
\end{tikzpicture}
\quad
\begin{tikzpicture}[scale=2,tdplot_main_coords]
    \coordinate (O) at (0,0,0);
    \draw[line width=0.5mm, color=black] (O) -- (1,0,0);
    \draw[line width=0.5mm, color=black] (O) -- (0,0,1);
    \draw[line width=0.5mm, color=black] (1,0,0) -- (1,0,1);
    \draw[line width=0.5mm, color=black] (1,0,1) -- (1,1,1);
    \draw[line width=0.5mm, color=black] (0,0,1) -- (1,0,1);
    \draw[line width=0.5mm, color=black] (1,0,0) -- (1,1,0);
    \draw[line width=0.5mm, color=black] (1,1,0) -- (1,1,1);
    \draw[line width=0.5mm, color=black] (0,1,1) -- (1,1,1);
    \draw[line width=0.5mm, color=black] (0,0,1) -- (0,1,1);     
    \draw[line width=0.5mm, color=black] (0.5,0,0) -- (0.5,0,1);
    \draw[line width=0.5mm, color=black] (0.5,0,1) -- (0.5,1,1);    
    \draw[line width=0.5mm, color=black] (1,0.5,0) -- (1,0.5,1);    
    \draw[line width=0.5mm, color=black] (1,0.5,1) -- (0,0.5,1);    
    \draw[line width=0.5mm, color=black] (0,0,0.5) -- (1,0,0.5);    
    \draw[line width=0.5mm, color=black] (1,0,0.5) -- (1,1,0.5);    
    \node[below] at (current bounding box.south) {$\widehat\TT_{\uni{3}}$};
\end{tikzpicture}
\quad
\begin{tikzpicture}[scale=2,tdplot_main_coords]
    \coordinate (O) at (0,0,0);
    \draw[line width=0.5mm, color=black] (O) -- (1,0,0);
    \draw[line width=0.5mm, color=black] (O) -- (0,0,1);
    \draw[line width=0.5mm, color=black] (1,0,0) -- (1,0,1);
    \draw[line width=0.5mm, color=black] (1,0,1) -- (1,1,1);
    \draw[line width=0.5mm, color=black] (0,0,1) -- (1,0,1);
    \draw[line width=0.5mm, color=black] (1,0,0) -- (1,1,0);
    \draw[line width=0.5mm, color=black] (1,1,0) -- (1,1,1);
    \draw[line width=0.5mm, color=black] (0,1,1) -- (1,1,1);
    \draw[line width=0.5mm, color=black] (0,0,1) -- (0,1,1);     
    \draw[line width=0.5mm, color=black] (0.5,0,0) -- (0.5,0,1);
    \draw[line width=0.5mm, color=black] (0.5,0,1) -- (0.5,1,1);    
    \draw[line width=0.5mm, color=black] (1,0.5,0) -- (1,0.5,1);    
    \draw[line width=0.5mm, color=black] (1,0.5,1) -- (0,0.5,1);    
    \draw[line width=0.5mm, color=black] (0,0,0.5) -- (1,0,0.5);    
    \draw[line width=0.5mm, color=black] (1,0,0.5) -- (1,1,0.5);    
    \draw[line width=0.5mm, color=black] (0.25,0,0) -- (0.25,0,1);    
    \draw[line width=0.5mm, color=black] (0.75,0,0) -- (0.75,0,1);    
    \draw[line width=0.5mm, color=black] (0.25,0,1) -- (0.25,1,1);    
    \draw[line width=0.5mm, color=black] (0.75,0,1) -- (0.75,1,1);   
    \node[below] at (current bounding box.south) {$\widehat\TT_{\uni{4}}$}; 
\end{tikzpicture}
\caption{Uniform refinements of 3D initial partition $\widehat\TT_0:=\{[0,1]^3\}$.}
\label{fig:bisect3d}
\end{figure}

A finite set $\widehat\TT_\coarse$ is a \emph{T-mesh (in the parameter domain)}, if $\widehat\TT_\coarse \subseteq\bigcup_{k\in\N_0} \widehat\TT_{\uni{k}}$, $\bigcup_{\widehat T\in\widehat\TT_\coarse} \widehat T=\overline{\widehat\Omega}$, and $|\widehat T\cap\widehat T'|=0$ for all $\widehat T,\widehat T'\in\widehat\TT_\coarse$ with $\widehat T\neq\widehat T'$.
 For an illustrative example of a \new{general} T-mesh, see Figure~\ref{fig:Tmesh}.
Since $\widehat\TT_{\uni{k}}\cap\widehat\TT_{\uni{k'}}=\emptyset$ for $k,k'\in\N_0$ with $k\neq k'$, each element $\widehat T\in\widehat\TT_\coarse$ has a natural level
\begin{align}
\level(\widehat T):=k\in\N_0 \quad\text{with }\widehat T\in\widehat\TT_{\uni{k}}.
\end{align}
 
 \begin{figure}[t] 
\begin{center}
\includegraphics[width=1\textwidth]{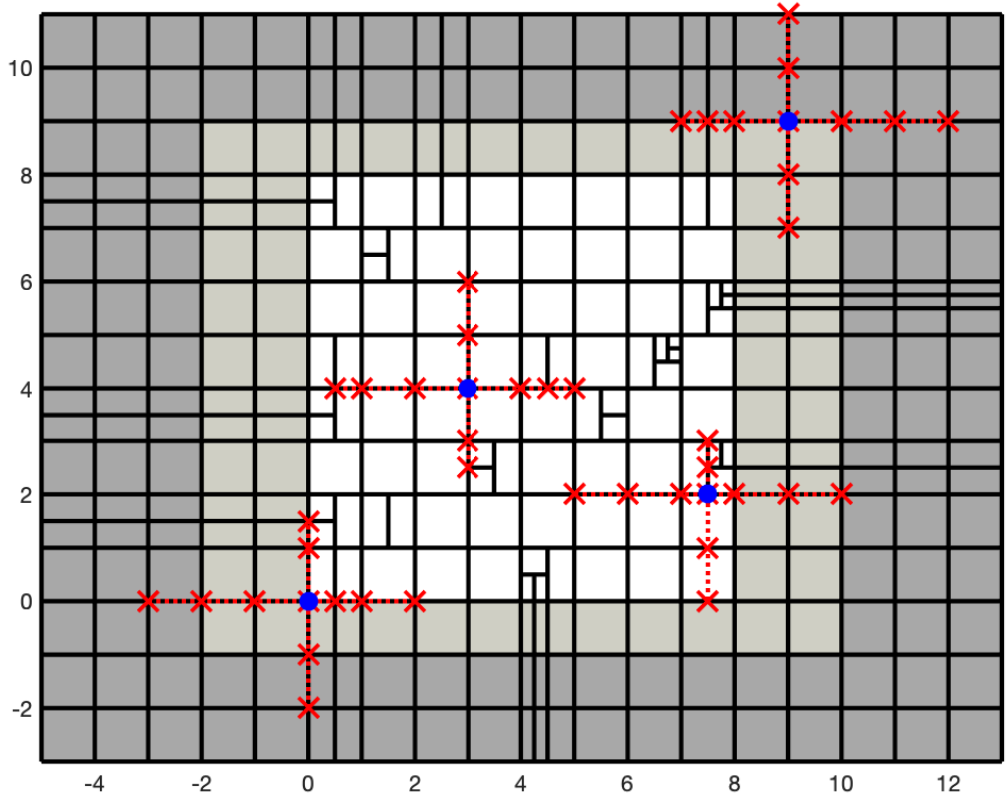}
\end{center}
\caption{
A \new{general (non-admissible)} T-mesh $\widehat\TT_\bullet^\ex$ in 2D with $(p_1,p_2)=(5,3)$ is depicted.
The sets $\widehat\Omega$, $\widehat\Omega^\act$, and $\widehat\Omega^\ex$ are highlighted in white, light gray, and dark gray, respectively.
For the three (blue) nodes $z\in\{(0,0),(3,4),\new{(7.5,2)},(9,9)\}$, their corresponding local index vectors $\widehat\II_{\coarse,i}^{\rm loc}(z)$ with $i\in\{1,2\}$  are indicated by (red) crosses.
\new{We also indicate in red the lines through the nodes mentioned at the beginning of  Section~\ref{sec:hierarchical}. The local knot vectors are obtained by setting all negative values to $0$ and all values larger than $8$ to $8$, i.e., 
$\widehat\KK_{\coarse,1}^{\rm loc}\big((0,0)\big)=(0,0,0,0,0.5,1,2)$, 
$\widehat\KK_{\coarse,2}^{\rm loc}\big((0,0)\big)=(0,0,0,1,1.5)$,
$\widehat\KK_{\coarse,1}^{\rm loc}\big((3,4)\big)=(0.5,1,2,3,4,4.5,5)$, 
$\widehat\KK_{\coarse,2}^{\rm loc}\big((3,4)\big)=(2.5,3,4,5,6)$, 
$\widehat\KK_{\coarse,1}^{\rm loc}\big((7.5,2)\big)=(5,6,7,7.5,8,8,8)$, 
$\widehat\KK_{\coarse,2}^{\rm loc}\big((7.5,2)\big)=(0,1,2,2.5,3)$,
$\widehat\KK_{\coarse,1}^{\rm loc}\big((9,9)\big)=(7,7.5,8,8,8,8,8)$, 
$\widehat\KK_{\coarse,2}^{\rm loc}\big((9,9)\big)=(7,8,8,8,8)$.
This guarantees that all associated B-splines have support within the closure of~$\widehat\Omega$.}
}
\label{fig:Tmesh}
\end{figure}

 In order to define T-splines, \new{in particular to allow for knot multiplicities larger than one at the boundary,}  we have to extend the mesh $\widehat\TT_\coarse$ on $\overline{\widehat\Omega}$ to a mesh on $\overline{\widehat\Omega^\ex}$, where
 \begin{align}
 \widehat\Omega^\ex:=\prod_{i=1}^d(-p_i,N_i+p_i).
  \end{align}
We define $\widehat\TT_0^\ex$ analogously to \eqref{eq:TT0} and $\widehat\TT_\coarse^\ex$ as the mesh on $\overline{\widehat\Omega^\ex}$ that is obtained by extending any bisection, \new{which} takes place on the boundary $\partial\widehat\Omega$ during the refinement from $\widehat\TT_0$ to $\widehat\TT_\coarse$, to the set $\overline{\widehat\Omega^\ex}\setminus\overline{\widehat\Omega}$; see Figure~\ref{fig:Tmesh}.
For $d=2$, this \new{formally} reads
\begin{align}
\widehat\TT_\coarse^\ex:=\widehat\TT_\coarse\cup\set{{\rm ext}_\coarse(\widehat E_1\times \widehat E_2)}{{\rm dim}(\widehat E_1\times \widehat E_2)<2\wedge \widehat E_i\in\{\{0\},\{N_i\},[0,N_i]\}\text{ for }i\in\{1,2\}}, 
 \end{align}
 where 
\begin{align*}
 {\rm ext}_\coarse\big(\{(0,0)\}\big)&:=\set{[a_1,a_1+1]\times[a_2,a_2+1]}{a_i\in\{-p_i,\dots,-1\}\text{ for }i\in\{1,2\}},\\
{\rm ext}_\coarse\big([0,N_1]\times\{0\})\big)&:= \set{[a_1,b_1]\times[a_2,a_2+1]}{a_2\in\{-p_2,\dots,-1\}\wedge\exists b_2':[a_1,b_1]\times[0,b_2']\in\widehat\TT_\coarse},
\end{align*} 
and the remaining ${\rm ext}_\coarse(\cdot)$ terms are defined analogously.
Note that the logical expression 
\begin{align*}
\exists b_2':[a_1,b_1]\times[0,b_2']\in\widehat\TT_\coarse
\end{align*}
 means that there exists an element at the (lower part of the) boundary $\partial\widehat\Omega$ with side $[a_1,b_1]$.
 For $d=3$, this reads 
 \begin{align*}
\widehat\TT_\coarse^\ex:=\widehat\TT_\coarse \cup\Big\{{\rm ext}_\coarse\Big({\prod_{i=1}^3}\widehat E_i\Big):{{\rm dim}\Big(\prod_{i=1}^3}\widehat E_i\Big)<3\wedge \widehat E_i\in\{\{0\},\{N_i\},[0,N_i]\}\text{ for }i\in\{1,2,3\}\Big\},
 \end{align*}
 where 
 \begin{align*}
& {\rm ext}_\coarse\big(\{(0,0,0)\}\big):=\Big\{\prod_{i=1}^3[a_i,a_i+1]: a_i\in\{-p_i,\dots,-1\}\text{ for }i\in\{1,2,3\}\Big\},\\
&{\rm ext}_\coarse\big([0,N_1]\times\{0\}\times\{0\}\big):= \big\{[a_1,b_1]\times[a_2,a_2+1]\times[a_3,a_3+1]:\\
&\qquad a_i\in\{-p_i,\dots,-1\}\text{ for }i\in\{2,3\}\wedge\exists b_2',b_3':[a_1,b_1]\times[0,b_2']\times[0,b_3']\in\widehat\TT_\coarse\big\},\\
&{\rm ext}_\coarse\big([0,N_1]\times[0,N_2]\times\{0\}\big):= \big\{[a_1,b_1]\times[a_2,b_2]\times[a_3,a_3+1]:\\
&\qquad a_3\in\{-p_3,\dots,-1\}\wedge\exists b_3':[a_1,b_1]\times[a_2,b_2]\times[0,b_3']\in\widehat\TT_\coarse\big\},
\end{align*} 
and the remaining ${\rm ext}_\coarse(\cdot)$ terms are defined analogously.
Note that the logical expressions
\begin{align*}
\exists b_2',b_3':[a_1,b_1]\times[0,b_2']\times[0,b_3']\in\widehat\TT_\coarse
\quad\text{\new{and}}\quad \exists b_3':[a_1,b_1]\times[a_2,b_2]\times[0,b_3']\in\widehat\TT_\coarse
\end{align*}
 mean that there exists an element at the (lower part of the) boundary $\partial\widehat\Omega$ with side $[a_1,b_1]$, \new{and} with sides $[a_1,b_1]$ \new{as well as} $[a_2,b_2]$, \new{respectively}.
The corresponding \textit{skeleton} in any direction $i\in\{1,\dots,d\}$ reads
 \begin{align}
 \partial_i \widehat\TT^\ex_\coarse:=\bigcup\big\{\prod_{j=1}^{i-1}[a_j,b_j]\times\{a_i,b_i\}\times\prod_{j=i+1}^d[a_j,b_j]: \prod_{j=1}^d[a_j,b_j]\in\widehat\TT^\ex_\coarse\big\}.
 \end{align}
Recall that $p_i\ge 3$ are odd.
We abbreviate 
\begin{align}\label{eq:omega1}
\widehat\Omega^\act:=\prod_{i=1}^d\big(-(p_i-1)/2,N_i+(p_i-1)/2\big).
\end{align}
As in the literature, its closure $\overline{\widehat \Omega^\act}$ is called \textit{active region}, whereas   $\overline{\widehat\Omega^\ex}\setminus\widehat\Omega^\act$ is called \textit{frame region}. 
The set of \textit{nodes} $\widehat\NN^\act_\coarse$ in the active region reads 
\begin{align}\label{eq:nodes1}
\widehat\NN_\coarse^\act:=\set{z\in\overline{\widehat\Omega^\act}}{z\text{ is vertex of some }\widehat T\in\widehat\TT_\coarse^\act}.
\end{align}
To each node $z=(z_1,\dots,z_d)\in\widehat\NN^\act_\coarse$ and each direction $i\in\{1,\dots,d\}$, we associate the corresponding \textit{global index vector}
\new{which is obtained by drawing a line in the $i$-th direction through the node $z$ and collecting the $i$-th coordinates of the intersections with the skeleton.
Formally, this reads}
\begin{align*}
\widehat\II_{\coarse,i}^{\rm gl}(z):={\rm sort}\big(\set{t\in[-p_i,N_i+p_i]}{(z_1,\dots,z_{i-1},t,z_{i+1},\dots,z_d)\in\partial_i\widehat\TT^\ex_\coarse}\big),
\end{align*}
where ${\rm sort}(\cdot)$ returns (in ascending order) the sorted vector corresponding to a set of numbers.
The corresponding \textit{local index vector} 
\begin{align}
\widehat\II^{\rm loc}_{\coarse,i}(z)\in\R^{p_i+2}
\end{align}
is the  vector of all $p_i+2$ consecutive elements in $\widehat\II_{\coarse,i}^{\rm gl}(z)$ having $z_i$ as their $((p_i+3)/2)$-th (i.e., their middle) entry; see Figure~\ref{fig:Tmesh}.
Note that such elements always exist due to the definition of $\widehat\II_{\coarse,i}^{\rm gl}(z)$ and the fact that $p_i$ is odd.
This induces the \textit{global knot vector}
\begin{align}
\widehat\KK_{\coarse,i}^{\rm gl}(z):=\max\Big(\min\big(\widehat\II_{\coarse,i}^{\rm gl}(z),N_i\big),0\Big),
\end{align}
and the \textit{local knot vector}
\begin{align}\label{eq:local knot vector}
\widehat\KK_{\coarse,i}^{\rm loc}(z):=\max\Big(\min\big(\widehat\II_{\coarse,i}^{\rm loc}(z),N_i\big),0\Big), 
\end{align}
where $\max(\cdot,0)$ and $\min(\cdot,N_i)$ are understood element-wise (i.e., for each element in $\widehat \II_{\coarse,i}^{\rm gl}(z)$ \new{and} $\widehat \II_{\coarse,i}^{\rm \new{loc}}(z)$, \new{respectively}).
We stress that the resulting global knot vectors in each direction are so-called \emph{open knot vectors}, i.e., the multiplicity of the first knot $0$ and the last knot $N_i$ is $p_i+1$.
Moreover, the interior knots coincide with the indices in $\widehat\Omega$ and all have multiplicity one.
For more general index to parameter mappings, we refer to Section~\ref{subsec:non-uniform}.
We define the corresponding tensor-product B-spline $\widehat B_{\coarse,z}:\overline{\widehat \Omega}\to \R$ as 
\begin{align}\label{eq:Bz}
\widehat B_{\coarse,z}(t_1,\dots,t_d):=\prod_{i=1}^d \widehat B\big(t_i\,|\,\widehat\KK^{\rm loc}_{\coarse,i}(z)\big)\quad\text{for \new{all} }(t_1,\dots,t_d)\in\overline{\widehat\Omega}, 
\end{align}
where $\widehat B\big(t_i\,|\,\widehat\KK^{\rm loc}_{\coarse,i}(z)\big)$ denotes the unique one-dimensional B-spline induced by $\widehat\KK^{\rm loc}_{\coarse,i}(z)$.
\new{For convenience of the reader, we recall the following definition for arbitrary $p\in\N_0$ via divided differences\footnote{\new{For any function $F:\R\to\R$, divided differences are recursively defined via $[x_0]F:=F(x_0)$ and $[x_0,\dots,x_{j+1}]F:=\big([x_1,\dots,x_{j+1}]F - [x_0,\dots,x_j]F\big) / (x_{j+1}-x_0)$ for $j=0,\dots,p$.}},
\begin{align*}
\widehat B\big(t|(x_0,\dots,x_{p+1})\big) := (x_{p+1}-x_0) \cdot [x_0,\dots,x_{p+1}] \big(\max\{(\cdot)-t,0\}^p\big)
\\ \text{for \new{all} } t\in\R\text{ and } x_0\le\dots\le x_{p+1};
\end{align*}
see also \cite{boor} for equivalent definitions and elementary properties.
We only mention that $\widehat B\big(\cdot|(x_0,\dots,x_{p+1})\big)$ is positive on the open interval $(x_0,x_{p+1})$, it does not vanish at $x_0$  if and only if $x_0=\dots=x_{p}$, and it does not vanish at $x_{p+1}$ if and only if $x_1=\dots=x_{p+1}$. 
Due to definition~\eqref{eq:local knot vector}, each $\widehat B_{\coarse,z}$ has thus indeed only support within the closure of the parameter domain $\widehat\Omega$, and multiple knots may only occur at the boundary $\partial\widehat\Omega$.}
According to, e.g., \cite[Section~6]{boor},   each tensor-product B-spline satisfies that $\widehat B_{\coarse,z}\in C^2\big(\overline{\widehat\Omega}\big)$.
With this, we see for the space of \textit{T-splines in the parameter domain} that 
\begin{align}\label{eq:Tsplines C2}
\widehat\YY_\bullet:={\rm span}\set{\widehat B_{\coarse,z}}{z\in\widehat\NN^\act_\coarse}\subset C^2\big(\overline{\widehat\Omega}\big).
\end{align}
Finally, we define our ansatz space in the parameter domain as 
\begin{align}
\widehat\XX_\bullet:=\set{\widehat V_\bullet\in\widehat\YY_\bullet}{\widehat V_\bullet|_{\partial\widehat\Omega}=0}. 
\end{align}
Note that this specifies the abstract setting of Section~\ref{subsec:ansatz}.
 For a more detailed introduction to T-meshes and splines, we refer to, e.g., \cite[Section~7]{variational}.

\subsection{Refinement in the parameter domain $\bold{\widehat\Omega}$}\label{subsec:concrete refinement}
In this section, we recall the refinement algorithm from \cite[Algorithm~2.9 and Corollary~2.15]{morgensternT1} for $d=2$ and \cite[Algorithm~2.9]{morgensternT2} for $d=3$; see also \cite[Chapter~5]{diss_morgenstern}.
To this end, we first define for a T-mesh $\widehat\TT_\coarse$ and $\widehat T\in\widehat\TT_\bullet$ with $k:=\level(\widehat T)$ the set of 
its \textit{neighbors}
\begin{align}\label{eq:neighbors}
\boldsymbol{\rm N}_\bullet(\widehat T):=\set{\widehat T'\in\widehat\TT_\coarse}{\exists t\in\widehat T'\text{ with }|{\rm mid}_i(\widehat T)-t_i|<D_i(k)\text{ for all }i\in\{1,\dots,d\}},
\end{align}
where ${\rm mid}(\widehat T)=({\rm mid}_1(\widehat T),\dots,{\rm mid}_d(\widehat T))$ denotes the midpoint of $\widehat T$ and $D(k)=(D_1(k),\dots,D_d(k))$ is defined as 
\begin{align}\label{eq:D defined}
D(k):=\begin{cases}
2^{-k/2}\big(p_1/2,\,p_2/2+1\big)\,&\text{if }d=2\text{ and }k=0\,\,{\rm mod}\,\,2,\\
2^{-(k\new{-}1)/2}\big(p_1/\new{4}+1\new{/2},\,p_2\new{/2}\big)\,&\text{if }d=2\text{ and }k=1\,\,{\rm mod}\,\,2,\\
2^{-k/3}\big(p_1+3/2,\,p_2+3/2,\,p_3+3/2\big)\,&\text{if }d=3\text{ and }k=0\,\,{\rm mod}\,\,3,\\
2^{-(k-1)/3}\big(p_1/2+3/4,\,p_2+3/2,\,p_3+3/2\big)\,&\text{if }d=3\text{ and }k=1\,\,{\rm mod}\,\,3,\\
2^{-(k-2)/3}\big(p_1/2+3/4,\,p_2/2+3/4,\,p_3+3/2\big)\,&\text{if }d=3\text{ and }k=2\,\,{\rm mod}\,\,3;
\end{cases}
\end{align}
\new{see Figure~\ref{fig:Tmesh_uni} and \ref{fig:Tmeshes}
 for some examples.
For $d=2$, \cite[Corollary~2.15]{morgensternT1} also provides the following identity
\begin{align}
\boldsymbol{\rm N}_\bullet(\widehat T)=\set{\widehat T'\in\widehat\TT_\coarse}{|{\rm mid}_i(\widehat T)- {\rm mid}_i(\widehat T')|\le D_i(k)\text{ for all }i\in\{1,2\}}.
\end{align}
}
We define the set of \textit{bad neighbors}
\begin{align}\label{eq:bad neighbors}
\boldsymbol{\rm N}^{\rm bad}_\bullet(\widehat T)&:=\set{\widehat T'\in\boldsymbol{\rm N}_\bullet(\widehat T)}{\level(\widehat T')<\level(\widehat T)}.
\end{align}

 \begin{figure}[t] 
\begin{center}
\includegraphics[width=0.49\textwidth,clip=true]{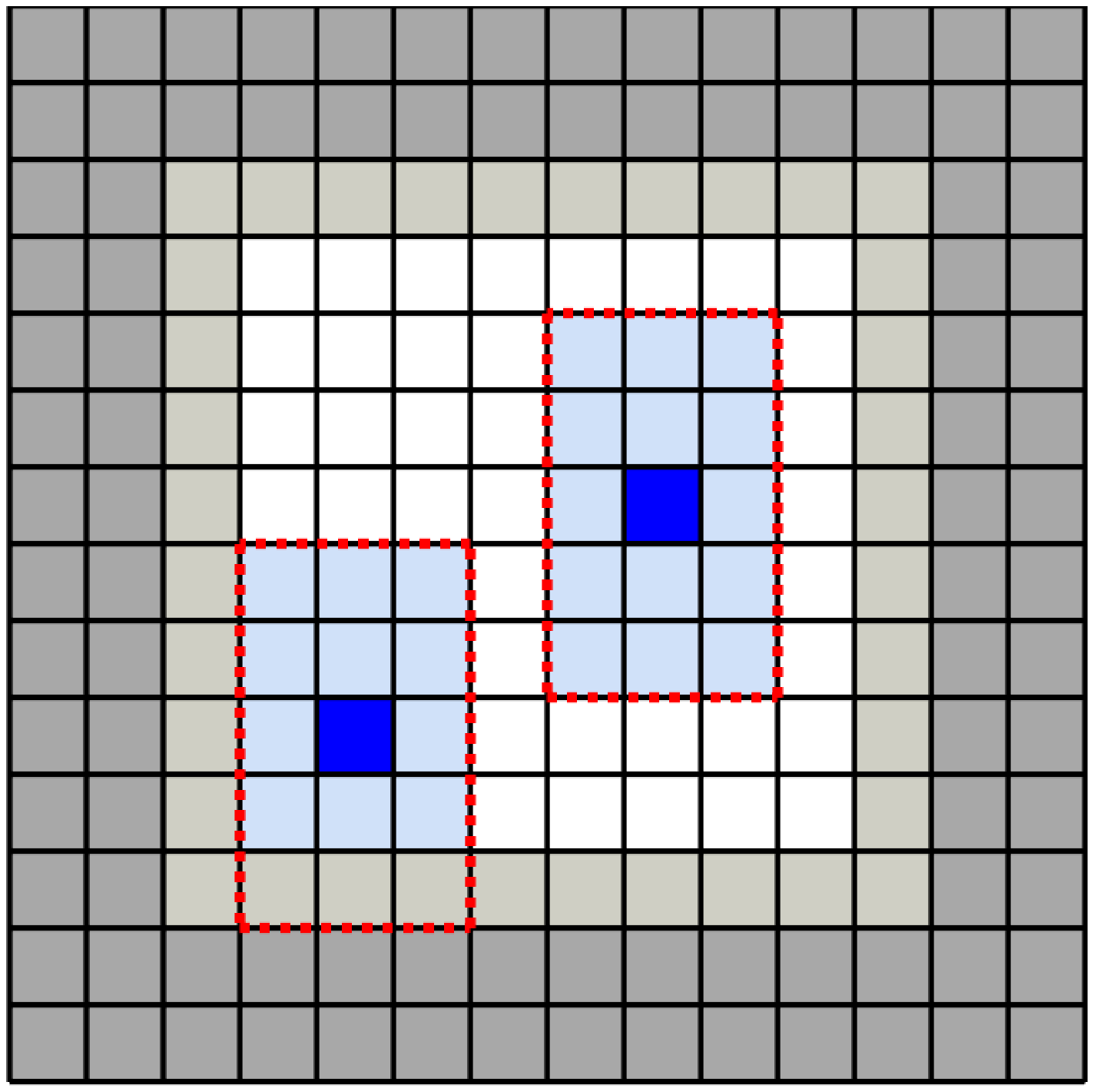}
\includegraphics[width=0.49\textwidth,clip=true]{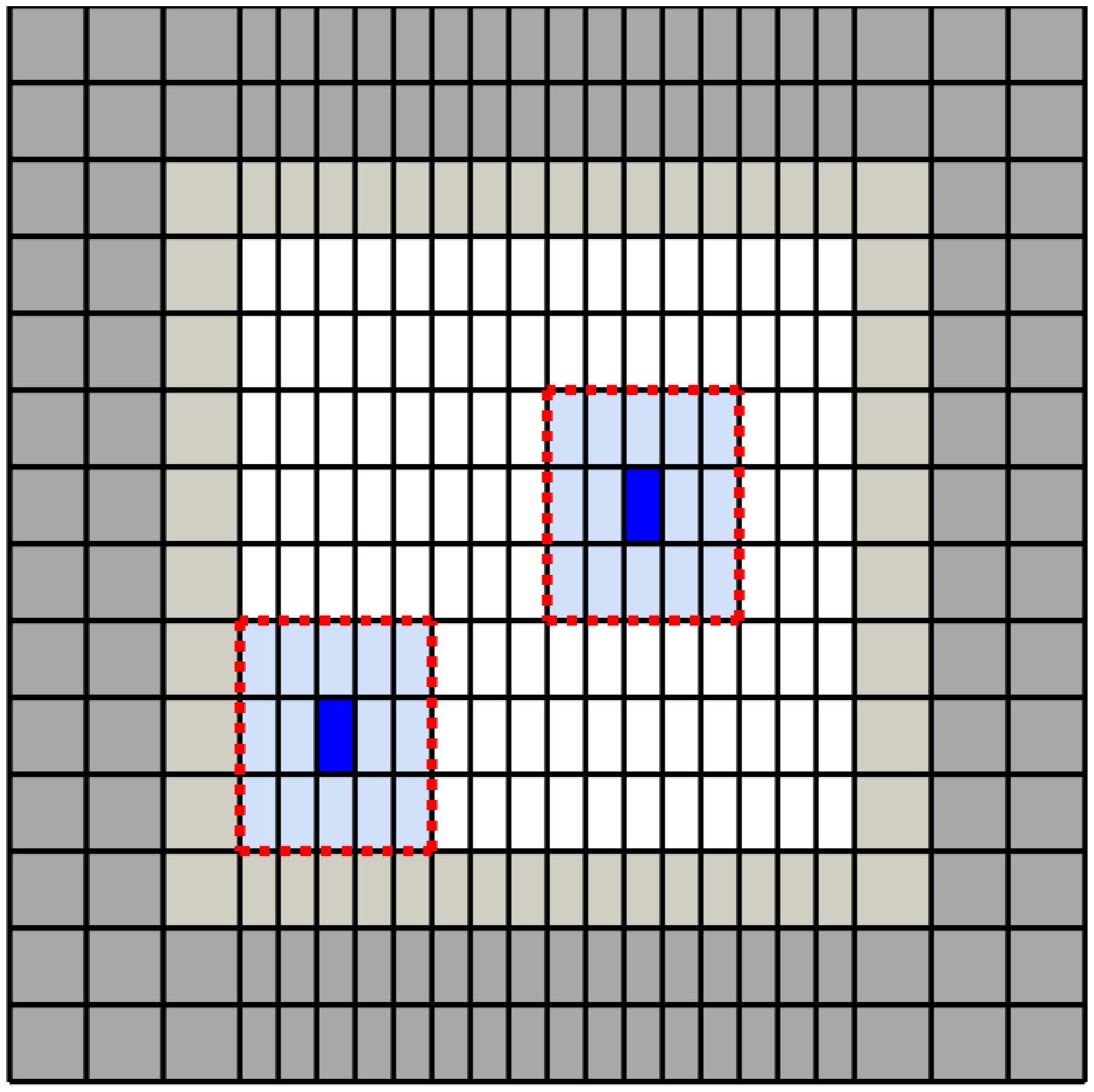}
\end{center}
\caption{\new{An initial T-mesh $\widehat\TT_0^\ex$ \new{in 2D} (left) with $(p_1,p_2)=(3,3)$ and its first uniform refinement (right) are depicted.
The sets $\widehat\Omega$, $\widehat\Omega^\act$, and $\widehat\Omega^\ex$ are highlighted in white, light gray, and dark gray, respectively.
For each of the four blue elements, the corresponding neighbors  are shown in light blue.
According to definition~\eqref{eq:neighbors}, the neighbors are all elements in $\overline{\widehat\Omega}$ with non-empty intersection with the rectangles indicated in red.}}
\label{fig:Tmesh_uni}
\end{figure}

 \begin{figure}[t!] 
\begin{center}
\includegraphics[width=0.25\textwidth,clip=true]{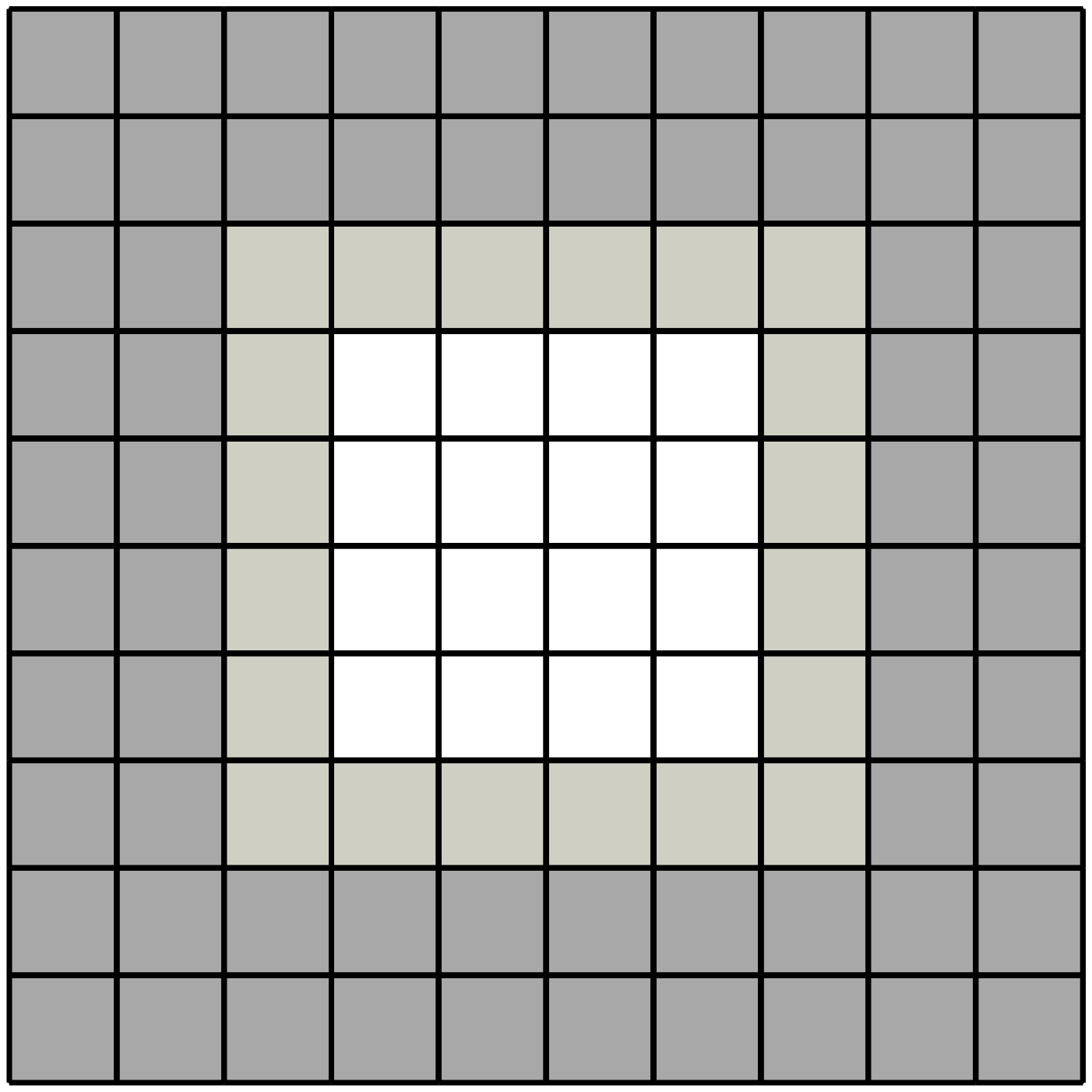}
\quad
\includegraphics[width=0.25\textwidth,clip=true]{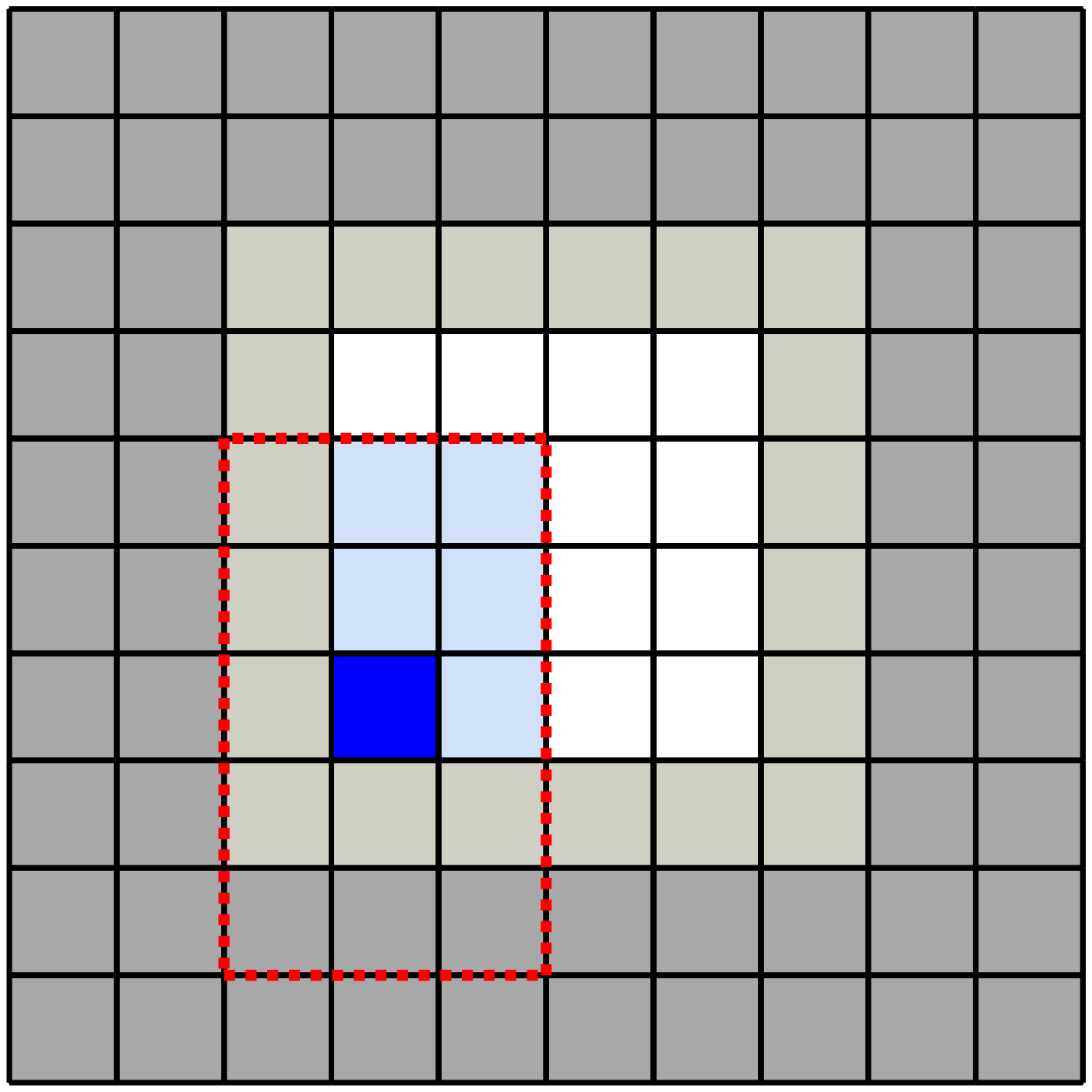}
\quad
\includegraphics[width=0.25\textwidth,clip=true]{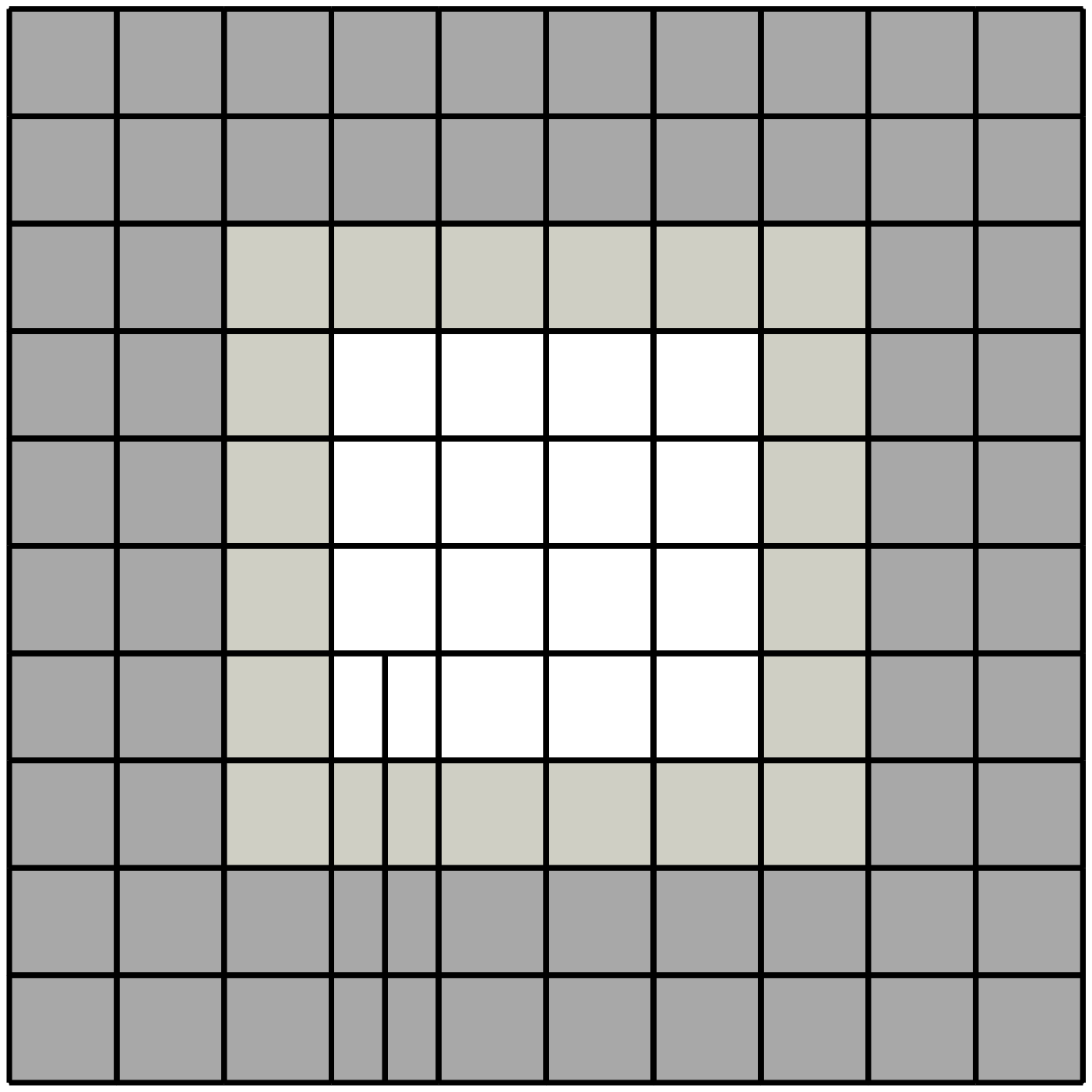}
\\ \vspace{2.5mm}
\includegraphics[width=0.25\textwidth,clip=true]{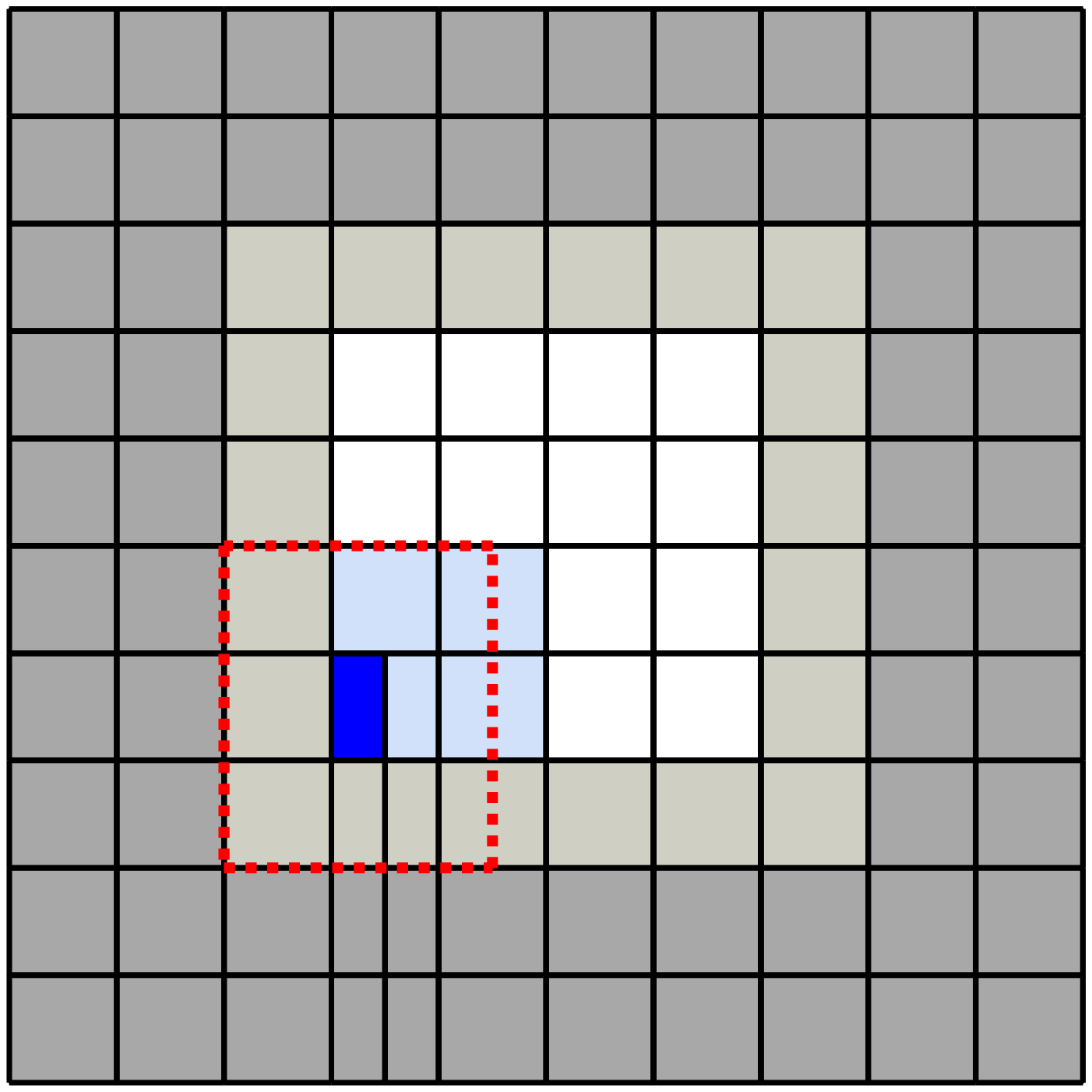}
\quad
\includegraphics[width=0.25\textwidth,clip=true]{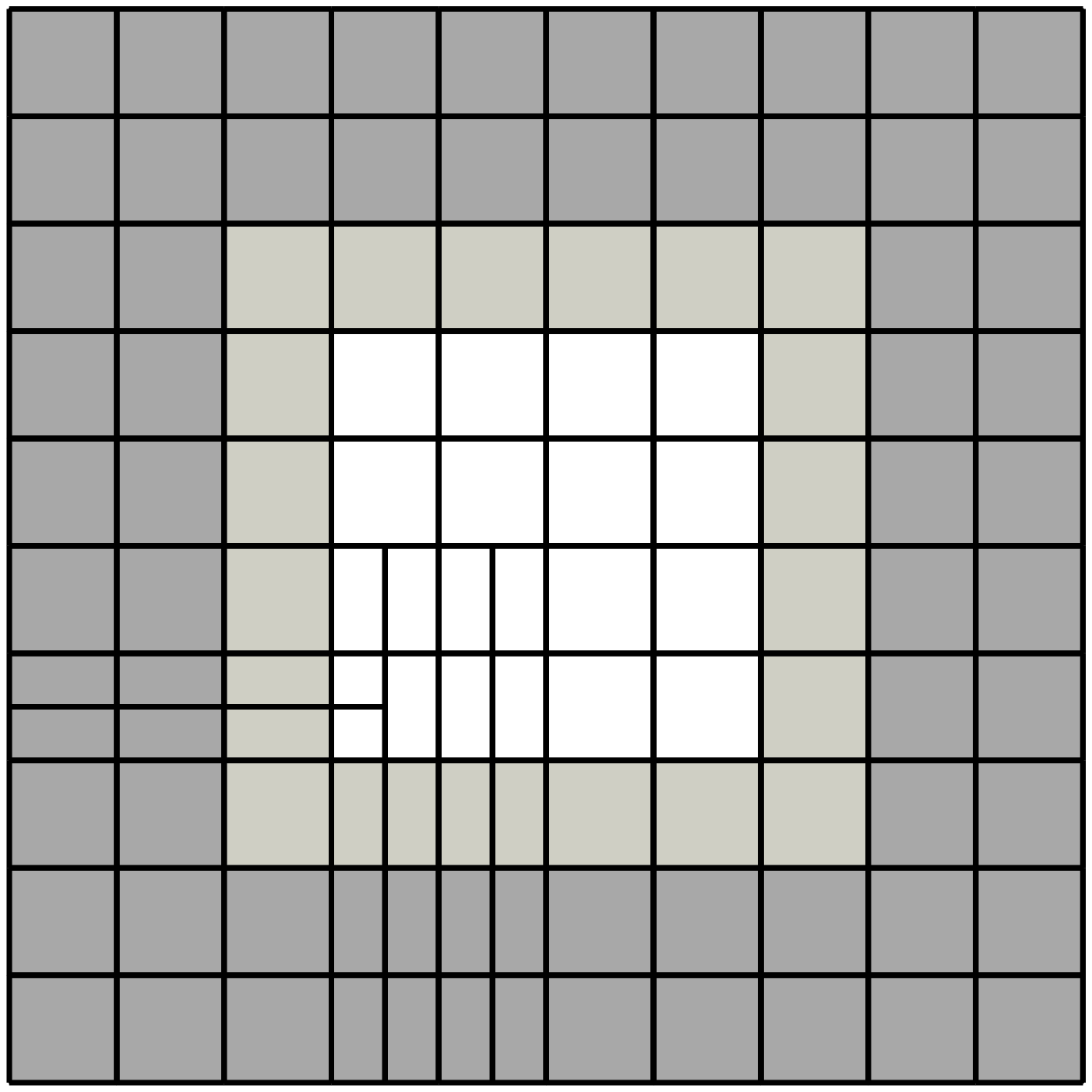}
\quad
\includegraphics[width=0.25\textwidth,clip=true]{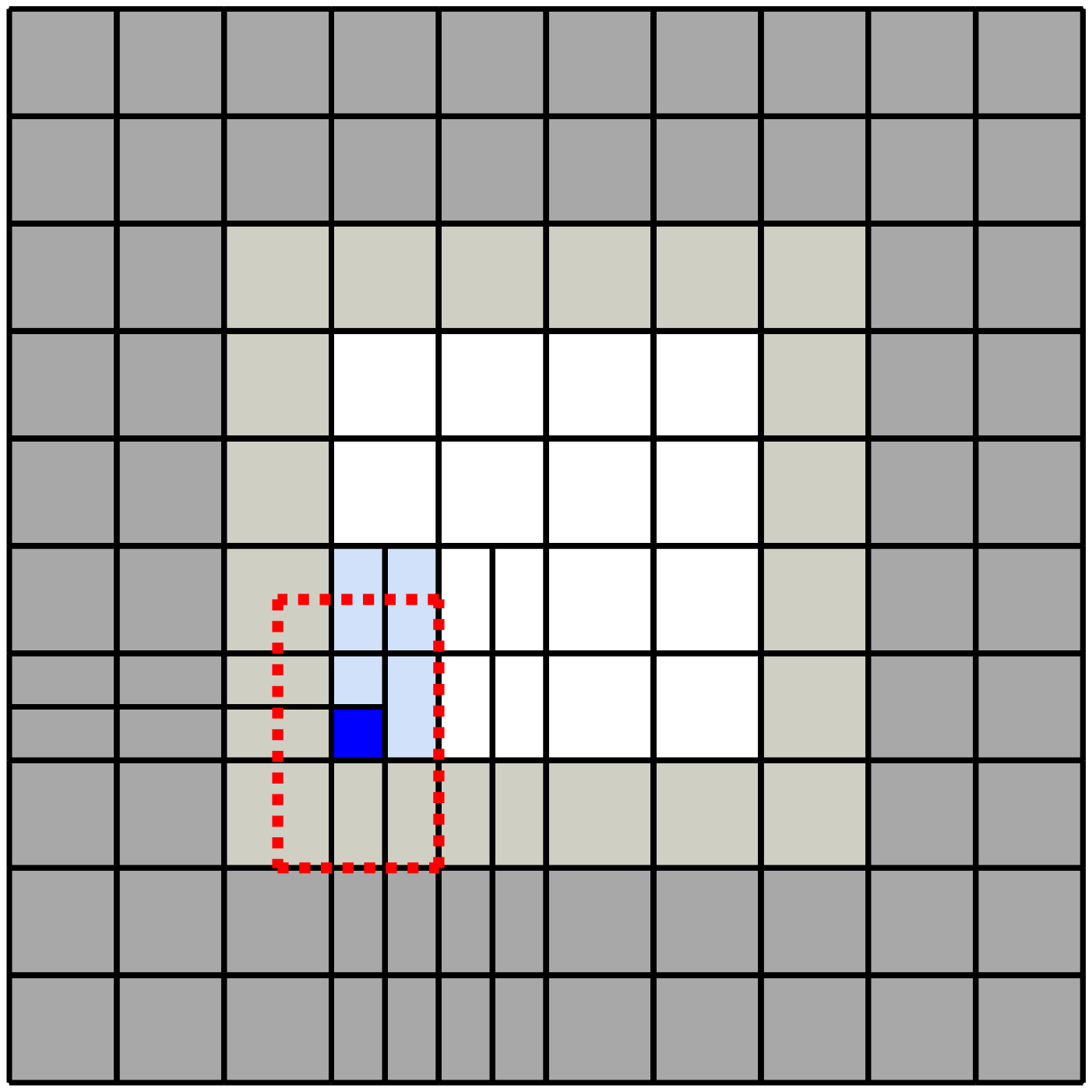}
\\ \vspace{2.5mm}
\includegraphics[width=0.25\textwidth,clip=true]{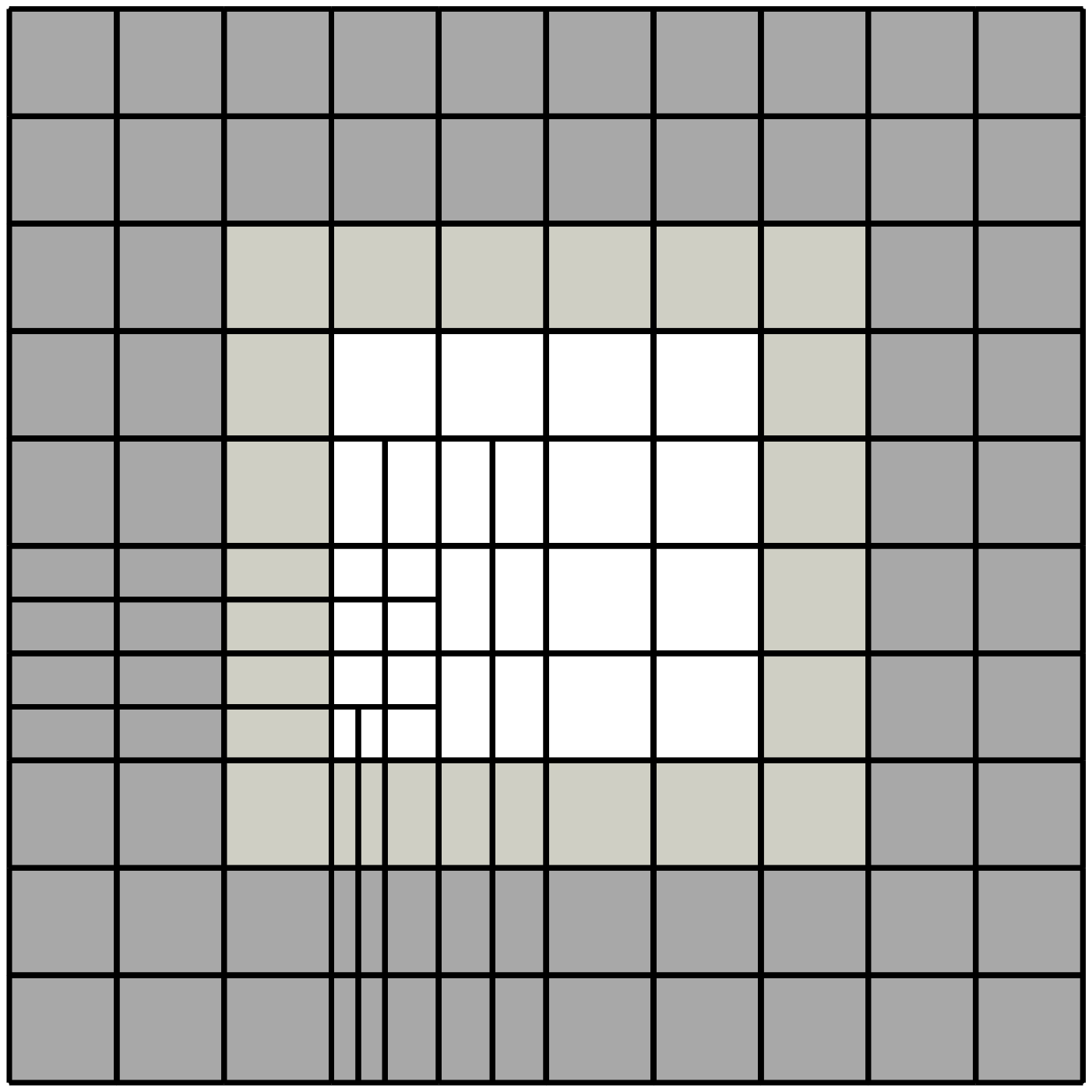}
\quad
\includegraphics[width=0.25\textwidth,clip=true]{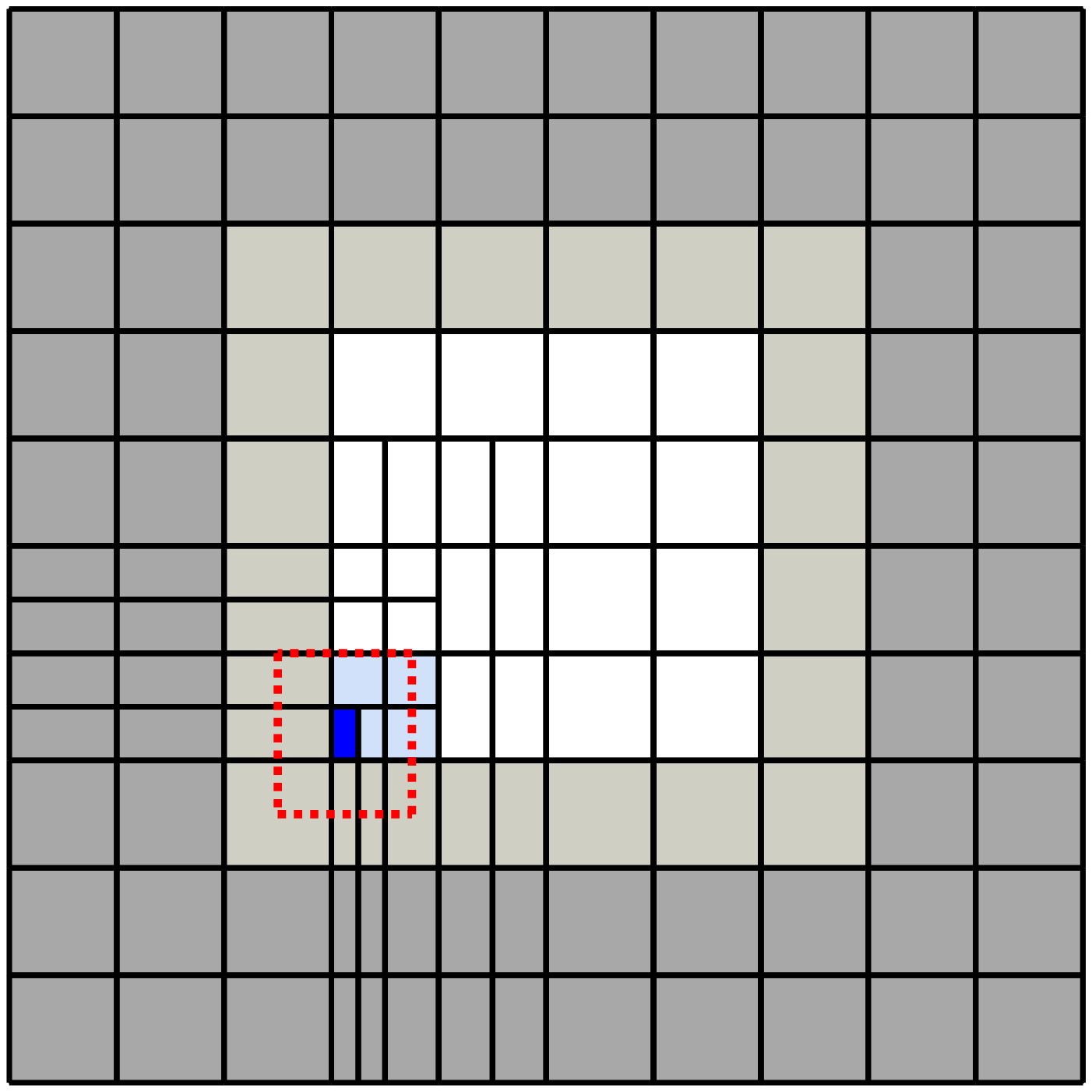}
\quad
\includegraphics[width=0.25\textwidth,clip=true]{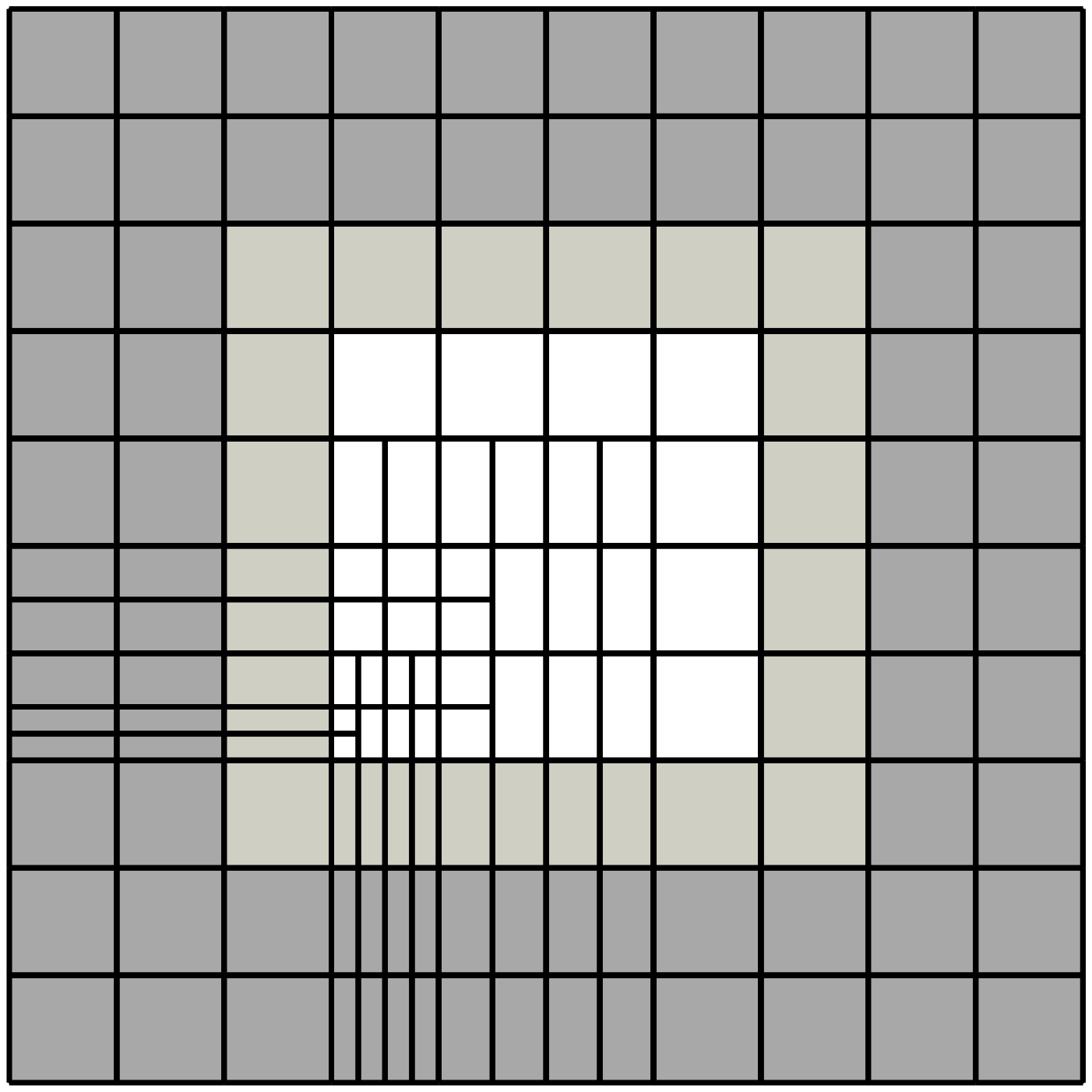}
\\ \vspace{2.5mm}
\includegraphics[width=0.25\textwidth,clip=true]{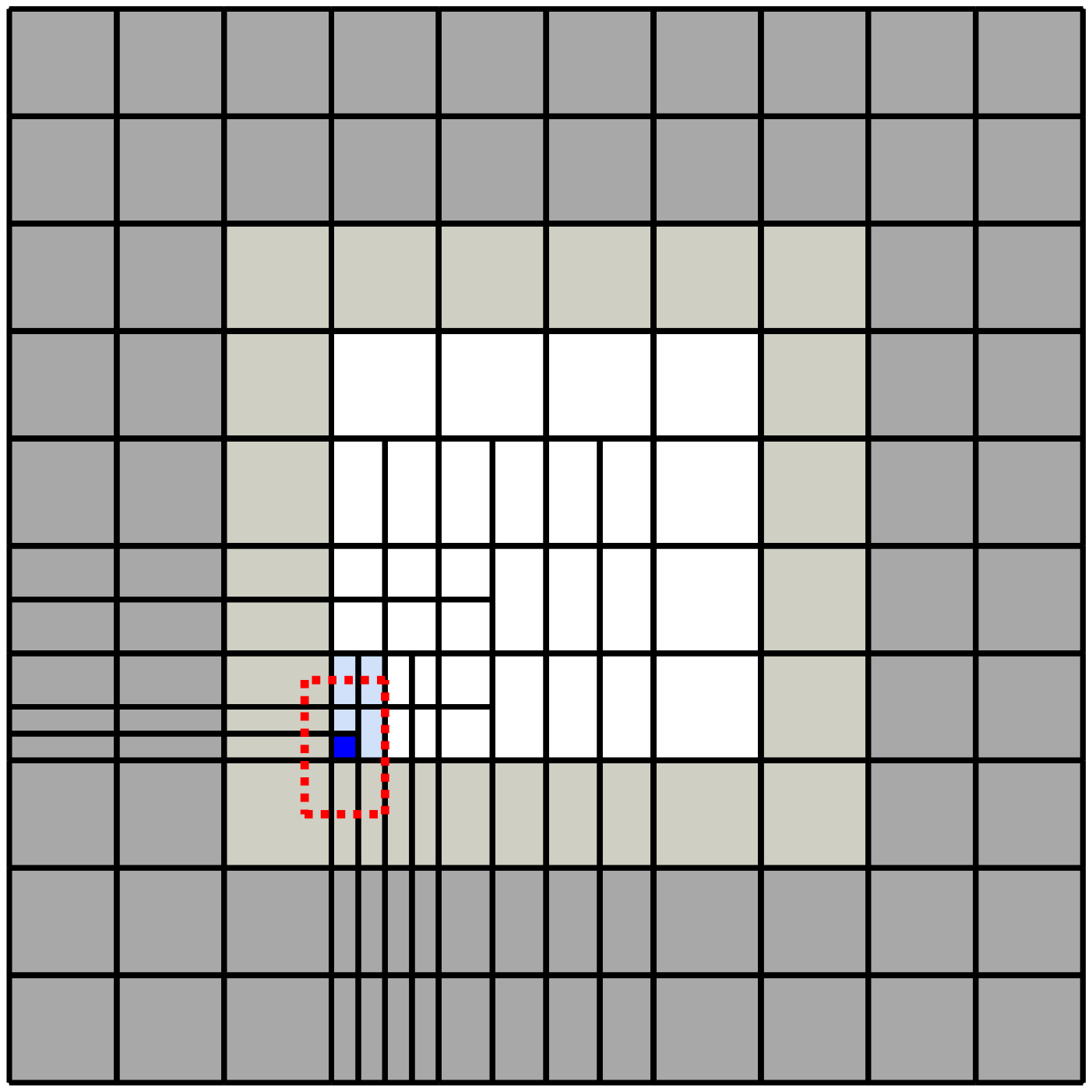}
\quad
\includegraphics[width=0.25\textwidth,clip=true]{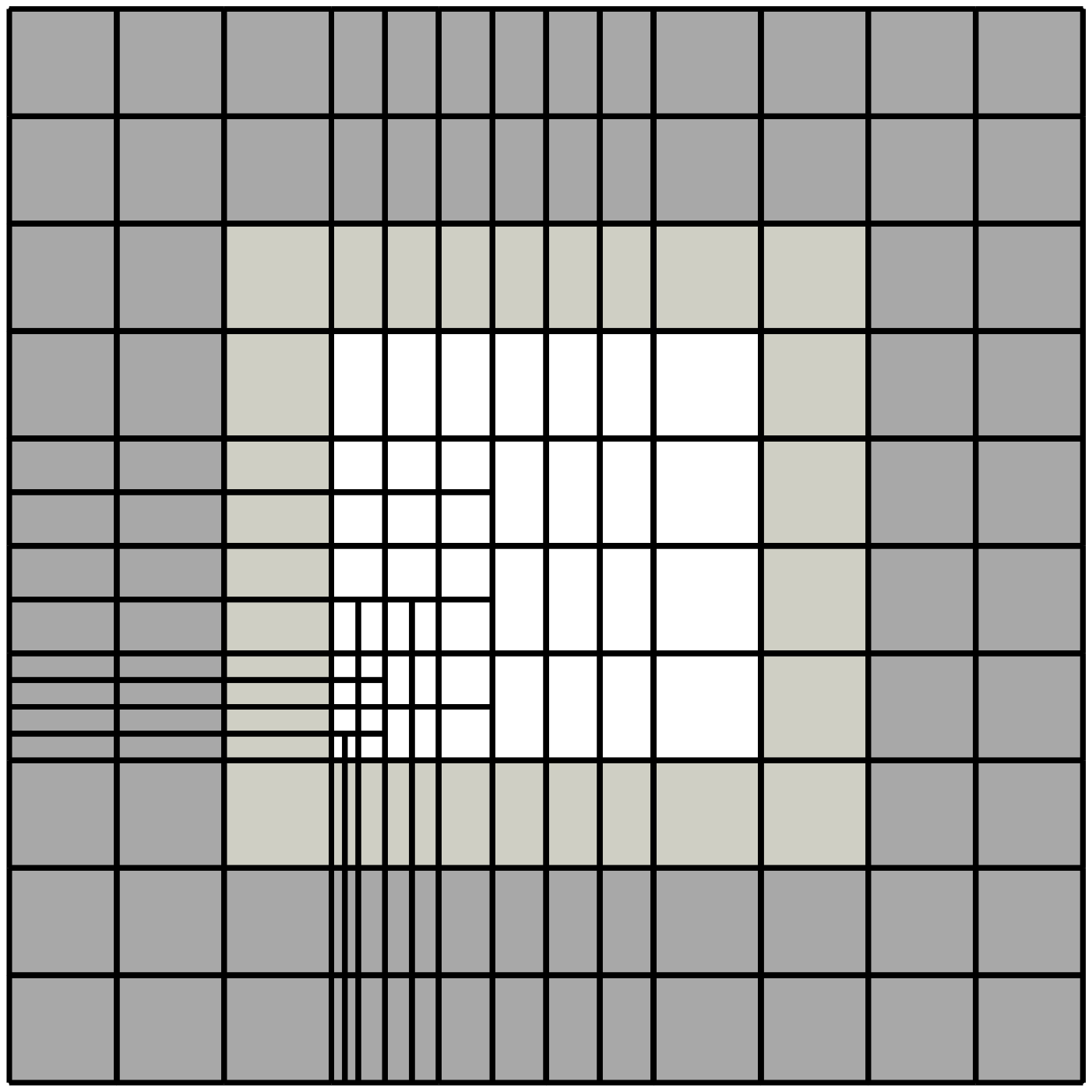}
\quad
\includegraphics[width=0.25\textwidth,clip=true]{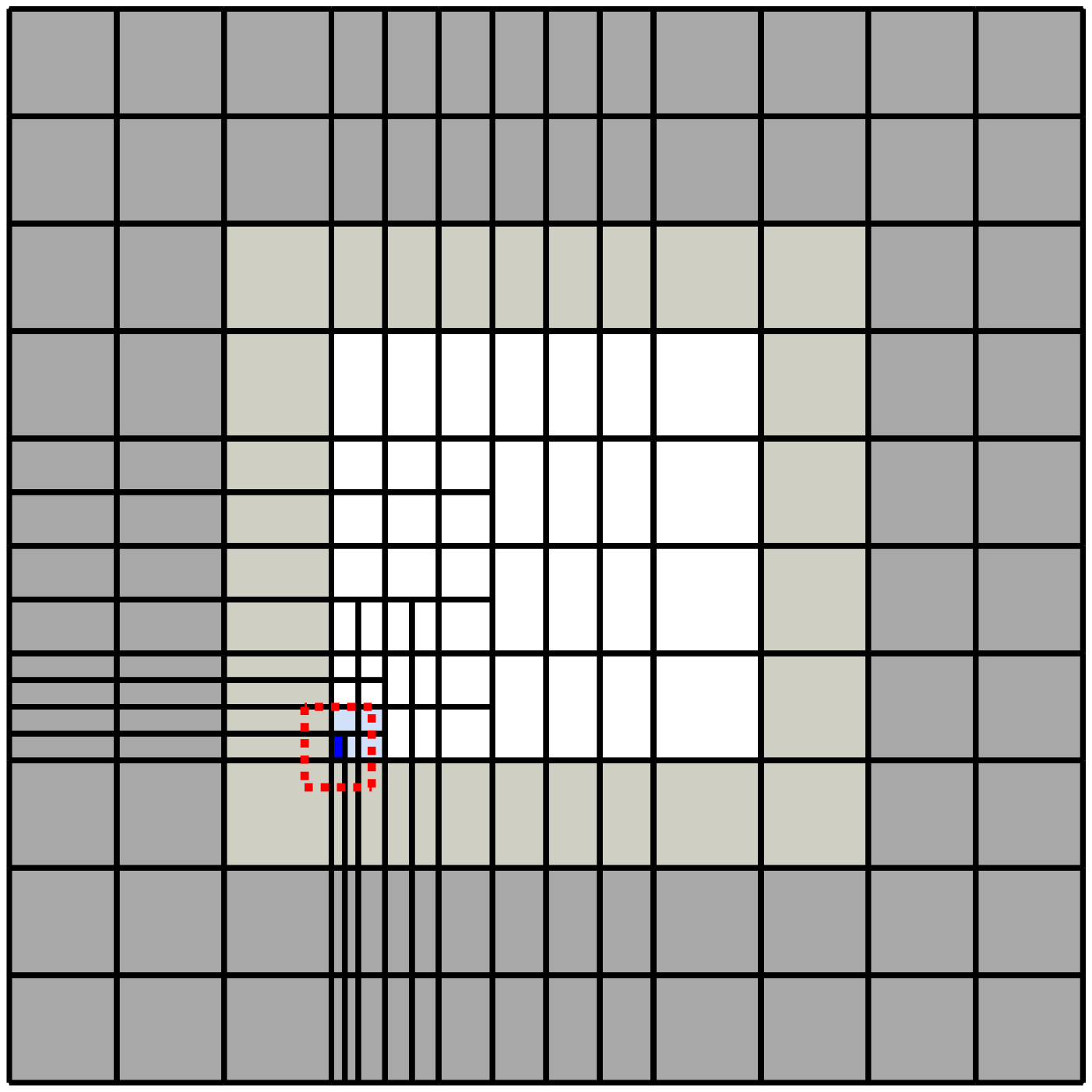}
\end{center}
\caption{\new{An initial T-mesh $\widehat\TT_0^\ex$ in 2D (top left) with $(p_1,p_2)=(3,3)$ and its first five refinements towards the lower left corner of $\widehat\Omega$ are depicted.
The sets $\widehat\Omega$, $\widehat\Omega^\act$, and $\widehat\Omega^\ex$ are highlighted in white, light gray, and dark gray, respectively.
Every second picture shows the element (in blue) that is marked to obtain the next mesh. 
Its neighbors are shown in light blue.
According to definition~\eqref{eq:neighbors}, the neighbors are all elements in $\overline{\widehat\Omega}$ with non-empty intersection with the rectangles indicated in red.}}
\label{fig:Tmeshes}
\end{figure}

 \begin{algorithm}\label{alg:refinement}
{\it\textbf{Input:} T-mesh $\widehat\TT_\bullet$, marked elements $\widehat\MM_\bullet=:\widehat\MM_\bullet^{(0)}\subseteq\widehat\TT_\bullet$.
\begin{itemize}
\item[\rm(i)]  Iterate the following steps {\rm (a)--(b)} for $j=0,1,2,\dots$ until $\widehat\UU_\bullet^{(j)}=\emptyset$:
\begin{itemize}
\item[\rm (a)]Define $\widehat\UU_\bullet^{(j)}:=\bigcup_{\widehat T\in\widehat\MM_\bullet^{(j)}}\boldsymbol{\rm N}^{\rm bad}(\widehat T)\setminus \widehat\MM_\coarse^{(j)}$.
\item[\rm(b)] 
Define  $\widehat\MM_\bullet^{(j+1)}:=\widehat\MM_\bullet^{(j)}\cup\widehat\UU_\bullet^{(j)}$.
\end{itemize}
\item[\rm(ii)] 
Bisect all $\widehat T\in\widehat \MM_\bullet^{(j)}$ via $\bisect_{\level(\widehat T)+1-\underline{\level(\widehat T)}}$ and obtain a finer T-mesh 
\begin{align}\label{eq:bisection}
\refine(\widehat\TT_\bullet,\widehat\MM_\bullet):=\widehat\TT_\coarse\setminus\widehat \MM_\bullet^{(j)}\cup \bigcup\set{\bisect_{\level(\widehat T)+1-\underline{\level(\widehat T)}}(\widehat T)}{\widehat T\in\widehat \MM_\bullet^{(j)}},
\end{align}
where we recall from \eqref{eq:underline} that $\underline{\level(\widehat T)}=\lfloor\level(\widehat T)/d\rfloor d$.
\end{itemize}
\textbf{Output:} Refined mesh $\refine(\widehat\TT_\bullet,\widehat\MM_\bullet)$.}
\end{algorithm}

\new{
\begin{remark}\label{rem:analysis suitable}
The additional bisection of neighbors (and their neighbors, etc.) of marked elements is required to ensure local quasi-uniformity (see~\eqref{eq:shape regular parameter1}--\eqref{eq:shape regular parameter2} below) and \emph{analysis-suitability} in the sense of \cite{beirao,morgensternT2}  for $d=2,3$, respectively.
For $d=2$, the latter is characterized by the assumption that horizontal \emph{T-junction extensions} do not intersect vertical ones. 
In particular, this yields linear independence of the set of B-splines $\set{\widehat B_{\coarse,z}}{z\in\widehat\NN^\act_\coarse}$; see Section~\ref{section:basis}.
\end{remark}}

For any T-mesh $\widehat\TT_\bullet$, we define $\refine(\widehat\TT_\bullet)$ as the set of all T-meshes $\widehat\TT_\circ$ such that there exist T-meshes $\widehat\TT_{(0)},\dots,\widehat\TT_{(J)}$ and marked elements $\widehat\MM_{(0)},\dots,\widehat\MM_{(J-1)}$ with $\widehat\TT_\circ=\widehat\TT_{(J)}=\refine(\widehat\TT_{(J-1)},\widehat\MM_{(J-1)}),\dots,\widehat\TT_{(1)}=\refine(\widehat\TT_{(0)},\widehat\MM_{(0)})$, and $\widehat\TT_{(0)}=\widehat\TT_\bullet$; \new{see Figure~\ref{fig:Tmeshes} for some refined meshes.}
 Here, we formally allow $J=0$, i.e., $\widehat\TT_\coarse\in\refine(\widehat\TT_\coarse)$.
Finally, we define the set of all \textit{admissible T-meshes} as 
\begin{align}
\widehat\T:=\refine(\widehat\TT_0).
\end{align}
For any admissible $\widehat\TT_\coarse\in\widehat\T$, \cite[\new{remark after Definition~2.4} and Lemma~2.14]{morgensternT1} proves for $d=2$ that 
\begin{align}\label{eq:shape regular parameter1}
|\level(\widehat T)-\level(\widehat T')|\le 1\quad\text{for all } \widehat T,\widehat T'\in \widehat \TT_\coarse\text{ with }\widehat T'\in\boldsymbol{\rm N}_\coarse(\widehat T),
\end{align}
as well as  
\begin{align}\label{eq:shape regular parameter2}
\set{\widehat T'\in\widehat\TT_\coarse}{\widehat T\cap\widehat T'\neq\emptyset}\subseteq \boldsymbol{\rm N}_\coarse(\widehat T) \quad\text{for all }\widehat T\in\widehat\TT_\coarse.
\end{align}
Similarly, \cite[Lemma~3.5]{morgensternT2} proves \eqref{eq:shape regular parameter1}--\eqref{eq:shape regular parameter2} for $d=3$.

\new{\begin{remark}
As any element $\widehat T\in\widehat\TT_\coarse\in\T$ of level $k$ is essentially of size $2^{-k/2}$ if $d=2$ and $2^{-k/3}$ if $d=3$, 
the definition of $\boldsymbol{\rm N}_\coarse(\widehat T)$ and \eqref{eq:shape regular parameter1}--\eqref{eq:shape regular parameter2} yield that the number $\#\boldsymbol{\rm N}_\coarse(\widehat T)$ is uniformly bounded independently of the level.
Moreover, for $d=2$, \cite[remark after Definition~2.4]{morgensternT1} states that whenever $\widehat T$ is bisected (in direction $k+1-\underline k$), the resulting sons $\widehat T_1,\widehat T_2$ in the refined mesh $\widehat\TT_\fine$ satisfy that $\boldsymbol{\rm N}_\fine(\widehat T_i)\setminus\{\widehat T_1,\widehat T_2\}\subseteq\boldsymbol{\rm N}_\coarse(\widehat T)\setminus\{\widehat T\}$.
One elementarily sees that this inclusion also holds for $d=3$.
The latter two properties allow for an efficient implementation of Algorithm~\ref{alg:refinement}, where the neighbors of all elements in the current mesh are stored in a suitable data structure and updated after each bisection.
\end{remark}}

\subsection{Basis of $\widehat\XX_\bullet$}\label{section:basis}
First, we emphasize that for general T-meshes $\widehat\TT_\coarse$ as in Section~\ref{subsec:parameter hsplines}, the set $\set{\widehat B_{\coarse,z}}{z\in\widehat\NN^\act_\coarse}$ is not necessarily a basis of the corresponding T-spline space $\widehat\YY_\coarse$ since it is not necessarily linearly independent; see \cite{counter} for a counter example. 
According to \cite[Proposition~7.4]{variational}, a sufficient criterion for linear independence of a set of B-splines is dual-compatibility:
We say that $\set{\widehat B_{\coarse,z}}{z\in\widehat\NN_\coarse}$ is \textit{dual-compatible} if for all $z,z'\in\widehat\NN_\coarse$ with $|\widehat B_{\coarse,z}\cap \widehat B_{\coarse,z'}|>0$, the corresponding local knot vectors are at least in one direction aligned, i.e., there exists $i\in\{1,\dots,d\}$ such that $\widehat\KK^{\rm loc}_{\coarse,i}(z)$ and $\widehat\KK^{\rm loc}_{\coarse,i}(z')$ are both sub-vectors of one common  sorted vector $\widehat\KK$.

We stress that admissible meshes yield dual-compatible B-splines, where the local knot vectors are even aligned in at least two directions for $d=3$, and thus linearly independent B-splines. 
\new{Indeed, \cite[Theorem~3.6]{morgensternT1} and \cite[Theorem~5.3]{morgensternT2} prove analysis-suitability (see Remark~\ref{rem:analysis suitable}) for $d=2$ and $d=3$, respectively.
According to \cite[Theorem~7.16]{variational} for $d=2$ and \cite[Theorem~6.6]{morgensternT2} for $d=3$, this implies the stated dual-compatibility.}
To be precise, 
\cite{morgensternT2} defines the space of T-splines  differently as the span of $\set{\widehat B_{\coarse,z}}{z\in\widehat\NN_\coarse^\act\cap\overline{\widehat\Omega}}$ and shows that this set is dual-compatible. 
The functions in this set are not only zero on the boundary $\partial\widehat\Omega$, but also some of their derivatives vanish there since the maximal multiplicity in the used local knot vectors is at most $p_i$ in each direction; see, e.g.,  \cite[Section~6]{boor}. 
Nevertheless, the proofs immediately generalize to our standard definition of T-splines. 
The following lemma provides a basis of $\widehat\XX_\coarse$.

\begin{lemma}\label{lem:basis of X}
Let $\widehat\TT_\bullet\in\widehat\T$ be an arbitrary admissible T-mesh in the parameter domain $\widehat\Omega$. 
Then, $\set{\widehat B_{\coarse,z}}{z\in\widehat\NN^\act_\coarse\setminus\partial\widehat\Omega^\act}$ is a basis of 
$\widehat\XX_\bullet$.
\end{lemma}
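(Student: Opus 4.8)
The plan is to verify the three defining properties of a basis in turn: that the proposed functions are linearly independent, that each of them lies in $\widehat\XX_\bullet$, and that together they span $\widehat\XX_\bullet$. Linear independence is immediate. Since $\widehat\TT_\bullet$ is admissible, the full family $\set{\widehat B_{\coarse,z}}{z\in\widehat\NN^\act_\coarse}$ is dual-compatible and hence linearly independent (as recalled before the statement), so any subset -- in particular the one indexed by $\widehat\NN^\act_\coarse\setminus\partial\widehat\Omega^\act$ -- is linearly independent as well.

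The core of the argument is the following characterization of boundary vanishing: for $z\in\widehat\NN^\act_\coarse$ one has $\widehat B_{\coarse,z}\in\widehat\XX_\bullet$ if and only if $z\notin\partial\widehat\Omega^\act$. To see this, I would use the tensor-product structure~\eqref{eq:Bz}: the restriction of $\widehat B_{\coarse,z}$ to the face $\{t_i=0\}$ of $\partial\widehat\Omega$ equals the nonnegative factor $\widehat B(0\,|\,\widehat\KK^{\rm loc}_{\coarse,i}(z))$ times the remaining univariate B-splines, which do not vanish identically on the face. By the recalled endpoint property of univariate B-splines, $\widehat B(0\,|\,\widehat\KK^{\rm loc}_{\coarse,i}(z))\neq 0$ exactly when the first $p_i+1$ entries of $\widehat\KK^{\rm loc}_{\coarse,i}(z)$ all equal $0$, which by the clamping~\eqref{eq:local knot vector} forces $z_i$ together with the $(p_i-1)/2$ nearest skeleton intersections to its right in direction $i$ to be $\le 0$. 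Here I would exploit that, by construction of the extension, the frame cells \emph{perpendicular} to each boundary face are unit cubes, so along direction $i$ the frame skeleton sits at integers; combined with $z_i\ge -(p_i-1)/2$ for active nodes this shows the condition holds precisely when $z_i=-(p_i-1)/2$. The symmetric computation at $\{t_i=N_i\}$ yields $z_i=N_i+(p_i-1)/2$, and recalling~\eqref{eq:omega1} this says exactly that $\widehat B_{\coarse,z}$ fails to vanish on $\partial\widehat\Omega$ iff some coordinate of $z$ attains an endpoint of the active region, i.e.\ iff $z\in\partial\widehat\Omega^\act$. This settles the inclusion $\linhull\set{\widehat B_{\coarse,z}}{z\in\widehat\NN^\act_\coarse\setminus\partial\widehat\Omega^\act}\subseteq\widehat\XX_\bullet$ and pins down which coefficients can survive on the boundary.

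For the spanning property, I would take arbitrary $\widehat V_\bullet\in\widehat\XX_\bullet\subseteq\widehat\YY_\bullet$ and expand it as $\widehat V_\bullet=\sum_{z\in\widehat\NN^\act_\coarse}c_z\,\widehat B_{\coarse,z}$. By the characterization above the interior terms vanish on $\partial\widehat\Omega$, so the homogeneous boundary condition reduces to $\sum_{z\in\partial\widehat\Omega^\act}c_z\,\widehat B_{\coarse,z}|_{\partial\widehat\Omega}=0$, and it remains to show $c_z=0$ for every $z\in\partial\widehat\Omega^\act$. I would argue face by face: restricting to the relative interior of a fixed face, say $\{t_i=0\}$ with the other coordinates in the open intervals $(0,N_j)$, only nodes with $z_i=-(p_i-1)/2$ contribute, and the identity becomes a vanishing linear combination of the $(d-1)$-dimensional tensor-product B-splines $\prod_{j\neq i}\widehat B(t_j\,|\,\widehat\KK^{\rm loc}_{\coarse,j}(z))$ with coefficients $c_z\,\widehat B(0\,|\,\widehat\KK^{\rm loc}_{\coarse,i}(z))$. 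As the scalar factor is nonzero, linear independence of these restricted B-splines gives $c_z=0$ for all $z$ with $z_i=-(p_i-1)/2$; ranging over all $2d$ faces then kills every boundary coefficient (each $z\in\partial\widehat\Omega^\act$ has some coordinate at an active endpoint), whence $\widehat V_\bullet$ already lies in the span of the interior B-splines.

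The main obstacle is precisely this last linear independence of the \emph{restricted} families on the faces, since restriction need not preserve independence. For $d=2$ it is harmless: all contributing nodes lie on a single line $t_i=-(p_i-1)/2$, so their univariate knot vectors are windows of one common global knot vector and the associated univariate B-splines are linearly independent by classical spline theory. For $d=3$ I would invoke the stronger dual-compatibility of admissible meshes recalled above, where overlapping local knot vectors are aligned in \emph{at least two} of the three directions. Since at most one direction can fail to be aligned, any pair of face directions $j\neq i$ contains at least one aligned direction, so the restricted two-dimensional family is again dual-compatible and hence linearly independent by \cite[Proposition~7.4]{variational}. This is exactly where the analysis-suitability of the Morgenstern refinement is genuinely used.
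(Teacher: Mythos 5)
Your proposal is correct and follows essentially the same route as the paper's proof: linear independence inherited from dual-compatibility of the full family, the endpoint characterization of which B-splines vanish on $\partial\widehat\Omega$ (the paper's Step~1), and spanning via facet-by-facet restriction, handling $d=2$ through the common global knot vector and $d=3$ through the two-direction alignment that makes the restricted family dual-compatible (the paper's Step~2). The only difference is cosmetic: you spell out the clamping argument behind the boundary characterization and restrict to relative interiors of faces, both of which the paper leaves implicit.
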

\begin{proof}
Since we already know that the set $\set{\widehat B_{\coarse,z}}{z\in\widehat\NN^\act_\coarse\setminus\partial\widehat\Omega^\act}$ is linearly independent, we only have to show that it generates $\widehat\XX_\coarse$.

\noindent\textbf{Step~1:}
\new{We recall} that the \new{one-dimensional} B-spline $\widehat B(\cdot|\new{x}_0,\dots,\new{x}_{p+1})$ induced by a sorted knot vector $(\new{x}_0,\dots,\new{x}_{p+1})\in\R^{p+2}$ is positive on the interval $(\new{x}_0,\new{x}_{p+1})$. 
It does not vanish at $\new{x}_0$  if and only if $\new{x}_0=\dots=\new{x}_{p}$, \new{and it does not vanish at $\new{x}_{p+1}$ if and only if} $\new{x}_1=\dots=\new{x}_{p+1}$. 
In particular, for all $z\in\widehat\NN_\coarse^\act$,  this \new{and the tensor-product structure of $\widehat B_{\coarse,z}$} yield that $\widehat B_{\coarse,z}|_{\partial\widehat\Omega}\neq 0$ if and only if $z\new{\in}\partial\widehat\Omega^\act$; \new{see also Figure~\eqref{fig:Tmesh}.}
This shows  that
\begin{align}\label{eq:proof:basis}
{\rm span}\set{\widehat B_{\coarse,z}}{z\in\widehat\NN^\act_\coarse\setminus\partial\widehat\Omega^\act}\subseteq\widehat\XX_\coarse.
\end{align}

\noindent\textbf{Step~2:}
To see the other inclusion, let $\widehat V_\coarse\in\widehat\XX_\coarse$. 
Then, there exists a representation of the form $\widehat V_\coarse=\sum_{z\in\widehat\NN_\coarse^\act}c_z \widehat B_{\coarse,z}$.
Let $\widehat E$ be an arbitrary facet of the boundary $\partial\widehat\Omega$ and $\widehat E^\act$ its extension onto $\partial\widehat\Omega^\act$, i.e., 
\begin{align*}
\widehat E&:=\prod_{j=1}^{i-1}[0,N_j]\times\{\widehat e\}\times\prod_{j=i+1}^d[0,N_j], \\
\widehat E^\act&:=\prod_{j=1}^{i-1}[-(p_j-1)/2,N_j+(p_j-1)/2]\times\{\widehat e^{\,\act}\}\times\prod_{j=i+1}^d[-(p_j-1)/2,N_j+(p_j-1)/2],
\end{align*}
with $i\in\{1,\dots,d\}$, $\widehat e:=0$ and $\widehat e^{\,\act}:=-(p_i-1)/2$, or $\widehat e:=N_i$  and $\widehat e^{\,\act}:=N_i+(p_i-1)/2$. 
 Restricting onto $\widehat E$ and using the argument from Step~1, we derive that 
 \begin{align*}
0= \widehat V_\coarse|_{\widehat E}=\sum_{z\in\widehat\NN_\coarse^\act}c_z \widehat B_{\coarse,z}|_{\widehat E}=\sum_{z\in\widehat\NN_\coarse^\act\cap \widehat E^\act}c_z \widehat B_{\coarse,z}|_{\widehat E}.
 \end{align*}

 For $d=2$, the set $\set{\widehat B_{\coarse,z}|_{\widehat E}}{z\in\widehat\NN_\coarse^\act\cap \widehat E^\act}$ coincides (up to the domain of definition) with the set of $(d-1)$-dimensional B-splines corresponding to the global knot vector $\widehat\KK_{\coarse,i}^{\rm gl}((0,0))$ if $\widehat e=0$ and\ $\widehat\KK_{\coarse,i}^{\rm gl}((N_1,N_2))$ if $\widehat e=N_i$; see, e.g.,  \cite[Section~2]{boor} for a precise definition of the set of B-splines associated to some global knot vector.
 It is well-known that these functions are linearly independent, wherefore we derive that $c_z=0$ for the corresponding coefficients.

 For $d=3$, the set $\set{\widehat B_{\coarse,z}|_{\widehat E}}{z\in\widehat\NN_\coarse^\act\cap \widehat E^\act}$ coincides (up to the domain of definition) with the set of $(d-1)$-dimensional B-splines corresponding to the $(d-1)$-dimensional T-mesh 
 \begin{align}
 \widehat\TT_\coarse^\ex|_{\widehat E^\ex}:=\Big\{\prod_{\substack{j=1\\j\neq i}}^{i-1}[a_j,b_j]:\prod_{j=1}^{d}[a_j,b_j]\in \widehat\TT^\ex_\coarse\wedge a_i=\widehat e\Big\}.
 \end{align}
We have already mentioned that
\cite[Theorem~5.3 and Theorem~6.6]{morgensternT2} 
 shows that the local knot vectors of the B-spline basis of $\widehat\YY_\coarse$ are even aligned in at least two directions.
In particular,  the knot vectors of the B-splines corresponding to the mesh $ \widehat\TT_\coarse^\ex|_{\widehat E^\ex}$ are aligned in at least one direction.
This yields dual-compatibility and thus linear independence of these B-splines, which concludes that $c_z=0$ for the corresponding coefficients.
 Since $\partial\widehat\Omega^\act$ is the union of all its facets and $\widehat E$ was arbitrary, this concludes that $c_z=0$ for all $z\in\widehat\NN_\coarse^\act\cap\partial\widehat\Omega^\act$ and thus the other inclusion in~\eqref{eq:proof:basis}.
\end{proof}

Finally, we study the support of the basis functions of $\widehat\YY_\coarse$ (and thus of $\widehat\XX_\coarse$).
To this end, we define for  $\widehat\TT_\coarse\in\widehat\T$ and $\widehat\omega\subseteq\overline{\widehat{\Omega}}$, the patches of order $k\in\N$ inductively by
\begin{align}\label{eq:patch}
 \pi_\coarse^0(\widehat\omega) := \widehat\omega,
 \quad 
 \pi_\coarse^k(\widehat\omega) := \bigcup\set{\widehat T\in\widehat\TT_\coarse}{ {\widehat T}\cap \pi_\coarse^{k-1}(\widehat\omega)\neq \emptyset}.
\end{align}

\begin{lemma}\label{lem:local support}
Let $\widehat\TT_\coarse\in\widehat\T$, $\widehat T\in\widehat\TT_\coarse$, and $z\in\widehat\NN_\coarse^\act$ with $|\widehat T\cap\supp(\widehat B_{\coarse,z})|>0$.
Then, there exists \new{a uniform} $k_{\rm supp}\in\N_0$ such that $\supp(\widehat B_{\coarse,z})\subseteq \pi_\coarse^{k_{\rm supp}}(\widehat T)$.
Moreover, there exist only $k_{\rm supp}$ nodes $z'\in\widehat\NN_\coarse^\act$ such that $|\widehat T\cap\supp(\widehat B_{\coarse,z'})|>0$. 
The constant $k_{\rm supp}$ depends only on $d$ and $(p_1,\dots,p_d)$.
\end{lemma}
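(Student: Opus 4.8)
The plan is to reduce both assertions to a single metric estimate: whenever $\supp(\widehat B_{\coarse,z})$ meets $\widehat T$ in positive measure, its (sup-)diameter is at most $C\,\diam(\widehat T)$ with $C=C(d,p_1,\dots,p_d)$. First I would recall that, by the tensor-product definition~\eqref{eq:Bz} and the fact that $\widehat B(\cdot|(x_0,\dots,x_{p_i+1}))$ is positive exactly on $(x_0,x_{p_i+1})$, the support is the box
\[
\supp(\widehat B_{\coarse,z})=\prod_{i=1}^d\big[\,\kappa_{i,0},\kappa_{i,p_i+1}\,\big],
\]
where $\kappa_{i,0}\le\dots\le\kappa_{i,p_i+1}$ are the entries of the clipped local knot vector $\widehat\KK^{\rm loc}_{\coarse,i}(z)$. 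Since $z_i$ is the middle entry of $\widehat\II^{\rm loc}_{\coarse,i}(z)$, this box reaches at most $(p_i+1)/2$ consecutive entries of the global index vector to either side of $z_i$ in direction $i$, and the clipping $\max(\min(\cdot,N_i),0)$ only shrinks it, so it suffices to bound the unclipped extent inside $\overline{\widehat\Omega}$.

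Second, I would identify each gap between two consecutive entries of $\widehat\II^{\rm gl}_{\coarse,i}(z)$ lying in $[0,N_i]$ with the $i$-width of a single element of $\widehat\TT_\coarse$ crossed by the line through $z$ in direction $i$: consecutive crossed elements share a face perpendicular to direction $i$, hence intersect, hence are neighbors by~\eqref{eq:shape regular parameter2}, so their levels differ by at most one by~\eqref{eq:shape regular parameter1}. As $\supp(\widehat B_{\coarse,z})$ meets $\widehat T$, the element $\widehat T$ itself is one of these crossed elements, and there are at most $p_i+1$ of them; hence all of them have level within $p_i+1$ of $k:=\level(\widehat T)$. Using that an element of level $k$ has size $\simeq 2^{-k/2}$ for $d=2$ and $\simeq 2^{-k/3}$ for $d=3$, every gap is $\lesssim\diam(\widehat T)$, and summing the at most $p_i+1$ gaps in each direction yields $\diam(\supp(\widehat B_{\coarse,z}))\le C\,\diam(\widehat T)$ with the asserted dependence of $C$.

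Third, I would turn this into the patch statement. Each application of $\pi_\coarse(\cdot)$ adds a full layer of elements, and by local quasi-uniformity every element touching the current region has size $\gtrsim\diam(\widehat T)$; consequently $\pi_\coarse^m(\widehat T)$ contains the $c\,m\,\diam(\widehat T)$-neighbourhood of $\widehat T$ for some $c>0$. Choosing $k_{\rm supp}:=\lceil C/c\rceil$, which depends only on $d$ and $(p_1,\dots,p_d)$, gives $\supp(\widehat B_{\coarse,z})\subseteq\pi_\coarse^{k_{\rm supp}}(\widehat T)$, which is the first claim. For the second claim I would use the same diameter bound in reverse: if $|\widehat T\cap\supp(\widehat B_{\coarse,z'})|>0$, then $z'$, being the middle knot of its own local knot vector, lies within sup-distance $C\,\diam(\widehat T)$ of $\widehat T$; since elements of $\widehat\TT_\coarse^\ex$ in this neighbourhood have size $\simeq\diam(\widehat T)$, it contains only boundedly many vertices, whence only boundedly many such nodes $z'$ (after enlarging $k_{\rm supp}$ to serve also as this bound). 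Equivalently, $|\widehat T\cap\supp(\widehat B_{\coarse,z'})|>0$ is the same as $\widehat B_{\coarse,z'}|_{\widehat T}\not\equiv0$, and by the linear independence of $\set{\widehat B_{\coarse,z'}}{z'\in\widehat\NN^\act_\coarse}$ established for admissible meshes together with the local tensor-product structure of dual-compatible T-splines, at most $\prod_{i=1}^d(p_i+1)$ of them are nonzero on the single element $\widehat T$.

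I expect the main obstacle to be the uniform (level-independent) metric estimate of the second step: translating the purely combinatorial knot-vector construction into a geometric bound in terms of $\diam(\widehat T)$. This hinges entirely on the level-difference bound~\eqref{eq:shape regular parameter1} for admissible meshes, which prevents the element sizes crossed by the line through $z$ from degenerating across the fixed number $p_i+1$ of knot spans; a secondary care is needed so that the frame region $\overline{\widehat\Omega^\ex}\setminus\overline{\widehat\Omega}$ inherits the same local quasi-uniformity, which is what makes the node count in the second claim uniform even when $z'$ lies in the frame.
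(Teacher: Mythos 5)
The central step of your argument fails: from $|\widehat T\cap\supp(\widehat B_{\coarse,z})|>0$ you conclude that ``the element $\widehat T$ itself is one of these crossed elements''. That is not true. The support of $\widehat B_{\coarse,z}$ is a $d$-dimensional box, while the elements crossed by the lines through $z$ form only a lower-dimensional ``cross'' through $z$ (the red lines in Figure~\ref{fig:Tmesh}); an element $\widehat T$ can meet the box near one of its corners without meeting any of these lines. Your crossed-element/quasi-uniformity argument therefore only proves $\diam\big(\supp(\widehat B_{\coarse,z})\big)\lesssim\diam(\widehat T_z)$ for the element $\widehat T_z\in\widehat\TT_\coarse$ containing $z$; it gives no control of $\level(\widehat T)$ in terms of $\level(\widehat T_z)$ for an arbitrary element $\widehat T$ meeting the support, and this comparability is precisely the crux of the lemma. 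Nor can it be recovered by your patch-growth argument in the third step: \eqref{eq:shape regular parameter1}--\eqref{eq:shape regular parameter2} only couple levels of nearby elements, so the thickness of successive patch layers around a (potentially much finer) $\widehat T$ may decay geometrically, and the claim that $\pi_\coarse^{m}(\widehat T)$ contains a $c\,m\,\diam(\widehat T)$-neighbourhood of $\widehat T$ is unjustified --- nothing you have shown excludes elements of arbitrarily high level inside the support box but away from the cross through $z$. The same unproven comparability is used again in your node count (``elements in this neighbourhood have size $\simeq\diam(\widehat T)$''), and the alternative count ``at most $\prod_{i}(p_i+1)$ basis functions are nonzero on $\widehat T$'' is for T-splines a statement of exactly the kind the lemma is meant to establish, not a formal consequence of linear independence.

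The paper closes this gap with an ingredient your proof never invokes: dual-compatibility of the B-splines on admissible meshes (a consequence of analysis-suitability; see Remark~\ref{rem:analysis suitable} and Section~\ref{section:basis}). Given $\widehat T$ with $|\widehat T\cap\supp(\widehat B_{\coarse,z})|>0$, one picks a vertex $z_{\widehat T}\in\widehat\NN_\coarse^\act$ of $\widehat T$, so that $\widehat T\subseteq\supp(\widehat B_{\coarse,z_{\widehat T}})$ and the two supports overlap. Dual-compatibility forces the local knot vectors of $\widehat B_{\coarse,z}$ and $\widehat B_{\coarse,z_{\widehat T}}$ to be aligned in some direction; since their knot spacings are $\simeq 2^{-\level(\widehat T_z)/d}$ and $\simeq 2^{-\level(\widehat T)/d}$, respectively (this is where your crossed-element argument legitimately enters), the alignment yields $\level(\widehat T)\simeq\level(\widehat T_z)$ and hence $|\widehat T|\simeq|\supp(\widehat B_{\coarse,z})|$. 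Only then do the patch containment and the node count follow along the lines you outline; the paper moreover handles nodes in the frame region by projecting them onto $\overline{\widehat\Omega}$ and bounding the number of preimages, a point you mention only in passing. Without dual-compatibility, or an equally strong structural property of admissible T-meshes, your second step does not yield the claim.
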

\begin{proof}
We prove the assertion in two steps.

\noindent\textbf{Step~1:}
We prove the first assertion.
Without loss of generality, we can assume that $z\in\overline{\widehat\Omega}$, since otherwise there exists $z''\in\widehat\NN_\coarse^\act\cap\overline{\widehat\Omega}$ \new{(namely the projection of $z$ into $\overline{\widehat\Omega}$)} such that $\supp(\widehat B_{\coarse,z})\subseteq\supp(\widehat B_{\coarse,z''})$ \new{(see~\eqref{eq:local knot vector})} and the assertion for $z''$ yields that $\supp(\widehat B_{\coarse,z})\subseteq\pi_\coarse^{k_{\rm supp}}(\widehat T)$. 
Obviously, the support of $\widehat B_{\coarse,z}$ can be covered by elements in $\widehat\TT_\coarse$, i.e., $\supp(\widehat B_{\coarse,z})\subseteq \bigcup\widehat\TT_\coarse=\overline{\widehat\Omega}$.
We show that $|\widehat T|\simeq|\supp(\widehat B_{\coarse,z})|$.
Let $\widehat T_z\in\widehat\TT_\coarse$ with $z\in\widehat T_z$ and thus $\widehat T_z\subseteq \supp(\widehat B_{\coarse,z})$.
Then, \eqref{eq:shape regular parameter1}--\eqref{eq:shape regular parameter2} and the definition of $\widehat B_{\coarse,z}$ show that 
\begin{align}\label{eq:Tz supp}
|\widehat T_z|\simeq|\supp(\widehat B_{\coarse,z})|.
\end{align}
Now, let $z_{\widehat T}\in \widehat\NN_\coarse^\act$ with $z_{\widehat T}\in\widehat T$  and thus $\widehat T\subseteq \supp(\widehat B_{\coarse,z_{\widehat T}})$. 
Then, we have that $|\supp(\widehat B_{\coarse,z})\cap\supp(\widehat B_{\coarse,z_{\widehat T}})|>0$.
Since $\widehat\TT_\coarse$ yields dual-compatible B-splines, the knot lines of $\widehat B_{\coarse,z}$ and $\widehat B_{\coarse,z_{\widehat T}}$ are aligned in one direction.
\new{Moreover, due to~\eqref{eq:shape regular parameter1}--\eqref{eq:shape regular parameter2}, the  difference between consecutive knot lines is equivalent to $2^{-\level(\widehat T_z)/2}$ and $2^{-\level(\widehat T)/2}$, respectively.}
\new{Thus, we obtain} that $\level(\widehat T_z)\simeq\level(\widehat T)$ and $|\widehat T_z|\simeq|\widehat T|$.
In combination with \eqref{eq:Tz supp}, we derive that $|\widehat T|\simeq|\supp(\widehat B_{\coarse,z})|$. 
Since $\widehat T$ is arbitrary and $\supp(\widehat B_{\coarse,z})$ is connected, this   yields the existence of $k_{\rm supp}'\in\N_0$ with $\supp(\widehat B_{\coarse,z})\subseteq \pi_\coarse^{k_{\rm supp}'}(\widehat T)$. 

\noindent\textbf{Step~2:}
We prove the second assertion. 
First, let $z'\in\widehat\NN_\coarse^\act\cap\overline{\widehat\Omega}$. 
Then, Step~1 gives that $z'\in\supp(\widehat B_{\coarse,z'})\subseteq \pi_\coarse^{k_{\rm supp}}(\widehat T)$.
Therefore, we see that the number of such $z'$ is bounded by $\#(\widehat\NN_\coarse^\act\cap\overline{\widehat\Omega}\cap\pi_\coarse^{k_{\rm supp}}(\widehat T))$.
If $z'\in\widehat\NN_\coarse^\act\setminus\overline{\widehat\Omega}$, there exists $z''\in\widehat\NN_\coarse^\act\cap\overline{\widehat\Omega}$ \new{(namely the projection of $z'$ into $\overline{\widehat\Omega}$)} \new{such that} $\supp(\widehat B_{\coarse,\new{z'}})\subseteq\supp(\widehat B_{\coarse,z''})$ \new{and thus $|\supp(\widehat T\cap \widehat B_{\coarse,z''})|>0$}.
On the other hand, for given $z''\in\widehat\NN_\coarse^\act\cap\overline{\widehat\Omega}$, the number of $z'\in\widehat\NN_\coarse^\act\setminus\overline{\widehat\Omega}$ with $\supp(\widehat B_{\coarse,\new{z'}})\subseteq\supp(\widehat B_{\coarse,z''})$ is uniformly bounded by some constant $C>0$ depending only on $d$ and $(p_1,\dots,p_d)$; \new{see also Figure~\ref{fig:Tmesh}.}
Altogether, we see that the number of $z'\in \widehat\NN_\coarse^\act$ with $|\supp(\widehat B_{\coarse,z'})\cap\widehat T|>0$ is bounded by $(1+C)\#(\widehat\NN_\coarse^\act\cap\overline{\widehat\Omega}\cap\pi_\coarse^{k_{\rm supp}}(\widehat T))$.
Due to \eqref{eq:shape regular parameter1}--\eqref{eq:shape regular parameter2}, this term is bounded by some uniform constant $k_{\rm supp}''\in\N_0$.
Finally, we set $k_{\rm supp}:=\max(k_{\rm supp}',k_{\rm supp}'')$.
\end{proof}

\subsection{T-meshes and splines in the physical domain $\bold{\Omega}$}\label{subsec:physical hsplines}
To transform the definitions in the parameter domain $\widehat\Omega$ to the physical domain $\Omega$, we assume as in \cite[Section~3.6]{igafem} that we are given a bi-Lipschitz continuous piecewise $C^2$ parametrization
\begin{align}
\gamma:\overline{\widehat{\Omega}}\to\overline\Omega \quad\text{with}\quad  \gamma\in W^{1,\infty}({\widehat\Omega})\cap C^2(\widehat\TT_0)\quad\text{and}\quad\gamma^{-1}\in W^{1,\infty}(\Omega)\cap C^2(\TT_0),
\end{align}
where $C^2(\widehat\TT_0):=\set{v:\overline\Omega\to\R}{v|_{\widehat T}\in C^2(\widehat T)\text{ for all }\widehat T\in\widehat\TT_0}$ and $C^2(\TT_0):=\set{v:\overline\Omega\to\R}{v|_T\in C^2(T)\text{ for all }T\in\TT_0}$.
Consequently, there exists
$C_{\gamma}>0$ such that for all 
$i,j,k\in\{1,\dots,d\}$
\begin{align}\label{eq:Cgamma}
\begin{split}
\Big\|\frac{\partial}{\partial t_j}\gamma_i\Big\|_{L^\infty(\widehat\Omega)}\le C_\gamma,\quad \Big\|\frac{\partial}{\partial x_j}(\gamma^{-1})_i\Big\|_{L^\infty(\Omega)}\le C_\gamma,\\
\Big\|\frac{\partial^2}{\partial t_j\partial t_k }\gamma_i\Big\|_{L^\infty(\widehat\Omega)}\le C_\gamma,\quad \Big\|\frac{\partial^2}{\partial x_j\partial x_k }(\gamma^{-1})_i\Big\|_{L^\infty(\Omega)}\le C_\gamma,
\end{split}
\end{align}
where $\gamma_i$ resp.~ $(\gamma^{-1})_i$ denotes the $i$-th component of $\gamma$ resp.~ $\gamma^{-1}$ and any second derivative is meant $\TT_0$-elementwise.
All previous definitions can now also be made in the physical domain, just by pulling them from the parameter domain via the diffeomorphism $\gamma$.
For these definitions, we drop the symbol~$\widehat\cdot$.
 Given $\widehat\TT_\bullet\in\widehat\T$,  the corresponding mesh in the physical domain reads  $\TT_\bullet:=\set{\gamma(\widehat T)}{\widehat T\in\widehat\TT_\bullet}$.
In particular, we have that $\TT_0=\set{\gamma(\widehat T)}{\widehat T\in\widehat\TT_0}$.
Moreover, let  $\T:=\set{\TT_\bullet}{\widehat\TT_\bullet\in\widehat\T}$ be the  set of  admissible meshes in the physical domain.
If now $\MM_\bullet\subseteq\TT_\bullet$ with $\TT_\bullet\in\T$, we abbreviate $\widehat\MM_\bullet:=\set{\gamma^{-1}(T)}{T\in\MM_\bullet}$ and define $\refine(\TT_\bullet,\MM_\bullet):=\set{\gamma(\widehat T)}{\widehat T \in\refine(\widehat \TT_\bullet,\widehat\MM_\bullet)}$.
For $\TT_\bullet\in\T$, let  $\YY_\bullet:=\set{\widehat V_\bullet\circ\gamma^{-1}}{\widehat V_\bullet\in\widehat\YY_\bullet}$   be the corresponding space of T-splines, and $\XX_\bullet:=\set{\widehat V_\bullet\circ\gamma^{-1}}{\widehat V_\bullet\in\widehat\XX_\bullet}$   the corresponding space of T-splines which vanish on the boundary.
By  regularity of $\gamma$, we especially obtain that
\begin{align}
\XX_\bullet\subset \set{v\in H_0^1(\Omega)}{v|_T\in  H^2(T)\text{ for all }T\in\TT_\bullet}.
\end{align}
Let $U_\coarse\in\XX_\coarse$ be the corresponding Galerkin approximation to the solution $u\in H^1_0(\Omega)$, i.e.,
\begin{align}
 \edual{U_\coarse}{V_\coarse} = \int_\Omega fV_\coarse-\ff\cdot\nabla V_\coarse\,dx
 \quad\text{for all }V_\coarse\in\XX_\coarse.
\end{align}
We note the Galerkin orthogonality
\begin{align}\label{eq:galerkin}
 \edual{u-U_\coarse}{V_\coarse} = 0
 \quad\text{for all }V_\coarse\in\XX_\coarse
\end{align}
as well as the resulting C\'ea-type quasi-optimality
\begin{align}\label{eq:cea}
 \norm{u-U_\coarse}{H^1(\Omega)}
 \le C_{\text{C\'ea}}\min_{V_\coarse\in\XX_\coarse}\norm{u-V_\coarse}{H^1(\Omega)} 
 \text{ with }
 C_{\text{C\'ea}} := \textstyle\frac{\norm{\AA}{L^\infty(\Omega)}\!+\!\norm{\bb}{L^\infty(\Omega)}\!+\!\norm{c}{L^\infty(\Omega)}}{\Cell}.
\end{align}%

\subsection{Error estimator}\label{subsec:estimator}
Let $\TT_\coarse\in\T$ and $T_1\in\TT_\coarse$.
For almost every $x\in\partial T_1\cap\Omega$, there exists a unique element $T_2\in\TT_\coarse$ with $x\in T_1\cap T_2$.
We denote the corresponding outer normal vectors by $\nu_1$ resp. $\nu_2$ and define the normal jump as 
\begin{align}
 [(\AA \nabla U_\coarse+\ff)\cdot\nu](x)
 = (\AA  \nabla U_\coarse+\ff)|_{T_1}(x)\cdot \nu_1(x)+(\AA  \nabla U_\coarse+\ff)|_{T'}(x)\cdot \nu_2(x).
\end{align}
With this definition, we employ the residual {\sl a~posteriori} error estimator
\begin{subequations}\label{eq:eta}
\begin{align}
 \eta_\coarse := \eta_\coarse(\TT_\coarse)
 \quad\text{with}\quad 
 \eta_\coarse({\mathcal{S}}_\coarse)^2:=\sum_{T\in{\mathcal{S}}_\coarse} \eta_\coarse(T)^2
 \text{ for all }{\mathcal{S}}_\coarse\subseteq\TT_\coarse,
\end{align}
where, for all $T\in\TT_\coarse$, the local refinement indicators read
\begin{align}
\begin{split}
\eta_\coarse(T)^2&:=|T|^{2/d} \norm{f+\div(\AA\nabla U_\coarse+\ff)-\bb\cdot\nabla U_\coarse-c U_\coarse}{L^2(T)}^2\\
&\quad+|T|^{1/d}\norm{[(\AA \nabla U_\coarse+\ff)\cdot \nu]}{L^2(\partial T\cap \Omega)}^2.
\end{split}
\end{align}
\end{subequations}
We refer, e.g., to the monographs \cite{ainsworth-oden,verfuerth} for the analysis of the residual {\sl a~posteriori} error estimator~\eqref{eq:eta} in the frame of standard FEM with piecewise polynomials of fixed order.

\begin{remark}\label{rem:C11}
If $\XX_\bullet\subset C^1(\Omega)$, then the jump contributions in~\eqref{eq:eta} vanish and $\eta_\coarse(T)$ consists only of the volume residual. 
\end{remark}%

\subsection{Adaptive algorithm}
We consider the common formulation of an adaptive mesh-refining algorithm; see, e.g., Algorithm~2.2 of \cite{axioms}.

\begin{algorithm}
\label{the algorithm}
{\it \textbf{Input:} 
Adaptivity parameter $0<\theta\le1$ and marking constant $\Cmin\ge 1$.\\
\textbf{Loop:} For each $\ell=0,1,2,\dots$, iterate the following steps~{\rm(i)}--{\rm(iv)}:
\begin{itemize}
\item[\rm(i)] Compute Galerkin approximation $U_\ell\in\XX_\ell$.
\item[\rm(ii)] Compute refinement indicators $\eta_\ell({T})$
for all elements ${T}\in\TT_\ell$.
\item[\rm(iii)] Determine a set of marked elements $\MM_\ell\subseteq\TT_\ell$ with  $ \theta\,\eta_\ell^2 \le \eta_\ell(\MM_\ell)^2$ which has up to the multiplicative constant $\Cmin$  minimal cardinality.
\item[\rm(iv)] Generate refined mesh $\TT_{\ell+1}:=\refine(\TT_\ell,\MM_\ell)$. 
\end{itemize}
\textbf{Output:} Sequence of successively refined meshes $\TT_\ell$ and corresponding Galerkin approximations $U_\ell$ with error estimators $\eta_\ell$ for all $\ell \in \N_0$.}
\end{algorithm}

\begin{remark}
\new{For the sake of simplicity}, we assume that $U_\ell$ is computed exactly. 
\new{However, according to \cite[Section~7]{axioms}, the forthcoming optimality analysis remains valid if $U_\ell$ is replaced by an approximation $\widetilde U_\ell\in\XX_\ell$ such that 
\begin{align}\label{eq:inexact solve}
\norm{U_\ell-\widetilde U_\ell}{\mathcal{L}}\le \vartheta \eta_\ell(\widetilde U_\ell)
\end{align}
 with the energy norm $\norm{\cdot}{\mathcal{L}}^2:=\edual{\cdot}{\cdot}$, the error estimator $\eta_\ell(\widetilde U_\ell)$ defined analogously as in \eqref{eq:eta}, and a sufficiently small but fixed adaptivity parameter $0<\vartheta<1$.
In practice, \eqref{eq:inexact solve} can be efficiently realized  if one preconditions  the arising linear system appropriately and then solves it iteratively; 
see \cite{fhps19} in case of the boundary element method.
Assuming $\mathcal{L}$ to be symmetric, i.e., $\bb=0$, one employs PCG-iterations~\cite{gl12} starting from $\widetilde U_\ell^0:=\widetilde U_{\ell-1}$ with $\widetilde U_{-1}:=0$ until
\begin{align}
\norm{\widetilde U_\ell^j - \widetilde U_\ell^{j-1}}{\mathcal{L}}\le \vartheta' \eta_\ell(\widetilde U_\ell^j),
\end{align}
and $\widetilde U_\ell$ is defined as the final PCG-iterate $\widetilde U_\ell^j$.}
For analysis-suitable T-splines \new{and the Poisson model problem, appropriate} preconditioners have recently been developed \new{in \cite{cv20}}.
\new{At least for the Poisson model problem, this gives rise to an extended version of  Algorithm~\ref{the algorithm} which does not only converge at optimal rate with respect to the number of mesh elements, but also with respect to the overall computational cost; see \cite{gps19+} for a recent development.}
\end{remark}

\subsection{Data oscillations}
We fix  polynomial orders $(q_1,\dots, q_d)$ and define for $\TT_\coarse \in\T$ the space of transformed polynomials 
\begin{align}\label{eq:polynomials}
\mathcal{P}(\Omega):=\set{\widehat V\circ\gamma}{\widehat V \text{ is a tensor-polynomial of order }(q_1,\dots, q_d)}.
\end{align}

\begin{remark}\label{rem:hot}
In order to obtain higher-order oscillations, the natural choice of the polynomial orders is $q_i\ge2p_i-1$ for $i\in\{1\,\dots,d\}$; see, e.g.,  \cite[Section~3.1]{nv}.
If $\XX_\bullet\subset C^1(\Omega)$, it is sufficient to choose $q_i\ge2p_i-2$; see Remark~\ref{rem:C12}.
\end{remark}

Let $\TT_\coarse\in\T$.
For $T\in\TT_\coarse$, we define the $L^2$-orthogonal projection $P_{\coarse,T}:L^2(T)\to \set{{W}|_{T}}{{W}\in\mathcal{P}(\Omega)}$.
For an interior edge 
$E\in\mathcal{E}_{\coarse,T}:=\set{T\cap T'}{T'\in\TT_\coarse\wedge {\rm dim}(T\cap T')=d-1}$, we define the $L^2$-orthogonal projection  $P_{\coarse,  E}:L^2(E)\to \set{{W}|_{E}}{{W}\in\mathcal{P}(\Omega)}$.
Note that $\bigcup\mathcal{E}_{\coarse,T}=\overline{\partial T\cap \Omega}$.
For $V_\coarse\in\XX_\coarse$, we define  the corresponding oscillations
\begin{subequations}\label{eq:osc}
\begin{align}
 \osc_\coarse(V_\coarse) := \osc_\coarse(V_\coarse,\TT_\coarse)
 \quad\text{with}\quad 
 \osc_\coarse({{V_\coarse}},{\mathcal{S}}_\coarse)^2:=\sum_{T\in{\mathcal{S}}_\coarse} \osc_\coarse({{V_\coarse}}, T)^2
 \text{ for all }{\mathcal{S}}_\coarse\subseteq\TT_\coarse,
\end{align}
where, for all $T\in\TT_\coarse$, the local oscillations read
\begin{align}
\begin{split}
\osc_\coarse({{V_\coarse}},T)^2&:=|T|^{2/d} \norm{(1-P_{\coarse,T})(f+\div(\AA\nabla V_\coarse+\ff)-\bb\cdot\nabla V_\coarse-c V_\coarse)}{L^2(T)}^2\\&\quad+\sum_{E\in\mathcal{E}_{\coarse,T}}|T|^{1/d}\norm{(1-P_{\coarse,E})[(\AA \nabla V_\coarse+\ff)\cdot \nu]}{L^2(E)}^2.
\end{split}
\end{align}
\end{subequations}
We refer, e.g., to \cite{nv} for the analysis of oscillations in the frame of standard FEM with piecewise polynomials of fixed order.
\begin{remark}\label{rem:C12}
If $\XX_\bullet\subset C^1(\Omega)$, then the jump contributions in~\eqref{eq:osc} vanish and $\osc_\coarse(V_\coarse,T)$ consists only of the volume oscillations.  
\end{remark}%

\subsection{Main result}
Let 
\begin{align}
 \T(N):=\set{\TT_\coarse\in\T}{\#\TT_\coarse-\#\TT_0\le N}
 \quad\text{for all }N\in\N_0.
\end{align}
For all $s>0$, define
\begin{align}
 \norm{u}{\mathbb{A}_s}
 := \sup_{N\in\N_0}\min_{\TT_\coarse\in\T(N)}(N+1)^s\,\eta_\coarse\in[0,\infty]
\end{align}
and 
\begin{align}
 \norm{u}{\mathbb{B}_s}
 := \sup_{N\in\N_0}\Big(\min_{\TT_\coarse\in\T(N)}(N+1)^s\,\inf_{V_\coarse\in\XX_\coarse} \big(\norm{u-V_\coarse}{H^1(\Omega)}+\osc_\coarse(V_\coarse)\big)\Big)\in[0,\infty].
\end{align}
By definition, $\norm{u}{\mathbb{A}_s}<\infty$ (\new{or} $\norm{u}{\mathbb{B}_s}<\infty$) implies that the error estimator $\eta_\coarse$ (\new{or} the total error) on the optimal meshes $\TT_\coarse$ decays at least with rate $\OO\big((\#\TT_\coarse)^{-s}\big)$. The following main theorem states that Algorithm~\ref{the algorithm} reaches each possible rate $s>0$.
The proof  builds upon the analysis of \cite{igafem} and is given in Section \ref{sec:proof}.
Generalizations are found in Section~\ref{sec:generalizations}.

\begin{theorem}\label{thm:main}
The following four assertions {\rm (i)--\rm(iv)} \new{hold}:
\begin{itemize}
\item[\rm (i)]
The residual error estimator~\eqref{eq:eta} satisfies reliability, i.e., there exists a constant $\Crel>0$ such that
\begin{align}\label{eq:reliable}
 \norm{u-U_\coarse}{H^1(\Omega)}+\osc_\bullet\le \Crel\eta_\coarse\quad\text{for all }\TT_\coarse\in\T.
\end{align}
\item[\rm(ii)]

The residual error estimator satisfies efficiency, i.e., there  exists  a  constant $\C{eff}>0$ such  that
\begin{align}\label{eq:efficient}
\C{eff}^{-1}\eta_\coarse\le \inf_{V_\coarse\in\XX_\coarse}\big( \norm{u-V_\coarse}{H^1(\Omega)}+\osc_\coarse(V_\coarse)\big).
\end{align}
\item[\rm(iii)]
For arbitrary $0<\theta\le1$ and $\Cmin\ge1$, there exist constants $\Clin>0$ and $0<\qlin<1$ such that the estimator sequence of Algorithm~\ref{the algorithm} guarantees linear convergence in the sense of
\begin{align}\label{eq:linear}
\eta_{\ell+j}^2\le \Clin\qlin^j\eta_\ell^2\quad\text{for all }j,\ell\in\N_0.
\end{align}
\item[\rm (iv)]
There exists a constant $0<\theta_\opt\le1$ such that for all $0<\theta<\theta_\opt$, all $\Cmin\ge1$, and all $s>0$, there exist constants $\copt,\Copt>0$ such that
\begin{align}\label{eq:optimal}
 \copt\norm{u}{\mathbb{A}_s}
 \le \sup_{\ell\in\N_0}{(\# \TT_\ell-\#\TT_0+1)^{s}}\,{\eta_\ell}
 \le \Copt\norm{u}{\mathbb{A}_s},
\end{align}
i.e., the estimator sequence will decay with each possible rate $s>0$. 
\end{itemize}
\noindent The constants $\C{rel},\C{eff},\C{lin},q_{\rm lin},\theta_{\rm opt},$ and $\Copt$ depend only on  $d$, the coefficients of the differential operator $\LL$, $\diam(\Omega)$, $\C{\gamma}$, and $(p_1,\dots,p_d)$, where $\Clin,\qlin$ depend additionally on $\theta$ and the sequence $(U_\ell)_{\ell\in\N_0}$, and $\Copt$ depends furthermore on $\Cmin$, and $s>0$.
Finally, $\copt$ depends only on $\C{son}$, $\#\TT_0$, and $s$.\hfill$\square$
\end{theorem}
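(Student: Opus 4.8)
The plan is to deduce all four assertions from the abstract framework developed in \cite{igafem}: that work isolates a catalogue of properties of the meshes, the mesh-refinement, the ansatz spaces, and the oscillations, and shows that once these are verified, (i) reliability, (ii) efficiency, (iii) linear convergence, and (iv) optimal rates follow automatically (the last two in the spirit of the abstract machinery of \cite{axioms}). Hence the entire task reduces to checking these concrete properties for analysis-suitable T-splines on the Morgenstern meshes and then invoking the abstract result. I would organize the verification into mesh-combinatorial properties, finite-element-space properties, and estimator/oscillation properties.

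For the mesh-combinatorial side, the uniform bound on $\#\boldsymbol{\rm N}_\coarse(\widehat T)$ and the son estimate (each bisection produces boundedly many children of comparable size) follow directly from \eqref{eq:shape regular parameter1}--\eqref{eq:shape regular parameter2}. The two decisive facts are the closure estimate $\#\TT_\ell-\#\TT_0\lesssim\sum_{k=0}^{\ell-1}\#\MM_k$ and the overlay estimate, asserting that any two admissible meshes possess a common admissible refinement of controlled cardinality; both are supplied by the refinement analysis of \cite{morgensternT1} for $d=2$ and \cite{morgensternT2} for $d=3$. Local quasi-uniformity in the physical domain is obtained by transporting \eqref{eq:shape regular parameter1}--\eqref{eq:shape regular parameter2} through the bi-Lipschitz, piecewise-$C^2$ parametrization $\gamma$, controlled by $\C{\gamma}$ in \eqref{eq:Cgamma}.

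For the ansatz spaces, the crucial structural property is nestedness $\XX_\coarse\subseteq\XX_\circ$ whenever $\TT_\circ\in\refine(\TT_\coarse)$, which for T-splines is not automatic; it relies on the analysis-suitability and dual-compatibility established via Lemma~\ref{lem:basis of X} and the results of \cite{morgensternT1,morgensternT2,variational}. On top of nestedness I would construct a Scott--Zhang-type quasi-interpolation operator onto $\XX_\coarse$ using the basis of Lemma~\ref{lem:basis of X} together with the finite-overlap and local-support structure of Lemma~\ref{lem:local support}; this yields local first-order approximation and $H^1$-stability, and, combined with an elementwise inverse estimate (available since $V|_T\in H^2(T)$ for $V\in\XX_\coarse$), the estimator axioms. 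Reliability (i) then follows from this quasi-interpolation and Galerkin orthogonality \eqref{eq:galerkin}, while efficiency (ii) is the standard Verf\"urth bubble-function argument in the physical domain---which uses only shape regularity and is thus essentially independent of the spline structure---supplemented by the oscillation terms for the chosen polynomial order (cf.\ Remark~\ref{rem:hot}).

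Feeding the verified stability, reduction, and general quasi-orthogonality into D\"orfler marking yields linear convergence (iii); adding discrete reliability together with the closure and overlay estimates yields optimal rates (iv). I expect the main obstacles to be precisely nestedness and discrete reliability. Nestedness is delicate because refining a T-mesh need not a priori enlarge the spline space; the Morgenstern refinement is engineered to preserve analysis-suitability exactly so that it does. Discrete reliability requires a localized argument showing that the Galerkin difference $U_\circ-U_\coarse$ is bounded by the estimator restricted to the refined elements and a bounded-size neighborhood; here the local-support bound of Lemma~\ref{lem:local support} and a carefully localized projection into $\XX_\coarse$ are the key tools.
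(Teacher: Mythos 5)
Your proposal follows essentially the same route as the paper: both reduce Theorem~\ref{thm:main} to the abstract framework of \cite{igafem} and then verify its mesh, refinement, space, and oscillation properties for the T-spline setting, with the closure and overlay estimates taken from \cite{morgensternT1,morgensternT2}, nestedness supplied by the Morgenstern refinement analysis, and a Scott--Zhang-type projector built from the dual-compatible basis via Lemmas~\ref{lem:basis of X} and~\ref{lem:local support}. The only difference is presentational: you additionally sketch the internal estimator-axiom mechanics (stability, reduction, quasi-orthogonality, discrete reliability) that the paper leaves encapsulated in the cited abstract result.
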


\begin{remark}
In particular, it holds that
\begin{align}\label{eq:equivalence}
\C{eff}^{-1}\norm{u}{\mathbb{A}_s}\le\norm{u}{\mathbb{B}_s}\le \C{rel}\norm{u}{\mathbb{A}_s}\quad\text{for all }s>0.
\end{align}
If one  applies continuous piecewise polynomials of degree $p$ on a triangulation of some polygonal \new{or} polyhedral domain $\Omega$ as ansatz space, \cite{morin} proves that $\norm{u}{\mathbb{B}_{p/d}}<\infty$.
The proof  requires that $u$ allows for a certain decomposition and that the oscillations are of higher order; see Remark~\ref{rem:hot}.
In our case, $\norm{u}{\mathbb{A}_s}\simeq\norm{u}{\mathbb{B}_s}$ depends besides the polynomial degrees  $(p_1,\dots,p_d)$ also on the (piecewise) smoothness of the parametrization $\gamma$.  
In practice, $\gamma$ is usually piecewise $C^\infty$. 
Given this additional regularity of $\gamma$, one might expect that the result of \cite{morin} can be generalized such that 
$\norm{u}{\mathbb{A}_{s}},\norm{u}{\mathbb{B}_{s}}<\infty$ for $s=\min_{i=1,\dots,d} p_i/d$.
However, the proof goes beyond the scope of the present work and is left to future research.
\end{remark}

\begin{remark}
Note that almost minimal cardinality of $\MM_\ell$ in Algorithm \ref{the algorithm} {\rm (iii)} is only required to prove optimal convergence behavior \eqref{eq:optimal}, while linear convergence \eqref{eq:linear} formally allows $\Cmin=\infty$, i.e., it suffices that $\MM_\ell$ satisfies the D\"orfler marking criterion.
We refer to \cite[Section 4.3--4.4]{axioms} for details.
\end{remark}

\begin{remark}
{\rm(a)} 
If the bilinear form $\edual{\cdot}{\cdot}$ is symmetric, $\Clin$, $\qlin$ as well as $\copt$, $\Copt$ are  independent of $(U_\ell)_{\ell\in\N_0}$; see \cite[Remark~4.1]{igafem}.

{\rm(b)} If the bilinear form $\edual{\cdot}{\cdot}$ is non-symmetric, there exists an index $\ell_0\in\N_0$ such that the constants $\Clin$, $\qlin$ as well as $\copt$, $\Copt$ are independent of $(U_\ell)_{\ell\in\N_0}$ if \eqref{eq:linear}--\eqref{eq:optimal} are formulated only for $\ell\ge\ell_0$. We refer to the recent work \cite[Theorem~19]{helmholtz}.
\end{remark}%


\begin{remark}
Let $h_\ell:=\max_{T\in\TT_\ell}|T|^{1/d}$ be the maximal mesh-width. Then, $h_\ell\to0$ as $\ell\to\infty$, ensures that
$\XX_\infty:=\overline{\bigcup_{\ell\in\N_0}\XX_\ell}=H_0^1(\Omega)$; see \cite[Remark~2.7]{igafem} for the elementary proof.
We note that the latter observation allows to follow the ideas of \cite{helmholtz}  to show that the adaptive algorithm yields optimal convergence even if the bilinear form $\dual\cdot\cdot_\LL$ is only elliptic up to some compact perturbation provided that the continuous problem is well-posed. This includes, e.g., adaptive FEM for the Helmhotz equation; see \cite{helmholtz}.
\end{remark}




\section{Proof of Theorem~\ref{thm:main}}\label{sec:proof}

In \cite[Section~2]{igafem}, we have identified  abstract properties of the underlying meshes, the mesh-refinement, the finite element spaces,  and the oscillations  which imply Theorem~\ref{thm:main}; see \cite[Section~4.2--4.3]{diss} for more details.
We mention that \cite{igafem,diss} actually only treat the case $\ff=0$, but the corresponding proofs immediately extend to more general $\ff$ as in Section~\ref{sec:model}.
In the remainder of this section, we recapitulate these properties and verify them for our considered T-spline setting.
For their formulation, we define for  $\TT_\coarse\in\T$ and $\omega\subseteq\overline\Omega$, the patches of order $k\in\N$ inductively by
\begin{align}\label{eq:patch}
 \pi_\coarse^0(\omega) := \omega,
 \quad 
 \pi_\coarse^k(\omega) := \bigcup\set{T\in\TT_\coarse}{ {T}\cap \pi_\coarse^{k-1}(\omega)\neq \emptyset}.
\end{align}
The corresponding set of elements is
\begin{align}
 \Pi_\coarse^k(\omega) := \set{T\in\TT_\coarse}{ {T} \subseteq \pi_\coarse^k(\omega)},
 \quad\text{i.e.,}\quad
 \pi_\coarse^k(\omega) = \bigcup\Pi_\coarse^k(\omega){\text{ for }k>0}.
\end{align}
To abbreviate notation, we let $\pi_\coarse(\omega) := \pi_\coarse^1(\omega)$ and $\Pi_\coarse(\omega) := \Pi_\coarse^1(\omega)$.
For ${\mathcal{S}}_\coarse\subseteq\TT_\coarse$, we define $\pi_\coarse^k({\mathcal{S}}_\coarse):=\pi_\coarse^k(\bigcup{\mathcal{S}}_\coarse)$
and $\Pi_\coarse^k({\mathcal{S}}_\coarse):=\Pi_\coarse^k(\bigcup{\mathcal{S}}_\coarse)$. 

\subsection{Mesh properties}
\label{subsec:M true}
We show that there exist $\C{locuni},\Cpatch,\Ctrace,C_{\rm dual}>0$ such that all meshes $\TT_\coarse\in\T$ satisfy the following four properties~\eqref{M:shape}--\eqref{M:dual}:
\begin{enumerate}
\renewcommand{\theenumi}{M\arabic{enumi}}
\bf\item\normalfont\label{M:shape}
\textbf{Local quasi-uniformity.}
For all $T\in\TT_\coarse$ and all $T'\in\Pi_\coarse(T)$, it holds that $\C{locuni}^{-1}|T'| \le |T| \le \C{locuni}\,|T'|$, i.e., neighboring elements have comparable size.
\bf\item\normalfont\label{M:patch}
\textbf{Bounded element patch.}
For all $T\in\TT_\coarse$, it holds that $\#\Pi_\coarse(T)\le\Cpatch$, 
i.e., the number of elements in a patch is uniformly bounded.
\bf\item\normalfont\label{M:trace}
\textbf{Trace inequality.}
For all $T\in\TT_\coarse$ and all $v\in H^1(\Omega)$, it holds that
$\norm{v}{L^2(\partial T)}^2\le \Ctrace \big(|T|^{-1/d}\norm{v}{L^2(T)}^2+ |T|^{1/d}\norm{\nabla v}{L^2(T)}^2\big).$
\bf\item\normalfont\label{M:dual}
\textbf{Local estimate in dual norm:} For all $T\in\TT_\coarse$ and all $w\in L^2(T)$, it holds that
$|T|^{-1/d}\norm{w}{H^{-1}(T)}\le \C{dual}  \norm{w}{L^2(T)},$
where $\norm{{w}}{H^{-1}(T)}:= \sup\set{\int_T{w}  v\,dx}{v\in H_0^1(T)\wedge \norm{ \nabla v}{L^2(T)}=1}$.
\end{enumerate}

\begin{remark}
In usual applications, where $T\in\TT_\coarse$ have simple shapes, the properties \eqref{M:trace}--\eqref{M:dual} are naturally satisfied; see, e.g., \cite[Section~4.2.1]{diss}.
\end{remark}

To see \eqref{M:shape}--\eqref{M:dual}, 
let $\TT_\coarse\in\T$.
Then, \eqref{eq:shape regular parameter1}--\eqref{eq:shape regular parameter2} imply  local quasi-uniformity \eqref{M:shape} in the parameter domain, which transfers with the help of the regularity~\eqref{eq:Cgamma} of $\gamma$ immediately to the physical domain.
The constant $\C{locuni}$ depends only on  the dimension $d$ and the constant $\C{\gamma}$. 
Moreover, \eqref{eq:shape regular parameter1}--\eqref{eq:shape regular parameter2} yield uniform boundedness of the number of elements in a patch, i.e., \eqref{M:patch}, where $\C{patch}$ depends only on $d$. 

Regularity \eqref{eq:Cgamma} of $\gamma$ shows that it is sufficient to prove \eqref{M:trace} for hyperrectangles $\widehat T$ in the parameter domain.
There, the trace inequality \eqref{M:trace} is well-known; see, e.g., \cite[Proposition~4.2.2]{diss} for a proof for general Lipschitz domains.
The constant $\Ctrace$ depends only on  $d$ and $\C{\gamma}$. 

Finally, \eqref{M:dual} in the parameter domain follows immediately from the Poincar\'e inequality.
By regularity \eqref{eq:Cgamma} of $\gamma$, this property transfers to the physical domain.
The constant $\C{dual}$ depends only on  $d$ and $\C{\gamma}$. 

\subsection{Refinement properties}\label{sec:refinement props}

We show that there exist $\Cson\ge2$ and $0<\qson<1$ such that all meshes $\TT_\coarse\in\T$  satisfy for arbitrary marked elements $\MM_\coarse\subseteq\TT_\coarse$ with corresponding refinement $\TT_\fine:=\refine(\TT_\coarse,\MM_\coarse)$, the following elementary properties~\eqref{R:sons}--\eqref{R:reduction}:
\begin{enumerate}
\renewcommand{\theenumi}{R\arabic{enumi}}
\bf\item\normalfont\label{R:sons}
\textbf{Bounded number of sons.}
It holds that $\#\TT_\fine \le \Cson\,\#\TT_\coarse$, i.e., one step of refinement leads to a bounded increase of elements.
\bf\item\normalfont\label{R:union}
\textbf{Father is union of sons.}
It holds that $T=\bigcup\set{{T'}\in\TT_\fine}{T'\subseteq T}$ for all $T\in\TT_\coarse$, i.e., each element $T$ is the union of its successors.
\bf\item\normalfont\label{R:reduction}
\textbf{Reduction of sons.}
It holds that $|T'| \le \qson\,|T|$ for all $T\in\TT_\coarse$ and all $T'\in\TT_\fine$ with $T'\subsetneqq T$, i.e., successors are uniformly smaller than their father.
\end{enumerate}
By induction and the definition of $\refine(\TT_\coarse)$, one easily sees that \eqref{R:union}--\eqref{R:reduction} remain valid for arbitrary $\TT_\fine\in\refine(\TT_\coarse)$.
In particular, \eqref{R:union}--\eqref{R:reduction} imply that each refined element $T\in\TT_\coarse\setminus\TT_\fine$ is split into at least two sons, wherefore 
\begin{align}\label{eq:R:refine}
\#(\TT_\coarse\setminus\TT_\fine)\le \#\TT_\fine-\#\TT_\coarse\quad\text{for all }\TT_\fine\in\refine(\TT_\coarse).
\end{align}

\begin{remark}
In usual applications, the properties \eqref{R:union}--\eqref{R:reduction} are trivially satisfied.
The same holds for \eqref{R:sons} on rectangular meshes.
However, \eqref{R:sons} is not obvious for standard refinement strategies for simplicial meshes; see \cite{gss14} for 3D newest vertex bisection for tetrahedral meshes.
\end{remark}

Moreover,  the following properties~\eqref{R:closure}--\eqref{R:overlay} hold with a generic constant $\Cclos>0$:
\begin{enumerate}
\renewcommand{\theenumi}{R\arabic{enumi}}
\setcounter{enumi}{3}
\bf\item\normalfont\label{R:closure}
\textbf{Closure estimate.}
If $\MM_\ell\subseteq\TT_\ell$ and $\TT_{\ell+1}=\refine(\TT_\ell,\MM_\ell)$ for all $\ell\in\N_0$, then 
\begin{align*}
\# \TT_L-\#\TT_0\le \Cclos\sum_{\ell=0}^{L-1}\#\MM_\ell\quad\text{for all }L\in\N.
\end{align*}
\bf\item\normalfont\label{R:overlay}
\textbf{Overlay estimate.}
For all $\TT_\coarse,\TT_\star\in\T$, there exists a common refinement $\TT_\fine\in\refine(\TT_\coarse)\cap\refine(\TT_\star)$ such that 
\begin{align*}
\#\TT_\fine \le \#\TT_\coarse + \#\TT_\star - \#\TT_0.
\end{align*}
\end{enumerate}

\subsection*{Verification of  {(\ref{R:sons})--(\ref{R:reduction})}}\label{sec:R123}
 \eqref{R:sons} is trivially satisfied with $\Cson=2$, since each refined element is split into exactly two elements.
Moreover, the union of sons property \eqref{R:union} holds by definition.
The reduction property \eqref{R:reduction} in the parameter domain is trivially satisfied and easily transfers to the physical domain with the help of  the regularity  \eqref{eq:Cgamma} of $\gamma$; see \cite[Section~5.3]{igafem} for details.
The constant  $0<q_{\rm son}<1$  depends only on $d$ and $C_\gamma$. 

\subsection*{Verification of {(\ref{R:closure})}}\label{subsec:R3}
The proof of the closure estimate \eqref{R:closure} is found in \cite[Section~6]{morgensternT1} for $d=2$, and in \cite[Section~7]{morgensternT2} for $d=3$.
The constant $\C{clos}$ depends only on the dimension $d$ and the polynomials orders $(p_1,\dots,p_d)$. 

\subsection*{Verification of {(\ref{R:overlay})}}\label{sec:R5}
The proof of the overlay property \eqref{R:overlay} is found in \cite[Section~5]{morgensternT1} for $d=2$.
For $d=3$, the proof follows along the same lines.


\subsection{Space properties}\label{subsec:ansatz}

We show that there exist constants $\Cinv>0$ and $\kloc,\kproj \in\N_0$ such that the following properties~\eqref{S:inverse}--\eqref{S:local} hold for all $\TT_\coarse\in\T$:
\begin{enumerate}
\renewcommand{\theenumi}{S\arabic{enumi}}
\bf\item\normalfont\label{S:inverse}
\textbf{Inverse estimate.}
For  all $i,j\in\{0,1,2\}$ with $j\le i$, all $V_\coarse\in\XX_\coarse$ and all $T\in\TT_\bullet$, it holds that $|T|^{(i-j)/d} \norm{V_\coarse}{H^i(T)}\le \Cinv \, \norm{V_\coarse}{H^j(T)}$.
\bf\item\normalfont\label{S:nestedness}
\textbf{Refinement yields nestedness.}
For  all $\TT_\fine\in\refine(\TT_\coarse)$, it holds that $\XX_\coarse\subseteq\XX_\fine$.
\bf\item\normalfont\label{S:local}
\textbf{Local domain of definition.}
For all $\TT_\fine\in\refine(\TT_\coarse)$ and all
 $T\in\TT_\coarse\setminus \Pi_\coarse^{k_{\rm loc}}( \TT_\coarse\setminus\TT_\fine)\subseteq\TT_\coarse\cap\TT_\fine$,  it holds  that
$V_\fine|_{\pi_\coarse^{k_{\rm proj}}(T)} \in \set{V_\coarse|_{\pi_\coarse^{k_{\rm proj}}(T)}}{V_\coarse\in\XX_\coarse}$.
\end{enumerate}

Moreover, we show that there exist $\C{sz}>0$ and $\kapp, k_{\rm  grad}\in\N_0$ such that for all $\TT_\coarse\in\T$, there exists a Scott--Zhang-type projector $J_\coarse:H^1_0(\Omega)\to\XX_\coarse$ with the following properties~\eqref{S:proj}--\eqref{S:grad}:
\begin{enumerate}
\renewcommand{\theenumi}{S\arabic{enumi}}
\setcounter{enumi}{3}
\bf\item\normalfont\label{S:proj}
\textbf{Local projection property.}
With $\kproj\in\N_0$ from \eqref{S:local}, 
for all $v\in H^1_0(\Omega)$ and $T\in\TT_\coarse$, it holds that $(J_\coarse v)|_T = v|_T$, if $v|_{\pi_\coarse^{k_\proj}(T)} \in \set{V_\coarse|_{\pi_\coarse^{k_\proj}(T)}}{V_\coarse\in\XX_\coarse}$.
%
\bf\item \normalfont
\textbf{Local $\boldsymbol{L^2}$-approximation property.}
For all $T\in\TT_\coarse$ and all $v\in H_0^1(\Omega)$, it holds that $\norm{(1-J_\coarse)v}{L^2(T)}\le \C{sz} \,|T|^{1/d}\,\norm{v}{H^1(\pi_\coarse^{k_\mathrm{app}}(T))}$.
\label{S:app}
\bf\item\normalfont\label{S:grad}
\textbf{Local $\boldsymbol{H^1}$-stability.}
For all $T\in\TT_\coarse$ and $v\in H_0^1(\Omega)$, it holds that $\norm{\nabla J_\coarse v}{L^2(T)}\le \C{sz} \norm{v}{H^1(\pi_\coarse^{k_\mathrm{grad}}(T))}$.
\end{enumerate}

\subsection*{Verification of {(\ref{S:inverse})}}\label{subsec:E1.1 true}
Let $T\in\TT_\coarse\in\T$.
Let $V_\coarse\in\XX_\coarse$.
Define $\widehat V_\coarse:=V_\coarse\circ\gamma\in\widehat\XX_\coarse\subseteq\widehat\YY_\coarse$ and $\widehat T:=\gamma^{-1}(T)\in\widehat\TT_\coarse$.
Regularity \eqref{eq:Cgamma} of $\gamma$ proves for $i\in\{0,1,2\}$ that
\begin{align}\label{eq:sobolev equivalent}
\norm{V_\bullet}{H^i(T)} 
\simeq\norm{\widehat V_\bullet}{H^i(\widehat T)},
\end{align}
where the hidden constants depend only on $d$ and $C_\gamma$.
Thus, it is sufficient to prove \eqref{S:inverse} in the parameter domain.
In general, $\widehat V_\coarse$ is not a $\widehat \TT_\coarse$-piecewise tensor-polynomial.
However, there is a uniform constant $k\in\N_0$ depending only on $d$ and $(p_1,\dots,p_d)$ such that $\widehat V_\coarse|_{\widehat T'}$ is a tensor-polynomial on any $k$-times refined son $\widehat T'\subseteq \widehat T$ with $\widehat T'\in \widehat\TT_{{\rm uni}(\level(\widehat T)+k)}$:

To see this, we use Lemma~\ref{lem:local support}, which yields that the number of B-splines $\widehat B_{\coarse,z}$ which are needed in the linear combination of $\widehat V_\coarse|_{\widehat T}$, i.e., $\widehat B_{\coarse,z}$  with $|\supp(\widehat B_{\coarse,z})\cap\widehat T|\new{>0}$, is uniformly bounded by $k_{\rm supp}$. 
Moreover, Lemma~\ref{lem:local support} and local quasi-uniformity \eqref{eq:shape regular parameter1}--\eqref{eq:shape regular parameter2} show that $\level(\widehat T'')\simeq\level(\widehat T)$ for all elements $\widehat T''\in\widehat\TT_\coarse$ which satisfy that $|\supp(\widehat B_{\coarse,z})\cap \widehat T''|> 0$ for any of these B-splines.
Since we only allow dyadic bisections, the definition of $\widehat B_{\coarse,z}$ yields the existence of $k\in\N_0$ depending only on $d$ and $(p_1,\dots,p_d)$ such that $\widehat B_{\coarse,z}|_{\widehat T'}$ and thus $\widehat V_\coarse|_{\widehat T'}$ are  tensor-product polynomials for any son $\widehat T'\subseteq \widehat T$ with $\widehat T'\in \widehat\TT_{{\rm uni}(\level(\widehat T)+k)}$. 

In particular, we can apply a standard scaling argument on $\widehat T'$.
Since $\widehat T$ is the union of all such sons and $|\widehat T|\simeq|\widehat T'|$, this yields that 
\begin{align}\label{eq:parameter invest}
|\widehat T|^{(i-j)/d} \norm{\widehat V_\bullet}{H^i(\widehat T)}\lesssim \norm{\widehat V_\bullet}{H^j(\widehat T)},
\end{align}
where  the hidden constant  depends only on $d$ and $(p_1,\dots,p_d)$.
Together, \eqref{eq:sobolev equivalent}--\eqref{eq:parameter invest} conclude the proof of \eqref{S:inverse}, where $C_{\rm inv}$ depends only on $d$, $C_\gamma$,  and $(p_1,\dots,p_d)$.

\subsection*{Verification of {(\ref{S:nestedness})}}\label{subsec:nestedness}
We note that in general, i.e., for arbitrary T-meshes, nestedness of the induced T-splines spaces is not evident; see, e.g., \cite[Section~6]{li2014analysis}.
However, the refinement strategies (Algorithm~\ref{alg:refinement}) from \cite{morgensternT1,morgensternT2} yield nested T-spline spaces.
For $d=2$, this is stated in \cite[Corollary~5.8]{morgensternT1}. 
For $d=3$, this is stated in \cite[Theorem~5.4.12]{diss_morgenstern}.
We already mentioned in Section~\ref{section:basis} that \cite{morgensternT2} (as well as \cite{diss_morgenstern}) define the space of  T-splines  differently as the span of $\set{\widehat B_{\coarse,z}}{z\in\widehat\NN_\coarse^\act\cap\overline{\widehat\Omega}}$.
Nevertheless, the proofs immediately generalize to our standard definition of T-splines, i.e., 
\begin{align}\label{eq:YY nested}
\widehat\YY_\coarse\subseteq\widehat\YY_\fine\quad\text{for all }\widehat\TT_\coarse\in\widehat\T,\widehat\TT_\fine\in\refine(\widehat\TT_\coarse),
\end{align}
which also yields the  inclusion $\XX_\coarse\subseteq\XX_\fine$.

\subsection*{Verification of {(\ref{S:local})}}\label{subsec:E4.2 true}
We show the assertion in the parameter domain.
For arbitrary but fixed $\kproj\in\N_0$ (which will be fixed later in Section~\ref{subsec:E4.1 true} to be $k_{\rm proj}:= k_{\rm supp}$), we  set $k_{\rm loc}:=k_{\rm proj}+k_{\rm supp}$ with $k_{\rm supp}$ from Lemma~\ref{lem:local support}. 
Let $\widehat \TT_\bullet\in\widehat\T$,  $\widehat\TT_\circ\in\refine(\widehat\TT_\bullet)$, and $\widehat V_\circ\in\widehat\XX_\circ$.
We define the patch functions $\pi_\bullet(\cdot)$ and $\Pi_\bullet(\cdot)$ in the parameter domain analogously to the patch functions in the physical domain.
Let $\widehat T\in\widehat\TT_\bullet\setminus\Pi_\bullet^{k_{\rm loc}}(\widehat\TT_\bullet\setminus\widehat\TT_\circ)$. 
Then, one easily shows that 
\begin{align}\label{eq:omegainv}
\Pi_\bullet^{k_{\rm loc}}(\widehat T)\subseteq \widehat\TT_\bullet\cap\widehat\TT_\circ;
\end{align}
see \cite[Section~5.8]{igafem}.
We see that $\widehat\omega=\pi_\fine^{k_{\rm loc}}(\widehat T)$, and,  in particular, also  $\widehat\omega:=$ $\pi_\coarse^{k_{\rm proj}}(\widehat T)=\pi_\fine^{k_{\rm proj}}(\widehat T)$.
According to Lemma~\ref{lem:basis of X}, 
it holds that
\begin{align*}
\set{\widehat V_\coarse|_{\widehat\omega}}{\widehat V_\coarse\in \widehat \XX_\coarse}={\rm span}\set{\widehat B_{\coarse,z}|_{\widehat\omega}}{\big(z\in\widehat\NN^\act_\coarse\setminus \partial\widehat\Omega^\act\big)\wedge\big(|\supp(\widehat B_{\coarse,z})\cap\widehat\omega|>0\big)},
\end{align*}
as well as
\begin{align*}
\set{\widehat V_\fine|_{\widehat\omega}}{\widehat V_\fine\in\widehat\XX_\fine}={\rm span}\set{\widehat B_{\fine,z}|_{\widehat\omega}}{\big(z\in\widehat\NN^\act_\fine \setminus \partial\widehat\Omega^\act\big)\wedge\big(|\supp(\widehat B_{\fine,z})\cap\widehat\omega|>0\big)}.
\end{align*}
We will prove that
\begin{align}\label{eq:local ansatz prove}
\set{\widehat B_{\coarse,z}}{z\in\widehat\NN^\act_\coarse\wedge|\supp(\widehat B_{\coarse,z})\cap\widehat\omega|>0}
=\set{\widehat B_{\fine,z}}{z\in\widehat\NN^\act_\fine\wedge|\supp(\widehat  B_{\fine,z})\cap\widehat\omega|>0},
\end{align}
which will conclude \eqref{S:local}.
To show "$\subseteq$", let $\widehat B_{\coarse,z}$ be an element of the left set.
By Lemma~\ref{lem:local support}, this implies that $\supp(\widehat B_{\coarse,z})\subseteq\pi_\bullet^{k_{\rm loc}}(\widehat T)$.
Together with \eqref{eq:omegainv}, we see that $\supp(\widehat B_{\coarse,z})\subseteq \bigcup(\widehat\TT_\bullet\cap \widehat\TT_\circ)$.
This proves that no element within $\supp(\widehat B_{\coarse,z})$ is changed during refinement.
Thus, the definition of  T-spline basis functions proves that $\widehat B_{\coarse,z}=\widehat B_{\fine,z}$. 
The same argument shows the converse inclusion "$\supseteq$". 
This \new{proves} \eqref{eq:local ansatz prove}, and thus \eqref{S:local} follows.

\subsection*{Verification of {(\ref{S:proj})--(\ref{S:grad})}}\label{subsec:E4.1 true}
Given $\TT_\bullet\in\T$, we introduce a suitable Scott--Zhang-type operator $J_\bullet:H_0^1(\Omega)\to\XX_\bullet$  which satisfies \eqref{S:proj}--\eqref{S:grad}.
To this end, it is sufficient to construct a corresponding operator $\widehat J_\bullet:H_0^1(\widehat\Omega)\to\widehat\XX_\bullet$ in the parameter domain, and to define 
\begin{align}
J_\bullet v:=\big(\widehat J_\bullet (v\circ\gamma)\big)\circ\gamma^{-1}\quad\text{for all }v\in H_0^1(\Omega).
\end{align}
By regularity \eqref{eq:Cgamma} of $\gamma$, the properties \eqref{S:proj}--\eqref{S:grad} immediately transfer from the parameter domain $\widehat\Omega$ to the physical domain $\Omega$.
In Section~\ref{section:basis}, we have already mentioned  that any admissible mesh $\widehat\TT_\coarse\in\widehat\T$ yields dual-compatible B-splines $\set{\widehat B_{\coarse,z}}{z\in\widehat\NN^\act_\coarse}$.
According to \cite[Section~2.1.5]{variational} in combination with \cite[Proposition~7.3]{variational} for $d=2$ and with \cite[Theorem~6.7]{morgensternT2} for $d=3$, this implies for all $z\in\widehat\NN_\coarse$ the existence of a local dual function $\widehat B_{\coarse,z}^*\in L^2(\widehat\Omega)$ with $\supp(\widehat B_{\coarse,z}^*)=\supp(\widehat B_{\coarse,z})$ such that 
\begin{align}\label{eq:duality}
\int_{\widehat\Omega} \widehat B_{\coarse,z}^* \,\widehat B_{\coarse,z'}\,dt=\delta_{z,z'}\quad\text{for all }z'\in\widehat\NN_\coarse^\act,
\end{align}
and 
\begin{align}\label{eq:dual inequality}
\norm{\widehat B_{\coarse,z}^*}{L^2(\widehat \Omega)}\le \prod_{i=1}^d\big({9^{p_i}}(2p_i+3)^d\big) \, {|\supp( \widehat B_{\coarse,z})|^{-1/2}}.
\end{align}
With these dual functions, it is easy to define a suitable Scott--Zhang-type operator by 
\begin{align}
\widehat J_\coarse:L^2(\widehat\Omega)\to \widehat\XX_\coarse,\quad\widehat v\mapsto \sum_{z\in\widehat\NN^\act_\coarse\setminus \partial\widehat\Omega^\act}\Big( \int_{\widehat\Omega} \widehat B_{\coarse,z}^*\, \widehat v\,dt\Big)\,\widehat B_{\coarse,z}.
\end{align}
A similar operator has already been defined and analyzed, e.g., in \cite[Section~7.1]{variational}.
Indeed, the only difference in the definition is the considered index set $\widehat\NN^\act_\coarse\setminus \partial\widehat\Omega^\act$ instead of $\widehat\NN^\act_\coarse$, which guarantees that $\widehat J_\coarse\widehat v$ vanishes on the boundary; see Lemma~\ref{lem:basis of X}.
Along the lines of \cite[Proposition~7.7]{variational}, one can thus prove the following result, where the projection property~\eqref{eq:projection} follows immediately from \eqref{eq:duality}.

\begin{lemma} \label{lem:Scott}
Let $\widehat\TT_\bullet\in\widehat\T$.
Then, $\widehat J_\bullet$ is a projection, i.e.,    
\begin{align}\label{eq:projection}
\widehat J_\bullet\widehat V_\bullet=\widehat V_\bullet\quad\text{for all }\widehat V_\bullet\in\XX_\bullet.
\end{align}
Moreover, $\widehat J_\bullet$ is locally $L^2$-stable, i.e., there exists $C_J>0$ such that for all $\widehat T\in \widehat\TT_\bullet$
\begin{align}\label{eq:L2stability}
\norm{\widehat J_\bullet\widehat v}{L^2(\widehat T)}\le C_J \norm{\widehat v}{L^2\big(\bigcup \{\supp(\widehat B_{\coarse,z}):(z\in\NN^\act_\coarse\setminus \partial\widehat\Omega^\act)\wedge|\supp(\widehat B_{\coarse,z})\cap\widehat T|>0)\}\big)}\quad\text{for all }\widehat v\in L^2(\widehat\Omega).
\end{align}
The constant $C_J$  depends only  on $d$ and $(p_1,\dots,p_d)$.\hfill$\square$
\end{lemma}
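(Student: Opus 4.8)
The plan is to follow the standard Scott--Zhang argument based on the biorthogonal (dual) functions from~\eqref{eq:duality}--\eqref{eq:dual inequality}, exploiting that the relevant B-splines form a basis of $\widehat\XX_\coarse$.

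First I would establish the projection property~\eqref{eq:projection}. Let $\widehat V_\coarse\in\widehat\XX_\coarse$. By Lemma~\ref{lem:basis of X}, the set $\set{\widehat B_{\coarse,z}}{z\in\widehat\NN^\act_\coarse\setminus\partial\widehat\Omega^\act}$ is a basis of $\widehat\XX_\coarse$, so there are unique coefficients $c_{z'}$ with $\widehat V_\coarse=\sum_{z'\in\widehat\NN^\act_\coarse\setminus\partial\widehat\Omega^\act}c_{z'}\,\widehat B_{\coarse,z'}$. Inserting this into the definition of $\widehat J_\coarse$ and using the duality~\eqref{eq:duality} (which holds for all active nodes, in particular for those in $\widehat\NN^\act_\coarse\setminus\partial\widehat\Omega^\act$), each coefficient functional returns $\int_{\widehat\Omega}\widehat B_{\coarse,z}^*\,\widehat V_\coarse\,dt=\sum_{z'}c_{z'}\,\delta_{z,z'}=c_z$, whence $\widehat J_\coarse\widehat V_\coarse=\sum_z c_z\widehat B_{\coarse,z}=\widehat V_\coarse$. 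This already yields that $\widehat J_\coarse$ is a projection.

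Next I would prove the local $L^2$-stability~\eqref{eq:L2stability}. Fix $\widehat T\in\widehat\TT_\coarse$ and $\widehat v\in L^2(\widehat\Omega)$. On $\widehat T$ only the finitely many basis functions with $|\supp(\widehat B_{\coarse,z})\cap\widehat T|>0$ contribute, so a triangle inequality gives $\norm{\widehat J_\coarse\widehat v}{L^2(\widehat T)}\le\sum_z|\int_{\widehat\Omega}\widehat B_{\coarse,z}^*\,\widehat v\,dt|\,\norm{\widehat B_{\coarse,z}}{L^2(\widehat T)}$, where the sum ranges over this index set. For each term I would use $\supp(\widehat B_{\coarse,z}^*)=\supp(\widehat B_{\coarse,z})$ together with Cauchy--Schwarz and the norm bound~\eqref{eq:dual inequality} to estimate $|\int_{\widehat\Omega}\widehat B_{\coarse,z}^*\,\widehat v\,dt|\le\norm{\widehat B_{\coarse,z}^*}{L^2(\widehat\Omega)}\,\norm{\widehat v}{L^2(\supp(\widehat B_{\coarse,z}))}\lesssim|\supp(\widehat B_{\coarse,z})|^{-1/2}\,\norm{\widehat v}{L^2(\supp(\widehat B_{\coarse,z}))}$. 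Since the B-splines satisfy $0\le\widehat B_{\coarse,z}\le1$, I have $\norm{\widehat B_{\coarse,z}}{L^2(\widehat T)}\le|\widehat T|^{1/2}$, and Lemma~\ref{lem:local support} yields $|\widehat T|\simeq|\supp(\widehat B_{\coarse,z})|$ for all contributing $z$. Hence the scaling factors cancel and each summand is bounded by a uniform constant times $\norm{\widehat v}{L^2(\supp(\widehat B_{\coarse,z}))}$.

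Finally I would sum up: by Lemma~\ref{lem:local support} the number of contributing indices $z$ is bounded by $k_{\rm supp}$, uniformly in $\widehat T$ and $\widehat\TT_\coarse$, so $\norm{\widehat J_\coarse\widehat v}{L^2(\widehat T)}$ is controlled by $\norm{\widehat v}{L^2}$ over the union of the corresponding supports, which is exactly the neighbourhood appearing in~\eqref{eq:L2stability}. Tracking constants, $C_J$ only involves the dual-norm constant from~\eqref{eq:dual inequality}, the parameter-domain quasi-uniformity constant from~\eqref{eq:shape regular parameter1}--\eqref{eq:shape regular parameter2}, and $k_{\rm supp}$, all of which depend solely on $d$ and $(p_1,\dots,p_d)$. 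I expect the only delicate point to be the scale-invariant bookkeeping in the stability estimate: one must combine the factor $|\supp(\widehat B_{\coarse,z})|^{-1/2}$ coming from the dual functions with the factor $|\widehat T|^{1/2}$ coming from the B-splines via the local quasi-uniformity $|\widehat T|\simeq|\supp(\widehat B_{\coarse,z})|$, so as to obtain a constant independent of the (arbitrarily small) mesh size, while simultaneously ensuring that the number of overlapping supports stays uniformly bounded.
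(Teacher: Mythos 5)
Your strategy is the right one, and it is in fact the argument that the paper itself only invokes by reference: the paper gives no self-contained proof of Lemma~\ref{lem:Scott}, but points to \cite[Proposition~7.7]{variational} (whose operator differs from $\widehat J_\coarse$ only in the index set) and remarks that \eqref{eq:projection} follows immediately from \eqref{eq:duality}. Your treatment of the projection property --- expanding $\widehat V_\coarse$ in the basis of Lemma~\ref{lem:basis of X} and applying biorthogonality --- is exactly that remark made precise, and your stability argument (triangle inequality, Cauchy--Schwarz with $\supp(\widehat B_{\coarse,z}^*)=\supp(\widehat B_{\coarse,z})$, the scaling bound \eqref{eq:dual inequality}, and the uniformly bounded number of contributing nodes from Lemma~\ref{lem:local support}) is the standard dual-basis proof the paper has in mind.

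One step is justified imprecisely, although the estimate is easily repaired. You cite Lemma~\ref{lem:local support} for the two-sided equivalence $|\widehat T|\simeq|\supp(\widehat B_{\coarse,z})|$ for \emph{all} contributing $z$; the lemma does not state this. The equivalence is derived inside its proof, and only for nodes $z\in\overline{\widehat\Omega}$. For contributing nodes $z\in\widehat\NN^\act_\coarse\setminus\partial\widehat\Omega^\act$ lying \emph{outside} $\overline{\widehat\Omega}$ (such nodes exist as soon as some $p_i\ge5$), the projection \eqref{eq:local knot vector} collapses part of the local knot vector, and the paper only provides the containment $\supp(\widehat B_{\coarse,z})\subseteq\supp(\widehat B_{\coarse,z''})$ for the projected node $z''$, which yields the upper bound $|\supp(\widehat B_{\coarse,z})|\lesssim|\widehat T|$ --- whereas your chain of inequalities needs the \emph{lower} bound $|\widehat T|\lesssim|\supp(\widehat B_{\coarse,z})|$ so that the factor $|\supp(\widehat B_{\coarse,z})|^{-1/2}$ from \eqref{eq:dual inequality} can absorb $|\widehat T|^{1/2}$. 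The clean fix avoids any mesh-size comparison altogether: instead of $\norm{\widehat B_{\coarse,z}}{L^2(\widehat T)}\le|\widehat T|^{1/2}$, use $0\le\widehat B_{\coarse,z}\le1$ to bound $\norm{\widehat B_{\coarse,z}}{L^2(\widehat T)}\le\norm{\widehat B_{\coarse,z}}{L^2(\supp(\widehat B_{\coarse,z}))}\le|\supp(\widehat B_{\coarse,z})|^{1/2}$, which cancels the dual-function factor exactly, for every contributing node. With this replacement each summand is bounded by a constant (depending only on $d$ and $(p_1,\dots,p_d)$) times $\norm{\widehat v}{L^2(\supp(\widehat B_{\coarse,z}))}$, and the rest of your argument, including the dependence claimed for $C_J$, goes through unchanged.
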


With Lemma~\ref{lem:Scott} at hand, we next  prove \eqref{S:proj} in the parameter domain.
Let $\widehat T\in\widehat\TT_\bullet, \widehat v\in H_0^1(\widehat\Omega)$, and $\widehat V_\bullet\in \widehat\XX_\bullet$ such that 
$\widehat v|_{\pi^{k_{\rm proj}}_\bullet(\widehat T)}=\widehat V_\bullet|_{\pi^{k_{\rm proj}}_\bullet(\widehat T)}$, where $k_{\rm proj}:= k_{\rm supp}$ with $k_{\rm supp}$ from Lemma~\ref{lem:local support}.
With Lemma~\ref{lem:local support}, the fact that $\supp(\widehat B_{\coarse,z}^*)=\supp(\widehat B_{\coarse,z})$, and the projection property \eqref{eq:projection} of $\widehat J_\bullet$, we conclude that 
\begin{align*}
(\widehat J_\bullet \widehat v)|_{\widehat T}
=\sum_{z\in\widehat\NN^\act_\coarse\setminus \partial\widehat\Omega^\act}\Big(\int_{\widehat\Omega} \widehat B_{\coarse,z}^*\widehat v\,dt\Big)\,\widehat B_{\coarse,z}|_{\widehat T}
=\sum_{\substack{z\in\widehat\NN^\act_\coarse\setminus \partial\widehat\Omega^\act\\\supp(\widehat B_{\coarse,z})\subseteq\pi_\coarse^{\rm proj}(\widehat T)}}\Big(\int_{\widehat\Omega} \widehat B_{\coarse,z}^*\widehat V_\coarse\,dt \Big) \,\widehat B_{\coarse,z}|_{\widehat T}
=\widehat V_\coarse|_{\widehat T}=\widehat v|_{\widehat T}.
\end{align*}

Next, we prove \eqref{S:app}. 
We note that for the modified projection operator from \cite{variational}, this property is already found, e.g., in \cite[Proposition~7.8]{variational}. 
Let $\widehat T\in\widehat\TT_\bullet$, $\widehat v\in H_0^1(\widehat\Omega)$, and  $\widehat V_\bullet\in \widehat \XX_\bullet$.
By  \eqref{eq:projection}--\eqref{eq:L2stability} and Lemma~\ref{lem:local support}, it holds that 
\begin{align*}
\norm{(1-\widehat J_\bullet)\widehat v}{L^2(\widehat T)} \stackrel{\eqref{eq:projection}}{=}\norm{(1-\widehat J_\coarse)(\widehat v-\widehat V_\bullet)}{L^2(\widehat T)}\stackrel{\eqref{eq:L2stability}}{\lesssim} \norm{  \widehat v-\widehat V_\bullet}{L^2(\pi_\bullet^{k_{\rm supp}}(\widehat T))}.
\end{align*}
To proceed, we distinguish between two cases, first, $\pi_\bullet^{2k_{\rm supp}}(\widehat T)\cap \partial\widehat\Omega=\emptyset$ and, second, $\pi_\bullet^{2k_{\rm supp}}(\widehat T)\cap \partial\widehat\Omega\neq\emptyset$, i.e., if $\widehat T$ is far away from the boundary or not.
Since the elements in the parameter domain are hyperrectangular, these cases are equivalent to $|\pi_\bullet^{2k_{\rm supp}}(\widehat T)\cap \partial\widehat\Omega|=0$ \new{and} $|\pi_\bullet^{2k_{\rm supp}}(\widehat T)\cap \partial\widehat\Omega|>0$, \new{respectively}, where $|\cdot|$ denotes the $(d-1)$-dimensional measure.

In the first case, we proceed as follows:
Nestedness \eqref{eq:YY nested} especially proves that  $1\in\widehat\YY_0\subseteq\widehat\YY_\bullet$.
Thus, there exists a representation $1=\sum_{z\in\widehat\NN_\bullet^\act}c_{z}\widehat B_{\coarse,z}$. 
Indeed, \cite[Proposition]{variational} even proves that $c_z=1$ for all $z\in\widehat\NN^\act_\coarse$, i.e., the B-splines $\set{\widehat B_{\coarse,z}}{z\in\widehat\NN^\act_\coarse}$ form a partition of unity.
With Lemma~\ref{lem:local support}, we see that $|\supp(\widehat\beta)\cap\pi_\bullet^{k_{\rm supp}}(\widehat T)|>0$ implies that $\supp(\widehat\beta)\subseteq\pi_\bullet^{2k_{\rm supp}}(\widehat T)$.
Therefore, the  restriction satisfies that 
\begin{align*}
1=\sum_{z\in\widehat\NN^\act_\coarse}\widehat B_{\coarse,z}|_{\pi_\bullet^{k_{\rm supp}}(\widehat T)}
=\sum_{\substack{z\in\widehat\NN^\act_\coarse\\|\supp(\widehat B_{\coarse,z})\cap \pi_\bullet^{k_{\rm supp}}(\widehat T)|>0}}\widehat B_{\coarse,z}|_{\pi_\bullet^{k_{\rm supp}}(\widehat T)}
=\sum_{\substack{z\in\widehat\NN^\act_\bullet\\ \supp(\widehat  B_{\coarse,z})\subseteq \pi_\bullet^{2k_{\rm supp}}(\widehat T)}}\widehat B_{\coarse,z}|_{\pi_\bullet^{k_{\rm supp}}(\widehat T)}.
\end{align*}
We define 
\begin{align*}
\widehat V_\bullet:=\widehat v_{\pi_\bullet^{k_{\rm supp}}(\widehat T)}\sum_{\substack{z\in\widehat\NN^\act_\bullet\\\supp(\widehat B_{\coarse,z})\subseteq \pi_\bullet^{2k_{\rm supp}}(\widehat T)}}\widehat B_{\coarse,z},\quad\text{where }\widehat v_{\pi_\bullet^{k_{\rm supp}}(\widehat T)}:=| \pi_\bullet^{k_{\rm supp}}(\widehat T)|^{-1}\int_{\pi_\bullet^{k_{\rm supp}}(\widehat T)}\widehat v\,dt.
\end{align*}
In the second case, we set $\widehat V_\bullet:=0$.
In the first case, we apply the Poincar\'e inequality, 
whereas we use the Friedrichs inequality 
in the second case. 
In either case, we obtain that $\widehat V_\bullet\in\widehat\XX_\bullet$, and \eqref{eq:shape regular parameter1}--\eqref{eq:shape regular parameter2} show that
\begin{align}\label{eq:PF}
\norm{\widehat v-\widehat V_\coarse}{L^2(\pi_\coarse ^{k_{\rm supp}}(\widehat T))}\lesssim\diam(\pi_\bullet^{2k_{\rm supp}}(\widehat T))\norm{\nabla \widehat v}{L^2(\pi_\bullet^{2k_{\rm supp}}(\widehat T))}\simeq |\widehat T|^{1/d} \norm{\nabla\widehat v}{L^2(\pi_\bullet^{2k_{\rm supp}}(\widehat T))}.
\end{align}
The hidden constants depend only on $\widehat\TT_0$, $(p_1,\dots,p_d)$, and  the shape of the patch $\pi_\bullet^{k_{\rm supp}}(\widehat T)$ \new{or} the shape of $\pi_\bullet^{2k_{\rm supp}}(\widehat T)$ and of $\pi_\bullet^{k_{\rm supp}}(\widehat T)\cap \partial\widehat \Omega$.
However, by \eqref{eq:shape regular parameter1}--\eqref{eq:shape regular parameter2}, the number of different patch shapes is bounded itself by a constant which again depends only on $d$ and  $(p_1,\dots,p_d)$.

Finally, we prove \eqref{S:grad}.
Let again $\widehat T\in\widehat\TT_\bullet$ and $\widehat v\in H_0^1(\widehat\Omega)$. 
For all $\widehat V_\bullet\in\widehat \XX_\bullet$ that are constant on $\widehat T$, the projection property \eqref{eq:projection} implies that 
\begin{eqnarray*}
\norm{\nabla \widehat J_\bullet\widehat v}{L^2(\widehat T)}\stackrel{\eqref{eq:projection}}{=}\norm{\nabla \widehat J_\bullet(\widehat v-\widehat V_\bullet)}{L^2(\widehat T)}&\stackrel{\eqref{eq:parameter invest}}{\lesssim}& |\widehat T|^{-1/d}\,\norm{\widehat J_\coarse (v-\widehat V_\coarse)}{L^2(\widehat T)}\\
&\stackrel{\eqref{eq:L2stability}}{\lesssim}&|\widehat T|^{-1/d}\,\norm{\widehat v-\widehat V_\bullet}{L^2(\pi_\bullet^{k_{\rm supp}}(\widehat T))}.
\end{eqnarray*}
Arguing as before and using \eqref{eq:PF}, we conclude the proof.

\subsection{Oscillation properties}\label{sec:oscproof}
There exists  $\C{inv}'>0$ such that the following property~\eqref{O:inverse} holds for all $\TT_\coarse\in\T$:
\begin{enumerate}
\renewcommand{\theenumi}{O\arabic{enumi}}
\bf\item\normalfont\label{O:inverse}
\textbf{Inverse estimate in dual norm.}
For all ${W}\in\mathcal{P}(\Omega)$, it holds that $|T|^{1/d} \norm{{W}}{L^2(T)}\le \C{inv}' \, \norm{{W}}{H^{-1}(T)}$. 
\end{enumerate}

Moreover, there exists $\C{lift}>0$  such that for all $\TT_\coarse\in\T$ and all $T,T'\in\TT_\coarse$ with non-trivial $(d-1)$-dimensional intersection $E:=T\cap T'$,  there exists a lifting   operator $ L_{\coarse,E}:\set{W|_E}{W\in\mathcal{P}(\Omega)} \to H_0^1(T\cup T')$ with the following properties~\eqref{O:dual}--\eqref{O:grad}:
\begin{enumerate}
\renewcommand{\theenumi}{O\arabic{enumi}}
\setcounter{enumi}{1}
\bf\item\normalfont
\textbf{Lifting inequality.}
For all ${W}\in\mathcal{P}(\Omega)$, it  holds that $\int_E{W}^2\,dx \le \C{lift} \int_E\,L_{\coarse,E} ({{W|_E}})W\,dx$.
\label{O:dual}
\bf\item\normalfont\label{O:stab}
\textbf{$\boldsymbol{L^2}$-control.}
For all  ${W} \in \mathcal{P}(\Omega)$, it holds that $\norm{L_{\coarse,E}({W|_E})}{L^2(T\cup T')}^2\le \C{lift} |T\cup T'|^{1/d}\, \norm{{W}}{L^2(E)}^2$.
\bf\item\normalfont\label{O:grad}
\textbf{$\boldsymbol{H^1}$-control.}
For all  ${W} \in \mathcal{P}(\Omega)$, it holds that $\norm{\nabla L_{\coarse,E}({W|_E})}{L^2(T\cup T')}^2\le  \C{lift} |T\cup T'|^{-1/d} \,\norm{{W}}{L^2(E)}^2$.
\end{enumerate}

The properties can be proved along the lines of \cite[Section~5.11--5.12]{igafem}, where they are proved for polynomials on hierarchical meshes; see also \cite[Section~4.5.11--4.5.12]{diss} for details.
The proofs rely on standard scaling arguments and the existence of a suitable bubble function.
The involved constants thus depend only on $d$, $\C{\gamma}$, and $( q_1,\dots, q_d)$.

\section{Possible Generalizations}\label{sec:generalizations}
In this section, we briefly discuss several easy generalizations of Theorem~\ref{thm:main}.
We note that all following generalizations are compatible with each other, i.e., Theorem~\ref{thm:main} holds analogously for rational T-splines in arbitrary dimension $d\ge2$ on 
geometries $\Omega$ that are initially non-uniformly meshed if one uses arbitrarily graded mesh-refinement.
If $d=2$, one can even employ rational T-splines of arbitrary degree $p_1,p_2\ge2$. 

\subsection{Rational T-splines}\label{subsec:rational}
Instead of the ansatz space $\XX_\bullet$, one can use rational hierarchical splines, i.e., 
\begin{align}
\XX_\bullet^{W_0}:=\Big\{\frac{V_\bullet}{W_0}:V_\bullet\in\XX_\bullet\Big\},
\end{align}
where $W_0\in\YY_0$ with $W_0>0$  is a fixed positive weight function.
In this case, the corresponding basis  consists of NURBS instead of B-splines.
Indeed, the mesh properties \eqref{M:shape}--\eqref{M:dual}, the refinement properties \eqref{R:sons}--\eqref{R:overlay}, and the oscillation properties \eqref{O:inverse}--\eqref{O:grad}  from Section~\ref{sec:proof} 
are independent of the discrete spaces.
To verify the validity of Theorem~\ref{thm:main} in the NURBS setting, it thus only remains to verify the properties \eqref{S:inverse}--\eqref{S:grad} for the NURBS finite element spaces.
The inverse estimate \eqref{S:inverse} follows similarly as in Section ~\ref{subsec:ansatz} since we only consider a fixed and thus uniformly bounded  weight function $W_0\in\YY_0$.
The properties \eqref{S:nestedness}--\eqref{S:local} depend only on the numerator of the NURBS functions and thus transfer.
To see \eqref{S:proj}--\eqref{S:grad}, one can proceed as in Section~\ref{subsec:ansatz}, where the corresponding Scott--Zhang-type operator $J_\bullet^{W_0}:L^2(\Omega)\to \XX_\bullet^{W_0}$ now  reads 
$J_\bullet^{W_0}v:={J_\bullet(vW_0)}/{W_0}$ {for all }$v\in L^2(\Omega)$.
Overall,  the involved constants then depend additionally on $W_0$.

\subsection{Non-uniform initial mesh}\label{subsec:non-uniform}
By definition, $\widehat\TT_0$ is a uniform tensor-mesh. 
Instead one can also allow for non-uniform  tensor-meshes 
\begin{align}\label{eq:initial mesh}
\widehat\TT_0=\Big\{\prod_{i=1}^d[a_{i,j},a_{i,j+1}]:i\in\{1,\dots,d\}\wedge j\in\{0,\dots,N_i-1\}\Big\}, 
\end{align}
where $(a_{i,j})_{j=0}^{N_i}$ is a strictly increasing vector with $a_{i,0}=0$ and $a_{i,N_i}=N_i$, and adapt the corresponding definitions accordingly.
In particular, for the refinement, the definition \eqref{eq:neighbors} of neighbors of an element has to be adapted and depends on $\widehat\TT_0$.
To circumvent this problem, one can transform the non-uniform mesh via some \new{function} $\varphi$ to a uniform one, perform the refinement there, and then transform the refined mesh back via $\varphi^{-1}$. 
Indeed, for each $i\in\{1,\dots,d\}$, there exists a continuous strictly monotonously increasing function $\varphi_i:[0,N_i]\to[0,N_i]$ that affinely maps any interval $[a_{i,j},a_{i,j+1}]$ to $[j,j+1]$. 
Then, the resulting tensor-product $\varphi:=\varphi_1\otimes\dots\otimes\varphi_d:\overline{\widehat\Omega}\to\overline{\widehat\Omega}$ defined as in~\eqref{eq:Bz} is a bijection. 
To prove  the mesh properties \eqref{M:shape}--\eqref{M:dual} and the refinement properties \eqref{R:sons}--\eqref{R:overlay}, one first verifies them on transformed meshes $\set{\varphi(\widehat T)}{\widehat T\in\widehat\TT_\coarse}$ as in Section~\ref{subsec:M true}--\ref{sec:refinement props}, and then transforms these results via $\gamma\circ\varphi^{-1}$ to physical meshes $\TT_\coarse$. 
The space properties \eqref{S:inverse}--\eqref{S:grad} and the oscillation properties \eqref{O:inverse}--\eqref{O:grad}  follow as in Section~\ref{subsec:ansatz}--\ref{sec:oscproof}. 
\new{All} involved constants depend additionally on $\widehat\TT_0$.


\subsection{Arbitrary grading} 
Instead of dividing the refined elements into two sons, one can also divide them into $m$ sons, where $m\ge2$ is a fixed integer.
Indeed, such a grading parameter $n$ has already been proposed and analyzed in \cite{morgensternT2} to obtain a more localized refinement strategy.
The proofs hold verbatim, but the constants depend additionally on $m$.

\subsection{Arbitrary dimension $d\ge2$}
\cite[Section~5.4~and~5.5]{diss_morgenstern} generalizes T-meshes, T-splines, and the refinement strategy developed in \cite{morgensternT2} for $d=3$ to arbitrary $d\ge2$. 
We note that the resulting refinement for $d=2$ does not coincide with the refinement from \cite{morgensternT1} that we consider in this work.
Instead, the latter leads to a smaller mesh closure. 
However, Theorem~\ref{thm:main} is still valid if the refinement strategy from \cite[Section~5.4~and~5.5]{diss_morgenstern} is used for $d\ge2$. 
Indeed, the mesh properties \eqref{M:shape}--\eqref{M:dual}  essentially follow from \eqref{eq:shape regular parameter1}--\eqref{eq:shape regular parameter2}, which \new{are} stated in \cite[Lemma~5.4.10]{diss_morgenstern}.
The properties \eqref{R:sons}--\eqref{R:reduction} are satisfied by definition, \eqref{R:closure} is proved in \cite[Section~5.4.2]{diss_morgenstern}, and \eqref{R:overlay} follows along the lines of \cite[Section~5]{morgensternT1}.
The space properties~\eqref{S:inverse} and \eqref{S:local}--\eqref{S:grad} can be verified as in Section~\ref{subsec:ansatz}, where the required dual-compatability is found in \cite[Theorem~5.3.14~and~5.4.11]{diss_morgenstern}. 
Nestedness~\eqref{S:nestedness} is proved in \cite[Theorem~5.4.12]{diss_morgenstern}.
The oscillation properties \eqref{O:inverse}--\eqref{O:grad} follow as in Section~\ref{sec:oscproof}.

\subsection{Arbitrary polynomial degrees $(p_1,\dots,p_d)$ for $d=2$}
In \cite{beirao}, T-splines of arbitrary degree have been analyzed for $d=2$. 
Depending on the degrees $p_1,p_2\ge2$, the corresponding basis functions are associated with elements, element edges, or, as in our case, with nodes.
We only restricted to odd degrees for the sake of readability. 
Indeed, the work \cite{morgensternT1} allows for arbitrary $p_1,p_2\ge2$.
In particular, all cited results of \cite{morgensternT1} are also valid in this case, and Theorem~\ref{thm:main} follows along the lines of Section~\ref{sec:proof}.
However, to the best of our knowledge, T-splines of arbitrary degree have not been investigated for $d>2$.

\section*{Acknowledgement} The authors acknowledge support through the Austrian Science Fund (FWF) under grant P29096 
and grant W1245.

\bibliographystyle{alpha}
\bibliography{literature}

\newcommand{\etalchar}[1]{$^{#1}$}
\begin{thebibliography}{KVVdZvB14}

\bibitem[AO00]{ainsworth-oden}
Mark Ainsworth and J.~Tinsley Oden.
\newblock {\em A posteriori error estimation in finite element analysis}.
\newblock Pure and Applied Mathematics (New York). John Wiley \& Sons, New
  York, 2000.

\bibitem[BBdVC{\etalchar{+}}06]{approximation}
Yuri Bazilevs, Lourenco Beir{\~a}o~da Veiga, J.~Austin Cottrell, Thomas J.~R.
  Hughes, and Giancarlo Sangalli.
\newblock Isogeometric analysis: approximation, stability and error estimates
  for h-refined meshes.
\newblock {\em Math. Models Methods Appl. Sci.}, 16(07):1031--1090, 2006.

\bibitem[BBGV19]{bbgv19}
Cesare Bracco, Annalisa Buffa, Carlotta Giannelli, and Rafael V{\'a}zquez.
\newblock Adaptive isogeometric methods with hierarchical splines: An overview.
\newblock {\em Discrete Contin. Dyn. Syst.}, 39(1):241--261, 2019.

\bibitem[BCS10]{counter}
Annalisa Buffa, Durkbin Cho, and Giancarlo Sangalli.
\newblock Linear independence of the {T}-spline blending functions associated
  with some particular {T}-meshes.
\newblock {\em Comput. Methods Appl. Mech. Engrg.}, 199(23-24):1437--1445,
  2010.

\bibitem[BDD04]{bdd}
Peter Binev, Wolfgang Dahmen, and Ron DeVore.
\newblock Adaptive finite element methods with convergence rates.
\newblock {\em Numer. Math.}, 97(2):219--268, 2004.

\bibitem[BdVBSV13]{beirao}
Lourenco Beir{\~a}o~da Veiga, Annalisa Buffa, Giancarlo Sangalli, and Rafael
  V{\'a}zquez.
\newblock Analysis-suitable {T}-splines of arbitrary degree: definition, linear
  independence and approximation properties.
\newblock {\em Math. Models Methods Appl. Sci.}, 23(11):1979--2003, 2013.

\bibitem[BdVBSV14]{variational}
Lourenco Beir{\~a}o~da Veiga, Annalisa Buffa, Giancarlo Sangalli, and Rafael
  V{\'a}zquez.
\newblock Mathematical analysis of variational isogeometric methods.
\newblock {\em Acta Numer.}, 23:157--287, 2014.

\bibitem[BG16]{bg}
Annalisa Buffa and Carlotta Giannelli.
\newblock Adaptive isogeometric methods with hierarchical splines: error
  estimator and convergence.
\newblock {\em Math. Models Methods Appl. Sci.}, 26(01):1--25, 2016.

\bibitem[BG17]{bgcopy}
Annalisa Buffa and Carlotta Giannelli.
\newblock Adaptive isogeometric methods with hierarchical splines: {O}ptimality
  and convergence rates.
\newblock {\em Math. Models Methods Appl. Sci.}, 27(14):2781--2802, 2017.

\bibitem[BG18]{garau16}
Annalisa Buffa and Eduardo~M. Garau.
\newblock A posteriori error estimators for hierarchical {B}-spline
  discretizations.
\newblock {\em Math. Models Methods Appl. Sci.}, 28(8):1453--1480, 2018.

\bibitem[BHP17]{helmholtz}
Alex Bespalov, Alexander Haberl, and Dirk Praetorius.
\newblock Adaptive {FEM} with coarse initial mesh guarantees optimal
  convergence rates for compactly perturbed elliptic problems.
\newblock {\em Comput. Methods Appl. Mech. Engrg.}, 317:318--340, 2017.

\bibitem[CFPP14]{axioms}
Carsten Carstensen, Michael Feischl, Marcus Page, and Dirk Praetorius.
\newblock Axioms of adaptivity.
\newblock {\em Comput. Math. Appl.}, 67(6):1195--1253, 2014.

\bibitem[CHB09]{bible}
J.~Austin Cottrell, Thomas J.~R. Hughes, and Yuri Bazilevs.
\newblock {\em Isogeometric analysis: toward integration of {C}{A}{D} and
  {F}{E}{A}}.
\newblock John Wiley \& Sons, New York, 2009.

\bibitem[CKNS08]{ckns}
J.~Manuel Cascon, Christian Kreuzer, Ricardo~H. Nochetto, and Kunibert~G.
  Siebert.
\newblock Quasi-optimal convergence rate for an adaptive finite element method.
\newblock {\em SIAM J. Numer. Anal.}, 46(5):2524--2550, 2008.

\bibitem[CV20]{cv20}
Durkbin Cho and Rafael V{\'a}zquez.
\newblock {BPX} preconditioners for isogeometric analysis using
  analysis-suitable {T}-splines.
\newblock {\em \new{IMA J. Numer. Anal.}}, \new{40}(\new{1}):\new{764--799},
  \new{2020}.

\bibitem[dB01]{boor}
Carl de~Boor.
\newblock {\em A practical guide to splines}.
\newblock Springer, New York, 2001.

\bibitem[DJS10]{djs10}
Michael~R. D{\"o}rfel, Bert J{\"u}ttler, and Bernd Simeon.
\newblock Adaptive isogeometric analysis by local h-refinement with
  {T}-splines.
\newblock {\em Comput. Methods Appl. Mech. Engrg.}, 199(5-8):264--275, 2010.

\bibitem[DLP13]{lr}
Tor Dokken, Tom Lyche, and Kjell~F. Pettersen.
\newblock Polynomial splines over locally refined box-partitions.
\newblock {\em Comput. Aided Geom. Design}, 30(3):331--356, 2013.

\bibitem[D{\"o}r96]{doerfler}
Willy D{\"o}rfler.
\newblock A convergent adaptive algorithm for {P}oisson's equation.
\newblock {\em SIAM J. Numer. Anal.}, 33(3):1106--1124, 1996.

\bibitem[FFP14]{ffp14}
Michael Feischl, Thomas F{\"u}hrer, and Dirk Praetorius.
\newblock Adaptive {FEM} with optimal convergence rates for a certain class of
  nonsymmetric and possibly nonlinear problems.
\newblock {\em SIAM J. Numer. Anal.}, 52(2):601--625, 2014.

\bibitem[FGHP16]{resigabem}
Michael Feischl, Gregor Gantner, Alexander Haberl, and Dirk Praetorius.
\newblock Adaptive 2{D} {I}{G}{A} boundary element methods.
\newblock {\em Eng. Anal. Bound. Elem.}, 62:141--153, 2016.

\bibitem[FGHP17]{resigaconv}
Michael Feischl, Gregor Gantner, Alexander Haberl, and Dirk Praetorius.
\newblock Optimal convergence for adaptive {IGA} boundary element methods for
  weakly-singular integral equations.
\newblock {\em Numer. Math.}, 136(1):147--182, 2017.

\bibitem[FGP15]{fgp}
Michael Feischl, Gregor Gantner, and Dirk Praetorius.
\newblock Reliable and efficient a posteriori error estimation for adaptive
  {I}{G}{A} boundary element methods for weakly-singular integral equations.
\newblock {\em Comput. Methods Appl. Mech. Engrg.}, 290:362--386, 2015.

\bibitem[FHPS19]{fhps19}
Thomas F{\"u}hrer, Alexander Haberl, Dirk Praetorius, and Stefan Schimanko.
\newblock Adaptive {BEM} with inexact {PCG} solver yields almost optimal
  computational costs.
\newblock {\em Numer. Math.}, 141:967--1008, 2019.

\bibitem[Gan17]{diss}
Gregor Gantner.
\newblock {\em Optimal adaptivity for splines in finite and boundary element
  methods}.
\newblock PhD thesis, Institute for Analysis and Scientific Computing, TU Wien,
  2017.

\bibitem[GHP17]{igafem}
Gregor Gantner, Daniel Haberlik, and Dirk Praetorius.
\newblock Adaptive {IGAFEM} with optimal convergence rates: {H}ierarchical
  {B}-splines.
\newblock {\em Math. Models Methods Appl. Sci.}, 27(14):2631--2674, 2017.

\bibitem[GJS12]{juttler2}
Carlotta Giannelli, Bert J{\"u}ttler, and Hendrik Speleers.
\newblock {T}{H}{B}-splines: The truncated basis for hierarchical splines.
\newblock {\em Comput. Aided Geom. Design}, 29(7):485--498, 2012.

\bibitem[GM08]{morin}
Fernando~D. Gaspoz and Pedro Morin.
\newblock Convergence rates for adaptive finite elements.
\newblock {\em IMA J. Numer. Anal.}, 29(4):917--936, 2008.

\bibitem[GPS19a]{gps19}
Gregor Gantner, Dirk Praetorius, and Stefan Schimanko.
\newblock Adaptive isogeometric boundary element methods with local smoothness
  control.
\newblock {\em arXiv preprint}, 1903.01830, 2019.

\bibitem[GPS19b]{gps19+}
Gregor Gantner, Dirk Praetorius, and Stefan Schimanko.
\newblock Rate optimal adaptive {FEM} with inexact solver for nonlinear
  operators.
\newblock {\em Submitted for publication}, 2019.

\bibitem[GSS14]{gss14}
Dietmar Gallistl, Mira Schedensack, and Rob~P. Stevenson.
\newblock A remark on newest vertex bisection in any space dimension.
\newblock {\em Computational methods in applied mathematics}, 14(3):317--320,
  2014.

\bibitem[GvL12]{gl12}
Gene~H. Golub and Charles~F. van Loan.
\newblock {\em Matrix computations}, volume~4.
\newblock Johns Hopkins University Press, Baltimore, 2012.

\bibitem[HCB05]{pioneer}
Thomas J.~R. Hughes, J.~Austin Cottrell, and Yuri Bazilevs.
\newblock Isogeometric analysis: {C}{A}{D}, finite elements, {N}{U}{R}{B}{S},
  exact geometry and mesh refinement.
\newblock {\em Comput. Methods Appl. Mech. Engrg.}, 194(39):4135--4195, 2005.

\bibitem[HKMP17]{comparison2}
Paul Hennig, Markus K{\"a}stner, Philipp Morgenstern, and Daniel Peterseim.
\newblock Adaptive mesh refinement strategies in isogeometric analysis---{A}
  computational comparison.
\newblock {\em Comput. Methods Appl. Mech. Engrg.}, 316:424--448, 2017.

\bibitem[JKD14]{dokken}
Kjetil~A. Johannessen, Trond Kvamsdal, and Tor Dokken.
\newblock Isogeometric analysis using {L}{R} {B}-splines.
\newblock {\em Comput. Methods Appl. Mech. Engrg.}, 269:471--514, 2014.

\bibitem[JRK15]{comparison1}
Kjetil~A. Johannessen, Filippo Remonato, and Trond Kvamsdal.
\newblock On the similarities and differences between classical hierarchical,
  truncated hierarchical and {L}{R} {B}-splines.
\newblock {\em Comput. Methods Appl. Mech. Engrg.}, 291:64--101, 2015.

\bibitem[KVVdZvB14]{vanderzee}
Gokturk Kuru, Clemens~V. Verhoosel, Kristoffer~G. Van~der Zee, and E.~Harald
  van Brummelen.
\newblock Goal-adaptive isogeometric analysis with hierarchical splines.
\newblock {\em Comput. Methods Appl. Mech. Engrg.}, 270:270--292, 2014.

\bibitem[LS14]{li2014analysis}
Xin Li and Michael~A Scott.
\newblock Analysis-suitable {T}-splines: characterization, refineability, and
  approximation.
\newblock {\em Mathematical Models and Methods in Applied Sciences},
  24(06):1141--1164, 2014.

\bibitem[MNS00]{mns00}
Pedro Morin, Ricardo~H. Nochetto, and Kunibert~G. Siebert.
\newblock Data oscillation and convergence of adaptive {FEM}.
\newblock {\em SIAM J. Numer. Anal.}, 38(2):466--488, 2000.

\bibitem[Mor16]{morgensternT2}
Philipp Morgenstern.
\newblock Globally structured three-dimensional analysis-suitable {T}-splines:
  Definition, linear independence and $m$-graded local refinement.
\newblock {\em SIAM J. Numer. Anal.}, 54(4):2163--2186, 2016.

\bibitem[Mor17]{diss_morgenstern}
Philipp Morgenstern.
\newblock {\em Mesh refinement strategies for the adaptive isogeometric
  method}.
\newblock PhD thesis, University of Bonn, 2017.

\bibitem[MP15]{morgensternT1}
Philipp Morgenstern and Daniel Peterseim.
\newblock Analysis-suitable adaptive {T}-mesh refinement with linear
  complexity.
\newblock {\em Comput. Aided Geom. Design}, 34:50--66, 2015.

\bibitem[NV12]{nv}
Ricardo~H. Nochetto and Andreas Veeser.
\newblock {Primer of adaptive finite element methods.}
\newblock In: Multiscale and adaptivity: Modeling, numerics and applications.
  Lecture Notes in Mathematics, 2040:125--225, Springer, Berlin, Heidelberg,
  2012.

\bibitem[SLSH12]{scott}
Michael~A. Scott, Xin Li, Thomas~W. Sederberg, and Thomas J.~R. Hughes.
\newblock Local refinement of analysis-suitable {T}-splines.
\newblock {\em Comput. Methods Appl. Mech. Engrg.}, 213:206--222, 2012.

\bibitem[Ste07]{stevenson}
Rob Stevenson.
\newblock Optimality of a standard adaptive finite element method.
\newblock {\em Found. Comput. Math.}, 7(2):245--269, 2007.

\bibitem[SZBN03]{szbn03}
Thomas~W. Sederberg, Jianmin Zheng, Almaz Bakenov, and Ahmad Nasri.
\newblock T-splines and {T}-{NURCC}s.
\newblock {\em ACM Trans. Graph.}, 22(3):477--484, 2003.

\bibitem[Ver13]{verfuerth}
R{\"u}diger Verf{\"u}rth.
\newblock {\em A posteriori error estimation techniques for finite element
  methods}.
\newblock Oxford University Press, Oxford, 2013.

\bibitem[VGJS11]{juttler}
Anh-Vu Vuong, Carlotta Giannelli, Bert J{\"u}ttler, and Bernd Simeon.
\newblock A hierarchical approach to adaptive local refinement in isogeometric
  analysis.
\newblock {\em Comput. Methods Appl. Mech. Engrg.}, 200(49):3554--3567, 2011.

\end{thebibliography}

\end{document}